\providecommand{\U}[1]{\protect\rule{.1in}{.1in}}
\newtheorem{theorem}{Theorem}
\newtheorem*{theorem*}{Theorem}
\newtheorem*{conjecture*}{Conjecture}
\newtheorem*{belief*}{General Belief}
\newtheorem{corollary}[theorem]{Corollary}
\newtheorem*{definition}{Definition}
\newtheorem{lemma}[theorem]{Lemma}
\newtheorem*{notation}{Notation}
\newtheorem{proposition}[theorem]{Proposition}
\newtheorem{bigtheorem}{Theorem}
\renewenvironment{proof}[1][Proof]{\noindent\textbf{#1.} }{\ \rule{0.5em}{0.5em}}
\newenvironment{acknowledgements}[1][Acknowledgements]{\noindent\textbf{#1.} }{\ }
\newcommand{\Z}{\mathbb{Z}}
\newcommand{\R}{\mathbb{R}}
\newcommand{\N}{\mathbb{N}}
\newcommand{\Q}{\mathbb{Q}}
\newcommand{\Orb}{\mathcal{O}}
\newcommand{\floor}[1]{\left\lfloor {#1}\right\rfloor}
\newcommand{\ceil}[1]{\left\lceil {#1}\right\rceil}
\newcommand{\Rf}{\mathbf{R}_{\lfloor\cdot\rfloor}}
\newcommand{\Rc}{\mathbf{R}_{\lceil\cdot\rceil}}
\renewcommand{\mod}{\ \mathrm{mod}\ }
\newcommand{\T}{\mathcal{T}}
\newcommand{\proj}{\mathrm{proj}}
\newcommand{\id}{\mathrm{id}}
\newcommand{\poly}{\mathrm{polynomial}}
\begin{document}

\title{\textbf{A Minimal Subsystem of the Kari-Culik Tilings}}
\author{Jason Siefken\footnote{Supported by the University of Victoria and NSERC}\\
 \small Department of Mathematics and Statistics\\
 \vspace{-.4em}
 \small University of Victoria\\
 \vspace{-.4em}
 \small P. O. Box 3045\\
 \vspace{-.4em}
 \small Victoria, BC \\
 \vspace{-.4em}
 \small \textsc{Canada} V8P 5C2\\
 \small siefkenj@uvic.ca}
\date{\today}
\maketitle

\begin{abstract}
	The Kari-Culik tilings are formed from a set of 13 Wang tiles that tile
	the plane only aperiodically.  They are the smallest known set of Wang tiles
	to do so and are not as well understood as other examples of aperiodic Wang tiles.
	We show that the $\Z^2$ action by translation on 
	a certain subset of the Kari-Culik tilings, namely those
	whose rows can be interpreted as Sturmian sequences (rotation sequences), is minimal.
	We give a characterization of this space as a skew product as well as 
	explicit bounds on the waiting time between occurrences of $m\times n$
	configurations.
	\let\thefootnote\relax\footnotetext{\emph{2000 Mathematics Subject Classification.} 
	Primary: 52C20, 05B45, 5223.  Secondary: 37E05.}
	\let\thefootnote\relax\footnotetext{\emph{Key words:} aperiodic tiling, Wang tiles, aperiodic SFT, Kari-Culik tiling. }
\end{abstract}

\section{Introduction}

	The Kari-Culik tilings are tilings of the plane by the
	13 square tiles specified in Figure \ref{FigKCTiles}.
	Translations, but not rotations, of tiles are allowed, and
	two tiles may share an edge if the 
	labels of those edges match.

	\begin{figure}[h!]
		\footnotesize
		\begin{center}
		\includegraphics[width=5in]{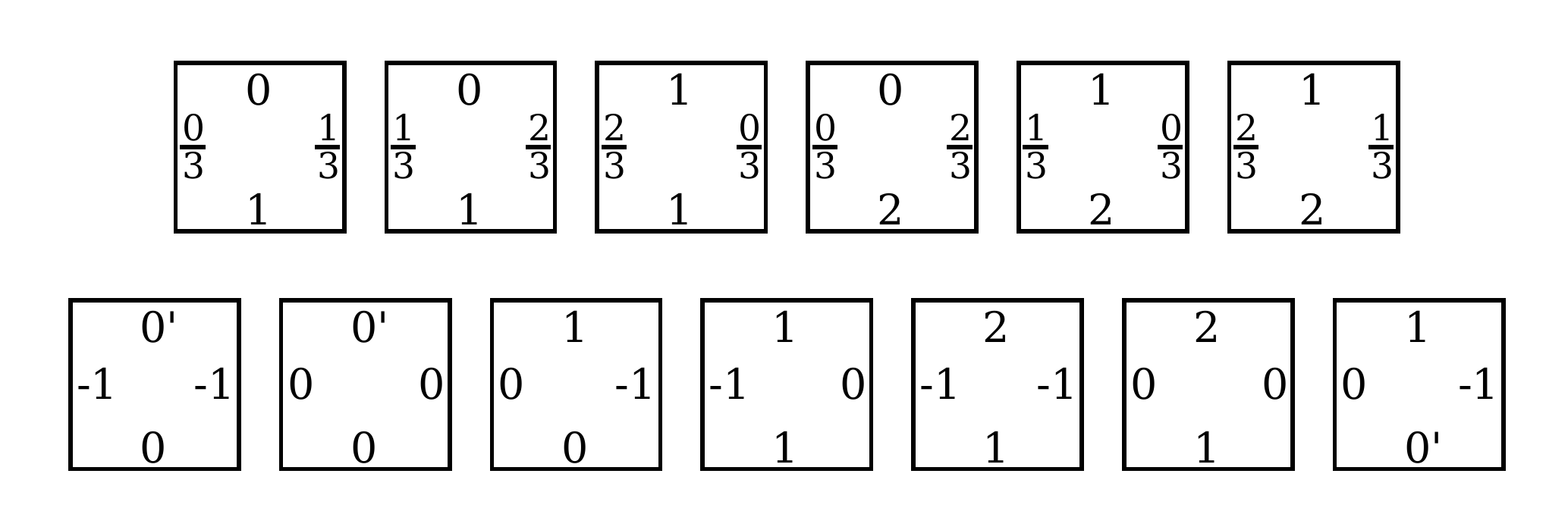}
		\end{center}
		\caption{ \footnotesize List of the 13 Kari-Culik tiles.}
		\label{FigKCTiles}
	\end{figure}

	Published in 1995, they are the smallest known set
	of square tiles admitting only aperiodic tilings of the plane.  Further,
	unlike the Robinson tilings and other well-known aperiodic tilings, the 
	current proofs of aperiodicity
	for the Kari-Culik tilings are based on number-theoretic arguments
	with no known hierarchical explanation.  

	Eigen, Navarro, and Prasad provide a detailed exposition and proof of the
	aperiodicity and existence of Kari-Culik tilings \cite{eigen}.  In their proof,
	Eigen et al\mbox{.} show the existence of Kari-Culik tilings arising from 
	Sturmian sequences (defined in the next section).
	It was unknown if there were Kari-Culik configurations not arising from Sturmian
	sequences until Durand, Gamard, and Grandjean  showed that the set of all Kari-Culik
	tilings has positive topological entropy \cite{durand}.  Since the 
	number of Sturmian sequences of length $n$ is bounded by a polynomial $p(n)$ \cite{berenstein}, 
	the fact that $n\log p(n)/n^2\to 0$ shows that the
	subset of
	tilings arising from Sturmian sequences must have zero-entropy. This implies the
	existence of non-Sturmian Kari-Culik tilings.

	In this paper, however, we will only concern ourselves with a subset
	of the Kari-Culik tilings.  Let $KC$ be the subset of the Kari-Culik tilings
	whose rows form (generalized) Sturmian sequences.

	\begin{bigtheorem}\label{ThmKCConjSkew}
		The $\Z^2$ action by translation on $KC$ is conjugate to
		a skew product acting on the space $[1/3,2]\times \varprojlim \R/(6^n\Z)$.
	\end{bigtheorem}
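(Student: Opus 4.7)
The plan is to build an explicit conjugacy $\Phi: KC \to [1/3, 2] \times \varprojlim \R/(6^n\Z)$. Given $t \in KC$, let $\alpha$ be the slope of its central row, which is a generalized Sturmian sequence. The Kari-Culik rules (as analyzed in \cite{eigen}) force $\alpha \in [1/3,2]$ and give a piecewise-affine transfer map $M:[1/3,2] \to [1/3,2]$ such that the row immediately above has slope $M(\alpha)$; this map is multiplication by either $1/2$ or $2/3$ on complementary subintervals, so once $\alpha$ is known, the slope of every row of $t$ is determined. Sturmian phase behaves analogously: one direction (say downward) is deterministic, with the phase of each new row equal to a fixed integer multiple (either $2$ or $3$) of the phase above modulo $1$, while the opposite direction involves a choice of preimage. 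I will encode this by taking $\beta_n \in \R/(6^n\Z)$ to be the lift of the central row's phase $\beta_0 \in \R/\Z$ determined by the first $n$ rows looked at in the ambiguous direction; at each additional row, the ambiguity grows by a factor of $2$ or $3$, and $6^n$ is the coarsest common bound, giving the compatibility $\beta_{n+1} \equiv \beta_n \pmod{6^n\Z}$. Define $\Phi(t) = (\alpha, (\beta_n))$.

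The next steps are to show $\Phi$ is a homeomorphism and that it intertwines the $\Z^2$ translation action with a skew product. Injectivity is clear because $(\alpha, \beta_0)$ determines the central row and hence the deterministic half-plane, while the tower $(\beta_n)_{n \geq 1}$ encodes the choices needed to build the other half-plane. Surjectivity amounts to showing every compatible tower is realized by a legitimate tiling, which I anticipate can be done constructively, extending the Eigen-Navarro-Prasad approach row by row. For equivariance, the horizontal generator conjugates to $(\alpha,\beta) \mapsto (\alpha, \beta + \alpha)$, a rotation of the phase by the slope (using the natural embedding $\R \hookrightarrow \varprojlim \R/(6^n\Z)$), and the vertical generator conjugates to $(\alpha,\beta) \mapsto (M(\alpha), \mu(\alpha)\cdot\beta)$, where $\mu(\alpha) \in \{1/2, 2/3\}$ is the slope multiplier; because $\mu$ depends on $\alpha$, the vertical action is a skew product over the base map $M$ acting on $[1/3,2]$.

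The main obstacle lies in surjectivity and in verifying that the phase data really lives in the $6$-adic solenoid. The subtle step is showing that the carry labels on the Kari-Culik tiles realize precisely the preimage ambiguities ($2$-fold or $3$-fold, depending on the regime), and that coherent choices across every finite truncation lift to a genuine tiling. A secondary point is the handling of the countably many rational slopes, where Sturmian sequences become genuinely ambiguous and the transfer-map image may land on a boundary of the $1/2$- and $2/3$-regimes; these are exactly the points where ``generalized'' must be invoked, and care is needed to show that $\Phi$ remains a bijection there. Once these coherence and boundary issues are handled, the inverse-limit structure of $\varprojlim \R/(6^n\Z)$ falls out as the profinite completion dictated by exactly the $2$-and-$3$ preimage ambiguities, and the skew product form is read directly from the slope-dependent transfer rules.
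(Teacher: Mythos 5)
Your overall strategy---parameterize a tiling by the angle of one row together with a tower of phase lifts, and read the $\Z^2$ action off as a skew product---is the same as the paper's, but there are two genuine gaps in the execution. The first concerns the structure of the phase ambiguity. The vertical transfer rule is $t_{i+1}=2t_i\mod 1$ when the multiplier is $2$ and $t_i=3t_{i+1}\mod 1$ when the multiplier is $1/3$ (the multipliers are $2$ and $1/3$, not $1/2$ and $2/3$). So neither vertical direction is deterministic: going up one must divide by $3$ (a $3$-fold choice) at the rows of multiplier $1/3$, and going down one must divide by $2$ (a $2$-fold choice) at the rows of multiplier $2$, and these two kinds of rows are interleaved along the $f$-orbit of $\alpha$. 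Consequently ``the lift determined by the first $n$ rows in the ambiguous direction'' does not satisfy $\beta_{n+1}\equiv\beta_n \pmod{6^n\Z}$: one direction yields residues modulo powers of $2$ and the other modulo powers of $3$, accumulating at rates governed by $\log 2/\log 6$ and $\log 3/\log 6$ rather than one full level of $\varprojlim\R/(6^n\Z)$ per row. Assembling a single coherent element of $\T$ from these two interleaved towers is the real content of Lemma \ref{PropWExists}, which constructs $z^{(3)}\in\varprojlim\R/(3^n\Z)$ and $z^{(2)}\in\varprojlim\R/(2^n\Z)$ separately and glues them with the Chinese Remainder Theorem; your proposal has no substitute for this step. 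Your injectivity claim (``$(\alpha,\beta_0)$ determines the deterministic half-plane'') fails for the same reason.

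The second gap is the floor/ceiling coherence problem, which the paper identifies as the bulk of Proposition \ref{PropBasicConstructRepr}. A Sturmian row is $\Rf(\alpha,t)$ or $\Rc(\alpha,t)$, and these differ (by a transposition of adjacent symbols) exactly when some $n\alpha+t$ is an integer; a priori a single tiling could have some rows requiring $\Rf$ and others requiring $\Rc$, in which case the pair (angle, phase tower) would not determine the tiling and neither injectivity nor your row-by-row surjectivity argument would close up. Ruling this out requires an argument with the actual tiles (the enumeration of ``misaligned straddle words''), and the surviving global choice contributes an extra factor $\{\Rf,\Rc\}$ to the parameterization that your target space omits. Relatedly, passing from a tiling to its rows of bottom labels is only one-to-one away from a small set of rational angles (Theorem \ref{PropPhiOneToOne}), so the conjugacy really lives on $KC_{\Q^c}$; your final paragraph gestures at the rational-angle issue but does not resolve it.
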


	\begin{bigtheorem}\label{ThmKCMiminal}
		The $\Z^2$ action by translation on $KC$ is minimal.
	\end{bigtheorem}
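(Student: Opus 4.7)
By Theorem A the problem reduces to showing that the skew product $\Z^2$-action on $X := [1/3, 2] \times \varprojlim \R/(6^n\Z)$ is minimal. Denote points of $X$ by $(\alpha, \xi)$, where $\alpha$ is the common rotation parameter of the tiling's rows and $\xi$ is the $6$-adic phase at which a row is read. The horizontal generator should act trivially on $\alpha$ and as a rotation on the solenoid by the diagonal image of $\alpha$, while the vertical generator acts on $\alpha$ via the Kari map $f$ (multiplication by $2$ or by $2/3$, chosen so as to remain in $[1/3, 2]$) and on the solenoid fiber by the corresponding multiplication. Minimality will then follow from two facts: (A) every forward/backward $f$-orbit arising from a tiling of $KC$ is dense in $[1/3, 2]$, and (B) for every such $\alpha$, the horizontal action on $\{\alpha\}\times\varprojlim\R/(6^n\Z)$ is minimal.

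For (A), I will make the substitution $y = \log \alpha$, which identifies $[1/3, 2]$ with the interval $[-\log 3, \log 2]$ of length $\log 6$. Under either Kari step the coordinate $y$ shifts by $\log 2$ or by $\log 2 - \log 3$, and since $\log 3 \equiv -\log 2 \pmod{\log 6}$ both shifts lie in the subgroup of the circle $\R/(\log 6)\Z$ generated by $\log 2$. Irrationality of $\log_6 2$ makes this subgroup dense, so the bi-infinite $f$-orbit inside any tiling from $KC$ projects to a dense subset of that circle, and hence to a dense subset of $[1/3, 2]$. For (B), every $\alpha$ appearing in a row of a tiling in $KC$ is irrational (otherwise that row would be periodic, violating aperiodicity of the Kari-Culik set), and an irrational real embeds in the compact abelian solenoid $\varprojlim\R/(6^n\Z)$ as a generator of a dense subgroup; a Kronecker-type argument then gives minimality of the horizontal rotation on each fiber.

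To assemble these pieces, given $(\alpha_1, \xi_1), (\alpha_2, \xi_2) \in X$ and $\epsilon > 0$, I will first apply vertical translations to bring $\alpha_1$ within $\epsilon$ of $\alpha_2$ using (A), then apply horizontal translations (which preserve $\alpha$) to bring the fiber coordinate within $\epsilon$ of $\xi_2$ using (B). The main obstacle will be the bookkeeping that links the two steps: vertical translations simultaneously perturb $\xi$, so the subsequent horizontal adjustment must use the updated $\alpha$-value, and I must verify that continuity of the skew cocycle keeps the first coordinate near $\alpha_2$ throughout. A parallel route, indicated by the waiting-time bounds promised in the abstract, would be to prove uniform recurrence of $m \times n$ patterns directly — combining the linear factor complexity of Sturmian sequences (horizontal gaps) with a three-distance-theorem estimate for the rotation by $\log_6 2$ on $\R/(\log 6)\Z$ (vertical gaps) — which would both imply minimality and yield the explicit $m \times n$ waiting times.
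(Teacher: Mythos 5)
Your outline of the parameter-space argument matches the paper's: minimality of the skew product follows from (A) density of $f$-orbits in $[1/3,2]$ (Liousse's conjugacy of $f$ with rotation by $\log 2/\log 6$) plus (B) minimality of the fiber rotation $t\mapsto t+\alpha$ on the solenoid for irrational $\alpha$, assembled exactly as you describe (this is Proposition \ref{PropParamSpaceMinimal}). But there are two genuine gaps. First, your claim in (B) that every row angle of a tiling in $KC$ is irrational "otherwise that row would be periodic, violating aperiodicity" is false: a single periodic row does not make the $\Z^2$-configuration periodic, and tilings with rational angle vectors (e.g.\ a row of bottom labels $\cdots 111\cdots$ with $\alpha=1$) genuinely occur in $KC$. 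The paper must therefore work with $KC_{\Q^c}$ and separately show that the orbit closure of an arbitrary point of $KC$ meets $KC_{\Q^c}$, and that the set $KC'$ on which $\Phi$ is injective is dense in $KC$ (a case analysis at $\alpha\in\{1/3,1/2,1,3/2\}$). Your proposal has no mechanism for these points.

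Second, and more seriously, you transfer minimality from the parameter space back to $KC$ by invoking Theorem \ref{ThmKCConjSkew} as if the identification were a topological conjugacy. It is not: the map $K:[1/3,2]\times\T\times\{\Rf,\Rc\}\to KC$ is built from floor/ceiling functions and is discontinuous on a (dense) set, and the conjugacy is only established on $KC_{\Q^c}$ in the first place. Minimality does not pass through a discontinuous bijection for free. The paper closes this gap with Proposition \ref{PropKCtsOnDenseSet} (the continuity points of $K$ form a dense residual set $G$ with $K(G)$ dense) and Lemma \ref{PropCtsOnDenseImpliesMinimal} (a surjective intertwining map whose continuity points are dense with dense image carries minimality forward); some substitute for this step is indispensable. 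A smaller point: your argument for (A) — that both logarithmic shifts lie in the dense subgroup generated by $\log 2$ — does not by itself give density of an orbit (an orbit inside a dense subgroup need not be dense); what saves you is that $-\log 3\equiv\log 2\pmod{\log 6}$, so every step of $f$ is the \emph{same} irrational rotation, which is the statement of Proposition \ref{PropfAperiodic}. Your alternative route via explicit waiting-time bounds is indeed viable and is essentially what Section \ref{SecExplicitBounds} of the paper carries out, but it faces the same two obstructions above when passing from the parameter space to actual tilings.
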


	\begin{bigtheorem}\label{ThmKCBounds}
		If $A$ is an $m\times n$ sub-configuration
		of some point in $KC$, every sub-configuration of size
		at least $6^{34.464m+25}n^{34.464}\times 6^{5m+3}n^4$ of a point
		in $KC$ contains a copy of $A$.
	\end{bigtheorem}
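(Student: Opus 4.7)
The plan is to use the skew product conjugacy of Theorem~\ref{ThmKCConjSkew} to translate ``$A$ occurs as a sub-configuration'' into ``the $\Z^2$-orbit of a point in $X := [1/3,2] \times \varprojlim \R/(6^n\Z)$ enters a certain cylinder set,'' and then to combine this with the qualitative minimality of Theorem~\ref{ThmKCMiminal} to make the return time explicit. An $m \times n$ sub-configuration $A$ of some point in $KC$ should correspond, under the conjugacy, to a cylinder $C_A = I \times J \subseteq X$, where $I \subseteq [1/3, 2]$ is an interval whose length is controlled by the $m$ vertical ``multiplier choices'' encoded by the rows of $A$, and $J$ is a cylinder in the solenoidal fibre whose depth is determined by $n$. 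Finding a copy of $A$ within a box of the tiling is then equivalent to showing that the $\Z^2$-orbit of any $x \in KC$ visits $C_A$ within that box, and the bound reduces to a uniform upper estimate on the return time to rectangles of this shape.

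The first step is the horizontal estimate. For each $\alpha \in [1/3, 2]$ the rows are (generalized) Sturmian sequences of a slope determined by $\alpha$, and the three-distance theorem---equivalently, the complexity bound coming from continued-fraction convergents---gives a polynomial waiting-time bound in $n$ for each factor of length $n$. Uniformizing this over the compact parameter range $[1/3, 2]$, after allowing the $n^4$ slack available in the second dimension to absorb the worst Diophantine irregularities, yields the exponents $n^{34.464}$ and $n^4$ appearing in the two dimensions. The second step is the vertical estimate: vertical translation acts as a skew over a piecewise-expanding Markov map on $[1/3, 2]$ whose branches are multiplication by $2$ and by $1/3$ (folded back into the fundamental domain), with skew factor on the fibre corresponding to the $\times 6$ map of the solenoid. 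Iterating $m$ times contracts intervals at rates bounded by $6^{5m+3}$ and $6^{34.464 m + 25}$ in the two directions, producing the $m$-dependence of the bound. Combining the two estimates through a covering argument on $X$ and invoking Theorem~\ref{ThmKCMiminal} to guarantee that $C_A$ is actually visited then yields the claimed rectangular bound.

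The main obstacle will be arithmetic rather than conceptual. First, isolating the explicit exponent $34.464$ requires a careful worst-case analysis of the Diophantine behaviour of rotation numbers in $[1/3, 2]$, along with explicit control of the iterated expansion rates of the skew map, and tracking these constants through every step of the horizontal-vertical combination is where most of the bookkeeping will live. Second, one must verify that the horizontal and vertical return-time bounds compose \emph{multiplicatively} rather than additively in the exponent---in other words, that a rectangular cylinder $I \times J$ is visited within a box whose side lengths are the products of the one-dimensional waiting times. This should follow from the fact, implicit in Theorem~\ref{ThmKCConjSkew}, that the skew product has uniformly mixing fibres over the Markov base, so that the horizontal waiting time is attained independently of the vertical coordinate on each branch of the base map.
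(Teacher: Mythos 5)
There is a genuine gap, and it sits exactly where you wave at ``uniformizing over the compact parameter range $[1/3,2]$.'' No uniform Diophantine bound over all rotation numbers $\alpha\in[1/3,2]$ exists: Liouville numbers are dense in that interval, so the horizontal waiting time for a length-$n$ factor of a Sturmian row cannot be bounded polynomially in $n$ uniformly in $\alpha$, and no amount of ``slack in the second dimension'' absorbs this. The paper's mechanism is different and essential: one only establishes the horizontal bound $B=6^{5m+3}n^4$ for $\alpha$ in a Diophantine-good set $\mathcal W_{n\times m}=\mathcal G^{a,b}\cap\mathcal G^{c,d}$, shows that this set is \emph{fat} relative to the Farey partition $\mathcal F_{2^mn}$ (so that every legal $m\times n$ configuration is realized over some interval $I_c\subset\mathcal W_{n\times m}$ of explicit width $2/(6^{4m+3}n^4)$), and then uses the \emph{vertical} dynamics to steer an arbitrary angle into $I_c$. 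That last step is where the deep input lives, and it is absent from your proposal.

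Relatedly, you have misidentified both the nature of the vertical map and the source of the exponent $34.464$. The angle map $f$ is not a piecewise-expanding Markov map with mixing fibres; it is conjugate (via $x\mapsto(\log x+\log 3)/\log 6$) to the rigid rotation by $\log 2/\log 6$, and the whole system has zero entropy --- nothing mixes. The time for an $f$-orbit to become $\ell$-dense with $\ell=2/(6^{4m+3}n^4)$ is controlled by the irrationality measure of $\log 2/\log 6$, which is bounded by $8.616$ using Rhin's effective lower bounds on $|u_0+u_1\log 2+u_2\log 3|$; the exponent $34.464=4\times 8.616$ and the factor $6^{34.464m}$ arise from raising $\ell^{-1}\asymp 6^{4m}n^4$ to that power. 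So the number-theoretic worst case that produces $34.464$ concerns the single number $\log 2/\log 6$ and the vertical direction, not the Diophantine behaviour of the horizontal rotation numbers as you propose. Without the irrationality-measure input (or some substitute for it), your outline cannot produce an explicit bound at all.
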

	

	Theorem \ref{ThmKCBounds} gives explicit bounds on the maximum
	waiting time for any $m\times n$ configuration.  In particular 
	these bounds are of the form $\exp(m)\cdot\poly(n)$. 
	
\section{Definitions}
	A Wang tiling is a class of nearest-neighbor subshift of finite type (SFT)
	on $\Z^2$ whose rules can be given by square tiles with labeled edges.  
	We will interpret valid Kari-Culik tilings
	as a nearest-neighbor SFT on 13 symbols with the adjacency rules
	given by Figure \ref{FigKCTiles}.  The set of the 13 Kari-Culik tiles
	will be called $\mathfrak K$.

	$T,S$ are the usual horizontal and vertical shifts on $\Z^2$.  We say
	a configuration $x\in\mathfrak K^{\Z^2}$ is \emph{aperiodic} if 
	$T^aS^b x = x$ implies that $(a,b)=(0,0)$. 
	We denote by $\Orb(x) = \{T^aS^bx: a,b\in\Z\}$
	the orbit of $x$ and $\Orb_X(x) = \{X^ax:a\in\Z\}$ the orbit of $x$ under
	the map $X$ where $X\in\{T,S\}$.

	Given a vector $\vec x\in\R^\Z$,
	$x_i$ is the $i$th component of $\vec x$.
	If $x=(\ldots, x_{-1},x_{0},x_{1},\ldots)\in\mathfrak K^{\Z}$,
	then $(x)_i^j = (x_i,\ldots, x_j)$ is the subword of $x$ from position
	$i$ to $j$ and if $x\in \mathfrak K^{\Z^2}$, $(x)_i$ refers to the $i$th
	row of $x$. Further, if $A\subset \Z^2$ and $x\in \mathfrak K^{\Z^2}$,
	then $x|_A$ is the restriction of $x$ to the indices in $A$.  
	We denote by $d$ the usual metric on bi-infinite sequences. That is,
	\[
		d(x,y) = \inf\{ 2^{-i}: (x)_{-i}^i=(y)_{-i}^i\}.
	\]

	\begin{definition}[$a$-valuation]
		For $q\in \Q$, we denote by $|q|_a$ the $a$-valuation of $q$.
		That is, if
		\[
			q=\prod_{p\text{ prime}} p^{n_p}
		\]
		is the prime decomposition of $q$ with $n_p\in\Z$, then
		$
			|q|_a = -n_a.
		$
	\end{definition}

	\begin{definition}[Rotation Sequence]
		A \emph{rotation sequence} corresponding to
		the parameters $\alpha, t\in \R$ is the sequence $x=\Rf(\alpha,t)$
		or $x'=\Rc(\alpha,t)$ where
		\[
			(x)_i = \floor{i\alpha+t}-\floor{(i-1)\alpha+t}
		\]
		and
		\[
			(x')_i = \ceil{i\alpha+t}-\ceil{(i-1)\alpha+t}.
		\]
		The parameters $\alpha,t$ are called the \emph{angle} and
		the \emph{phase} of the sequence.
	\end{definition}

	\begin{definition}[Sturmian Sequence]
		A sequence $x$ is a \emph{Sturmian sequence} if $x=\Rf(\alpha,t)$
		or $x=\Rc(\alpha,t)$ for some $\alpha,t\in \R$. We call
		$\alpha$ the \emph{angle} of $x$ and $t$ a \emph{phase} of $x$.
		$\mathcal S$ denotes the set of all Sturmian sequences
		and $\bar{\mathcal S}$ denotes its closure under $d$.
		$\bar{\mathcal S}$ is called the set of \emph{generalized
		Sturmian sequences}.
	\end{definition}
	Unlike some authors, we allow Sturmian sequences to be periodic.
	A consequence is that although the angle of any Sturmian sequence
	is uniquely defined as the average of its symbols, if the angle of a
	Sturmian sequence is rational (and hence the sequence is 
	periodic), its phase is not uniquely defined.
	Further, we
	allow Sturmian sequences to consist of symbols other than $\{0,1\}$.
	For a detailed
	exposition of Sturmian sequences and their equivalent characterizations, 
	see \cite{fogg,lothaire}.  It is also worth noting that the set of generalized
	Sturmian sequences is strictly larger than the set of Sturmian sequences.
	For example $x=(\ldots, 0,0,1,0,0,\ldots)\in\bar{\mathcal S}$ but $x\notin \mathcal S$.

	\begin{definition}[Balanced Sequence]
		A sequence $x$ is a \emph{balanced sequence} if for any two subwords
		$u=u_1\cdots u_n$ and $v=v_1\cdots v_n$ of $x$ of equal length, we have
		$|\sum u_i - \sum v_i| \leq 1$.
	\end{definition}

	Every Sturmian sequence is a balanced sequence \cite{fogg}. 
	\begin{proposition}
		Every generalized Sturmian sequence is a balanced sequence.
	\end{proposition}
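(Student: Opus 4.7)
The plan is to observe that the balanced condition is preserved under the metric $d$, since any potential violation is witnessed on a finite window. Combined with the fact (cited from \cite{fogg}) that every Sturmian sequence is balanced, this yields the result for the closure $\bar{\mathcal S}$.

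More concretely, let $x \in \bar{\mathcal S}$ and pick a sequence $x^{(n)} \in \mathcal S$ with $x^{(n)} \to x$ in $d$. Let $u = (x)_a^{a+k-1}$ and $v = (x)_b^{b+k-1}$ be two subwords of $x$ of equal length $k$. Set $M = \max(|a|,|a+k-1|,|b|,|b+k-1|)$ and choose $n$ large enough so that $d(x^{(n)},x) < 2^{-M}$, i.e.\ so that $x^{(n)}$ and $x$ agree on the window $[-M, M]$. Then $u$ and $v$ are also subwords of $x^{(n)}$ occurring at the same positions. Since $x^{(n)}$ is Sturmian and therefore balanced, we have $|\sum u_i - \sum v_i|\le 1$. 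Because $u,v$ were arbitrary equal-length subwords of $x$, we conclude that $x$ is balanced.

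There is no real obstacle here; the proof is essentially a one-line compactness-style argument, the only thing to verify being that the defining inequality for ``balanced'' only involves two finite subwords and is therefore detectable on a sufficiently large finite window, which is exactly the notion of convergence encoded by $d$.
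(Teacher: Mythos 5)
Your proof is correct and is essentially the same argument as the paper's: both exploit the fact that a violation of the balance condition involves only two finite subwords, hence is visible on a finite window, and therefore transfers to a nearby genuine Sturmian sequence, which is balanced. The only cosmetic difference is that the paper phrases it as a proof by contradiction while you argue directly.
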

	\begin{proof}
		Let $x\in \bar{\mathcal S}$ and suppose there exist $u=u_1\cdots u_n$ and $v=v_1\cdots v_n$
		such that $|\sum u_i-\sum v_i|>1$.  Since $x$ is a limit point of $\mathcal S$, there
		must exist $x'\in \mathcal S$ such that $u,v$ are both subwords of $x'$.  However, $x'$ is a
		Sturmian sequence and therefore balanced, which is a contradiction.
	\end{proof}

	\begin{definition}
		For a sequence $x$, define $\alpha(x)=\lim_{n\to\infty}\frac{1}{2n+1}\sum_{|i|\leq n} x_i$ to be the average of
		its symbols if it exists.
	\end{definition}

	For any $x\in \bar{\mathcal S}$, $\alpha(x)$ is defined. This follows from the
	fact that  $x\in\bar{\mathcal S}$ is a balanced sequence
	and therefore the average of its symbols exist.
	Now, from the definition of a rotation sequence, we see if
	$x\in \mathcal S$, then $\alpha(x)$ is the angle of $x$.

	\begin{definition}
		For a Sturmian sequence $x$, define $t(x)=\inf\{t:t\text{ is a phase for }x\}$.
	\end{definition}

	\begin{proposition}
		For $x\in\bar{\mathcal S}$, if $\alpha(x)\notin \Q$ then $x\in \mathcal S$
		and
		$x=\Rf(\alpha(x),t(x))$ or $x=\Rc(\alpha(x),t(x))$.
	\end{proposition}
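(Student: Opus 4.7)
My plan is to take a sequence $x^{(n)}\in \mathcal S$ converging to $x$ in the metric $d$ and extract subsequential limits of the angles $\alpha_n$ and phases $s_n$ to recover the rotation-sequence form of $x$. By passing to a subsequence I may assume $x^{(n)} = \Rf(\alpha_n, s_n)$ for every $n$ (the ceiling case is handled symmetrically), with $s_n\in [0,1)$. Since each $x^{(n)}$ is balanced and agrees with $x$ on the window $\{-N,\ldots,N\}$ for all sufficiently large $n$, comparing window averages and using the discrepancy estimate $\left|\tfrac{1}{2N+1}\sum_{|i|\leq N} y_i - \alpha(y)\right|\leq \tfrac{1}{2N+1}$, valid for any balanced $y$, I conclude $\alpha_n \to \alpha := \alpha(x)$. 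A further subsequence gives $s_n\to s$ for some $s\in [0,1]$.

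Next I identify $x$ as $\Rf(\alpha,s)$ or $\Rc(\alpha,s)$. Since $\alpha$ is irrational, at most one integer $i_0$ satisfies $i_0\alpha + s \in \Z$. For every $i$ with neither $i\alpha+s$ nor $(i-1)\alpha+s$ an integer, continuity of $\floor{\cdot}$ at the limit points yields
\[
    (x)_i = \lim_n (x^{(n)})_i = \floor{i\alpha+s}-\floor{(i-1)\alpha+s},
\]
matching both $\Rf(\alpha,s)_i$ and $\Rc(\alpha,s)_i$. If the exceptional $i_0$ exists, I pass to a subsequence along which $i_0\alpha_n+s_n$ lies consistently on one side of the integer $i_0\alpha+s$. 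On the high side, $\floor{i_0\alpha_n+s_n}\to i_0\alpha+s$ and a direct computation shows $(x)_{i_0}$ and $(x)_{i_0+1}$ match $\Rf(\alpha,s)$; on the low side the limit drops by one, and they match $\Rc(\alpha,s)$. Either way $x\in\mathcal S$.

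To identify the resulting phase with $t(x)$, I show that for irrational $\alpha$ the map $t\mapsto \Rf(\alpha,t)$ is injective on $[0,1)$, and likewise for $\Rc$. Given $t\neq t'$ in $[0,1)$, the density of $\{k-i\alpha \bmod 1 : k,i\in\Z\}$ produces a crossing $k_0 - i_0\alpha\in [t,t')$, forcing $\floor{i_0\alpha+t}\neq\floor{i_0\alpha+t'}$. Because $|t-t'|<1$ each index $i$ experiences at most one crossing in $[t,t')$, so no cancellation occurs and $\Rf(\alpha,t)_{i_0}\neq \Rf(\alpha,t')_{i_0}$. Hence the set of phases of $x$ in $[0,1)$ is a singleton, the infimum $t(x)$ is attained, and $t(x)=s$.

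The main obstacle I anticipate is the exceptional-index argument in the second step: I must correctly determine, from how $(\alpha_n,s_n)$ approaches $(\alpha,s)$, whether the limit yields the floor or ceiling rotation sequence, and then verify agreement with $x$ at both indices $i_0$ and $i_0+1$, since these are the only two positions at which $\Rf(\alpha,s)$ and $\Rc(\alpha,s)$ differ.
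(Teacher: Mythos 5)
Your proof is essentially correct, but it takes a genuinely different route from the paper. The paper's argument is two lines long and leans on classical machinery: since $\alpha(x)\notin\Q$, the sequence $x$ is not eventually periodic; an aperiodic balanced sequence is Sturmian by the Morse--Hedlund-type characterization cited from \cite{fogg}; and an irrational Sturmian sequence has a unique phase. You instead give a self-contained compactness argument: extract a convergent subsequence of angles (via the balanced-sequence discrepancy bound) and phases from approximating points of $\mathcal S$, and pass to the limit in the defining formula, handling the single exceptional index $i_0$ with $i_0\alpha+s\in\Z$ by tracking which side $i_0\alpha_n+s_n$ approaches from --- this is exactly where the $\Rf$ versus $\Rc$ dichotomy in the statement comes from, and your analysis of it is right. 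What the paper's approach buys is brevity and a clean conceptual reduction to a known theorem; what yours buys is independence from the balanced-aperiodic-implies-Sturmian characterization and an explicit construction of the limiting angle and phase, at the cost of more bookkeeping. Two small points to tighten: in the phase-injectivity step, a crossing at index $i_0$ can in principle be cancelled by a simultaneous crossing at $i_0-1$, so you should either argue by telescoping ($\floor{N\alpha+t}-\floor{N\alpha+t'}$ would have to be constant in $N$, contradicting density) or note that not every index can carry a crossing; and the subsequential limit $s$ may equal $1$, in which case it should be reduced mod $1$ before being identified with $t(x)$.
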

	\begin{proof}
		Since $\alpha(x)\notin \Q$, we necessarily have that $x$ is not eventually periodic.  Since $x$
		is also a balanced sequence, $x$ is an aperiodic Sturmian sequence \cite{fogg}.
		The proposition now follows from the fact that an irrational
		Sturmian sequence has a unique phase.
	\end{proof}

	\begin{definition}
		$\Phi:\mathfrak K^{\Z^2}\to \{0,1,2\}^{\Z^2}$ is projection onto
		the bottom labels of tiles in $\mathfrak K$ followed by mapping 
		the symbol $0'$ to $0$.
	\end{definition}

	\begin{definition}
		The set $KC=\{x: x\text{ is a Kari-Culik tiling and }\Phi(x)\text{
		consists of generalized}\allowbreak\text{ Sturmian rows}\}$.
		$KC_{\Q^c}=\{x\in KC: (\Phi(x))_i\text{ has an irrational angle for all }i\}$.
	\end{definition}

	Extending notation, for $x\in KC$, we define 
	$\alpha(x) = (\ldots, \alpha((\Phi(x))_0),\alpha((\Phi(x))_1)
	,\ldots)$ and $t(x) = (\ldots, t((\Phi(x))_0),t((\Phi(x))_1),
	\ldots)$ to be the \emph{angle vector} and \emph{phase vector} of $x$ (if
	they exist).

\section{Kari-Culik Properties}
	Notice that for any Kari-Culik tiling, the rows fall into two distinct categories:
	those where every tile
	has left-right edge labels in $\{\tfrac{0}{3},\tfrac{1}{3},\tfrac{2}{3}\}$ and
	those where every tile has left-right edge labels in $\{0,-1\}$.  
	We will call these rows as well as the tiles in each row
	\emph{type $\frac{1}{3}$} and \emph{type $2$} respectively.  
	The convention in this paper will be to refer to the labels of a tile in $\mathfrak K$
	in clockwise order starting with the bottom label.  That is, the labels $a,b,c,d$
	of a tile will correspond to the figure:
		\begin{center}
			\includegraphics[width=.75in]{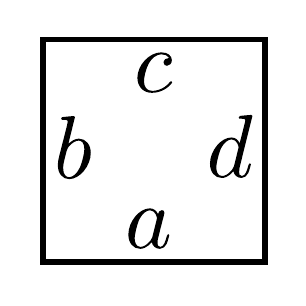}
		\end{center}

	Part of the cleverness of the Kari-Culik
	tilings is that every tile satisfies the following.

	\begin{definition}[Multiplier Property]
		A Kari-Culik tile with bottom, left, top, and right labels of
		$a,b,c,d$ satisfies the relationship
		\begin{equation}\label{EqMultProperty}
			\lambda a + b = c + d
		\end{equation}
		where $\lambda\in\{\frac{1}{3},2\}$ corresponds to the type of the tile.
		We also refer to $\lambda$  as the \emph{multiplier} of the tile.
	\end{definition}

	\begin{proposition}\label{PropMultiplier}
		Fix a Kari-Culik configuration $x$ and let $r_0=\Phi((x)_0)$ and
		$r_1 = \Phi((x)_1)$.  Then, if the average of $r_0$ exists,
		it satisfies the relation
		\[
			\lambda \alpha(r_0) = \alpha(r_1)
		\]
		where $\lambda\in\{\frac{1}{3},2\}$ is the type of $(x)_0$.
	\end{proposition}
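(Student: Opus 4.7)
The plan is to apply the multiplier property (\ref{EqMultProperty}) tile-by-tile along row $0$ and exploit the telescoping of shared left/right edge labels. Label the tile at horizontal position $i$ in row $(x)_0$ with bottom, left, top, right labels $a_i, b_i, c_i, d_i$. Since all tiles in a single row share the same multiplier (the paper observes that rows are uniformly either type $\tfrac13$ or type $2$), (\ref{EqMultProperty}) gives the uniform equation $\lambda a_i + b_i = c_i + d_i$ for every $i\in\Z$ with one common $\lambda$. Horizontal Wang adjacency forces $d_i = b_{i+1}$, so summing over $|i|\leq n$ collapses the left/right labels to a boundary term:
\[
\lambda \sum_{|i|\leq n} a_i \;=\; \sum_{|i|\leq n} c_i \;+\; (b_{n+1} - b_{-n}).
\]

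Next I would divide by $2n+1$ and let $n\to\infty$. The alphabet of edge labels is finite, so $|b_{n+1}-b_{-n}|$ is bounded and the boundary contribution $(b_{n+1}-b_{-n})/(2n+1)$ vanishes. By hypothesis $\alpha(r_0)$ exists, so $\frac{1}{2n+1}\sum_{|i|\leq n} a_i \to \alpha(r_0)$; here I use that the numerical values of the bottom labels $a_i$ appearing in (\ref{EqMultProperty}) coincide with the symbols of $r_0=\Phi((x)_0)$, because $\Phi$ only relabels $0'\mapsto 0$, which preserves numerical value. Vertical Wang adjacency between rows $0$ and $1$ identifies the top label $c_i$ of the tile at $(i,0)$ with the bottom label of the tile at $(i,1)$, hence with the $i$th symbol of $r_1$. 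Consequently $\frac{1}{2n+1}\sum_{|i|\leq n} c_i$ converges, the limit is by definition $\alpha(r_1)$, and the displayed equation passes to $\lambda\alpha(r_0)=\alpha(r_1)$.

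The computation is essentially a telescoping Ces\`aro argument; the only thing requiring any care is bookkeeping around $\Phi$. Specifically, one must confirm that the arithmetic relation in (\ref{EqMultProperty}) — which treats edge labels as numbers — is compatible with the averaging of the symbol sequences $r_0,r_1$ produced by $\Phi$. Since $0$ and $0'$ are numerically identical, the relabeling does not disturb any sum, and no further work is needed. This is the main (and only) potential obstacle; everything else is routine.
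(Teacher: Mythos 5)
Your proof is correct and follows essentially the same route as the paper: telescoping the multiplier identity $\lambda a_i - c_i = d_i - b_i$ along a central segment, dividing by $2n+1$, and letting the bounded boundary term vanish. Your extra bookkeeping about $\Phi$ preserving numerical values and about the convergence of the top-label averages being a consequence rather than a hypothesis is a slight refinement of the paper's argument, not a different approach.
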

	\begin{proof}
		This is a direct result of the telescoping nature of the multiplier property
		when rewritten as $\lambda a-c=d-b$.
		Notice that in any row, every tile is the same type and therefore has the
		same multiplier.
		Let $a_i$ be the bottom labels and $c_i$ be the top labels of $(x)_0$.
		Summing along a central segment of length $2n+1$, we have
		\begin{equation}\label{EqMult}
			\lambda \sum_{i=-n}^n a_i -\sum_{i=-n}^n  c_i = d -b
		\end{equation}
		where $b,d$ are the left
		and right labels of the central segment.
		Since
		\[
			\alpha(r_0) = \lim_{n\to\infty}\frac{1}{2n+1}\sum_{i=-n}^n a_i
			\qquad\text{and}\qquad
			\alpha(r_1) = \lim_{n\to\infty}\frac{1}{2n+1}\sum_{i=-n}^n c_i
		\]
		and $b,d$ are bounded, dividing both sides of Equation 
		\eqref{EqMult} by $2n+1$
		and taking a limit produces the desired relationship.
	\end{proof}

	It was further shown by Durand et al\mbox{.} that in fact the average of the
	bottom labels of any row in a Kari-Culik tiling exists, making 
	the assumption that the average exists in Proposition 
	\ref{PropMultiplier} unnecessary \cite{durand}.

	\begin{proposition}
		For a Kari-Culik tiling $x$, $\alpha((\Phi(x))_i)\in [1/3,2]$.
	\end{proposition}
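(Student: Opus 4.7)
The upper bound is immediate: since $\Phi(x)$ takes values in $\{0,1,2\}$, the average $\alpha((\Phi(x))_i)$ of any row lies in $[0,2]$.

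For the lower bound $\alpha\geq 1/3$, my plan is to use Proposition \ref{PropMultiplier} in both directions. Write $\alpha_i:=\alpha((\Phi(x))_i)$, so that $\alpha_{i+1}=\lambda_i\alpha_i$ with $\lambda_i\in\{1/3,2\}$. The global upper bound $\alpha_j\leq 2$ then yields two forcing rules: $\alpha_i>1$ implies $\lambda_i=1/3$ (else $\alpha_{i+1}>2$), and $\alpha_i>2/3$ implies $\lambda_{i-1}=2$ (else $\alpha_{i-1}=3\alpha_i>2$). I would then argue by contradiction: suppose some $\alpha_0\in[0,1/3)$. The degenerate case $\alpha_0=0$ implies $\alpha_i=0$ for every $i$ via the multiplier relation, hence every row of $\Phi(x)$ is identically $0$, which should be ruled out by direct inspection of the 13 Kari-Culik tiles (no all-bottom-$0$ tiling is possible). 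For $\alpha_0\in(0,1/3)$, the plan is to analyze the tile list and show that no row of $\Phi(x)$ with average strictly less than $1/3$ is compatible with the horizontal edge-matching rules.

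The main obstacle is the subcase $\alpha_0\in(0,1/3)$. The multiplier dynamics alone do not rule this out, since an orbit such as $\alpha_n=\alpha_0/3^n$ stays in $(0,2]$ for every $n\geq 0$ with no global contradiction, and a symmetric backward argument with multipliers $\{3,1/2\}$ does not close either: any distribution of $\lambda_i\in\{1/3,2\}$ with density of $1/3$s at least $\log 2/\log 6$ forward and density of $2$s at least $\log 3/\log 6$ backward is consistent with $\alpha\in(0,2]$. Consequently, the contradiction must come from the specific tile structure, most naturally by classifying the 13 tiles according to their bottom labels (bottom $2$ only appears on type $1/3$ tiles, top $2$ only on type $2$ tiles, and so on) and showing that the horizontal adjacency constraints, together with the vertical multiplier relation, force any row with average in $[0,1/3)$ to be unrealizable. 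I expect this final step to be a short but careful case analysis on the tile list, and to be the technical heart of the argument.
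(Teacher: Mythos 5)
Your upper bound is fine, and your diagnosis is accurate: the multiplier dynamics alone cannot rule out $\alpha_0\in(0,1/3)$, and the argument must come from the tiles. But that is exactly where your proposal stops --- the ``short but careful case analysis'' you defer to is the entire content of the lower bound, so as written there is a genuine gap. Moreover, your framing points in a slightly wrong direction: you propose to show that a single row with average below $1/3$ is incompatible with the \emph{horizontal} edge-matching rules, but a row of type $2$ tiles with many bottom labels equal to $0$ is not forbidden by horizontal adjacency within that row alone. The decisive constraint is \emph{vertical}: the bottom labels of the offending row must match the top labels of the row beneath it, and it is the structure of that lower row which bounds the density of zeros.

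Concretely, the paper's argument is: (i) in a row of type $\frac{1}{3}$ every bottom label is at least $1$, so $\alpha\geq 1$ and the only rows at issue are type $2$; (ii) a type $2$ row has bottom labels containing $0$ or $0'$ but not both (otherwise the row below would need tiles of both types); (iii) if the bottom labels contain $0$, the row below is of type $\frac{1}{3}$, and inspection of the type $\frac{1}{3}$ tiles shows their top labels never contain three consecutive $0$'s, so at most two of every three bottom labels of the row above can be $0$, giving $\alpha\geq 1/3$; (iv) if the bottom labels contain $0'$, those occurrences are isolated, giving $\alpha\geq 1/2$. Note this also disposes of your degenerate case $\alpha_0=0$ without separate treatment (and your claim that $\alpha_i=0$ forces the row to be identically $0$ is itself not immediate --- a sequence such as $\ldots,0,0,1,0,0,\ldots$ has average $0$). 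To complete your proof you would need to supply step (iii), the bound on runs of $0$'s coming from the tops of the type $\frac{1}{3}$ tiles, which is the one idea your proposal is missing.
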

	\begin{proof}
		As noted earlier, $\alpha((\Phi(x))_i)$ always exists.
		Fix $i$ and let $\alpha = \alpha((\Phi(x))_i)$. Inspecting
		the tile set, we see that the largest symbol on the bottom 
		of any tile is $2$ and so $\alpha\leq 2$.  To see that $\alpha\geq 1/3$,
		we will consider rows by type.  For a row of type $\frac{1}{3}$, the smallest
		symbol appearing on the bottom is $1$, and so $\alpha\geq 1\geq 1/3$.

		For a row of type $2$, notice that the bottom labels may contain $0$ or
		$0'$ but not both (if a row of type $2$ had both $0$ and $0'$ on the bottom, the
		row below it would need to have tiles of both type $\frac{1}{3}$ and type $2$).
		If the bottom labels only contain $0$, then the row below $(x)_i$ must 
		be of type $\frac{1}{3}$.
		Inspecting the type $\frac{1}{3}$ tiles, we see that no more than 
		two consecutive $0$ symbols may occur as top labels and so $(x)_i$
		cannot have more than two $0$ symbols in a row as bottom labels giving
		$\alpha\geq 1/3$.  Finally, notice that as bottom labels, all occurrences
		of $0'$ are isolated.  Thus, if the row below $(x)_i$ is of type $2$,
		$\alpha\geq 1/2\geq 1/3$.
	\end{proof}

	\begin{definition}
		For $x\in[1/3,2]$, define
		\[
			\lambda_x = \left\{\begin{array}{cl}
					2 &\text{ if } x\in[1/3,1)\\
					1/3 &\text{ if } x\in [1,2]
				\end{array}\right.
			\qquad\text{and}\qquad
			f(x) = \lambda_x x = \left\{\begin{array}{cl}
					2x &\text{ if } x\in[1/3,1)\\
					x/3 &\text{ if } x\in [1,2]
				\end{array}\right. .
		\]
	\end{definition}

	\begin{corollary}\label{PropAnglesRespect}
		Fix a Kari-Culik configuration $x$ and let $r_i=\Phi((x)_i)$. Then,
		\[
			\alpha(r_{i+1}) = f(\alpha(r_{i}))
		\]
		provided $\alpha(r_i)\neq 1$.
	\end{corollary}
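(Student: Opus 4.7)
The plan is to combine Proposition \ref{PropMultiplier} with a case analysis that shows the hypothesis $\alpha(r_i) \neq 1$ forces the type of the row $(x)_i$ to match the piecewise definition of $\lambda_{\alpha(r_i)}$, so that the multiplier from Proposition \ref{PropMultiplier} agrees exactly with $f$.

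First, I would invoke Proposition \ref{PropMultiplier} together with the remark of Durand et al.\ that $\alpha(r_i)$ always exists, giving $\alpha(r_{i+1}) = \lambda \alpha(r_i)$ where $\lambda \in \{1/3, 2\}$ is the multiplier of $(x)_i$. The task then reduces to showing $\lambda = \lambda_{\alpha(r_i)}$ whenever $\alpha(r_i) \neq 1$.

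Next, I would inspect the tile set $\mathfrak{K}$ in Figure \ref{FigKCTiles} row-by-row in two cases. For a row of type $1/3$, the observation already used in the previous proposition is that every type-$1/3$ tile has bottom label at least $1$, so $\alpha(r_i) \geq 1$, and combining with $\alpha(r_i) \in [1/3, 2]$ gives $\alpha(r_i) \in [1, 2]$. For a row of type $2$, every type-$2$ tile has bottom label in $\{0, 0', 1\}$, which under $\Phi$ lies in $\{0, 1\}$, so $\alpha(r_i) \leq 1$, and we obtain $\alpha(r_i) \in [1/3, 1]$. Hence the two row types can only coincide when $\alpha(r_i) = 1$.

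Finally, ruling out this single value by hypothesis, the type of $(x)_i$ is determined: type $2$ when $\alpha(r_i) \in [1/3, 1)$ and type $1/3$ when $\alpha(r_i) \in (1, 2]$. Comparing with the definition of $\lambda_x$, we conclude $\lambda = \lambda_{\alpha(r_i)}$, so $\alpha(r_{i+1}) = \lambda_{\alpha(r_i)} \alpha(r_i) = f(\alpha(r_i))$. The only potential obstacle is the bookkeeping in the tile inspection, but since the analysis needed is essentially the same as in the preceding proposition, there is no substantive difficulty.
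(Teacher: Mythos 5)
Your proposal is correct, and it reaches the conclusion by a slightly different mechanism than the paper. Both proofs start from Proposition \ref{PropMultiplier}, which gives $\alpha(r_{i+1})=\lambda\alpha(r_i)$ with $\lambda\in\{\tfrac{1}{3},2\}$; the difference is in how $\lambda$ is pinned down. The paper does it purely arithmetically on the \emph{output} side: since $\alpha(r_i)$ and $\alpha(r_{i+1})$ both lie in $[1/3,2]$, only one of $\tfrac{1}{3}\alpha(r_i)$ and $2\alpha(r_i)$ can land in $[1/3,2]$ unless $\alpha(r_i)=1$, and that choice is exactly $f$. You instead determine $\lambda$ on the \emph{input} side by tying the row type to the value of $\alpha(r_i)$: type-$\tfrac{1}{3}$ tiles have bottom labels at least $1$, so such rows force $\alpha(r_i)\in[1,2]$, while type-$2$ tiles have bottom labels in $\{0,0',1\}$, forcing $\alpha(r_i)\in[1/3,1]$; hence the type, and with it $\lambda$, is determined once $\alpha(r_i)\neq 1$. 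Your route requires another pass through the tile set (essentially re-deriving part of what later appears in Proposition \ref{PropTypesOfRows}), whereas the paper's one-line argument reuses the already-established containment $\alpha(r_{i+1})\in[1/3,2]$ and needs no further inspection of $\mathfrak K$. The trade-off is that your version yields a small bonus fact along the way, namely that the type of a row is readable off its angle, which the paper only records later.
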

	\begin{proof}
		Since $\alpha(r_{i+1}) = \lambda\alpha(r_{i})$ for some 
		$\lambda\in\{\frac{1}{3},2\}$,
		the constraint that both $\alpha(r_{i+1}),\alpha(r_{i})
		\in[1/3,2]$ uniquely determines $\lambda$ when 
		$\alpha(r_i)\neq 1$. 
	\end{proof}

	Even if $\alpha(r_i)=1$, there are still only two options
	for $\alpha(r_{i+1})$ and as will be
	shown in Proposition \ref{PropfAperiodic}, orbits under $f$
	are aperiodic, ensuring
	this can occur at most once.

	Using observations about the multiplier of tiles and the 
	averages of sequences of bottom labels, we can refine our classification
	of rows of the Kari-Culik tilings.

	\begin{definition}
		Let $x\in KC$ and $r_i=(x)_i$ be the $i$th row of $x$.
		We define the \emph{general type} of $r_i$ based on the tiles in
		$r_{i-1},r_i,r_{i+1}$ in the following way.
			
		\emph{Type $\frac{1}{3}$}: $r_i$ is of 
			general type $\frac{1}{3}$ if $r_i$ 
			consists of type $\frac{1}{3}$ tiles.

		\emph{Type $2.1$}: $r_i$ is of 
			general type $2.1$ if $r_i$ 
			consists of type $2$ tiles and $r_{i+1},r_{i-1}$
			both consist of type $\frac{1}{3}$ tiles.

		\emph{Type $2.2t$}: $r_i$ is of 
			general type $2.2t$ if $r_i$ 
			consists of type $2$ tiles and $r_{i+1}$
			consists of type $\frac{1}{3}$ tiles while $r_{i-1}$
			consists of type $2$ tiles.

		\emph{Type $2.2b$}: $r_i$ is of 
			general type $2.2b$ if $r_i$ 
			consists of type $2$ tiles and $r_{i-1}$
			consists of type $\frac{1}{3}$ tiles while $r_{i+1}$
			consists of type $2$ tiles.

		We consider a pair of rows whose top row is of general type $2.2t$ and
		whose bottom row is of general type $2.2b$ as \emph{type $2.2$}.
	\end{definition}

	Since general type $\frac{1}{3}$ exactly corresponds to type $\frac{1}{3}$
	and we have no previous definition for type $2.1$, $2.2t$, $2.2b$, or $2.2$,
	without ambiguity we may from now on refer to the general type of a row
	as simply the type of that row.

	\begin{proposition}\label{PropTypesOfRows}
		Let $x\in KC$ and $r_i=(x)_i$ be the $i$th row of $x$.
		The general type of $r_i$ is unique and the tiles that may
		appear in $r_i$ are contained in exactly one
		of the following (non-disjoint) sets based on general type.

		Type $\frac{1}{3}$:
		\vspace{-3.5em}
		\begin{center}
			\includegraphics[width=4.2in]{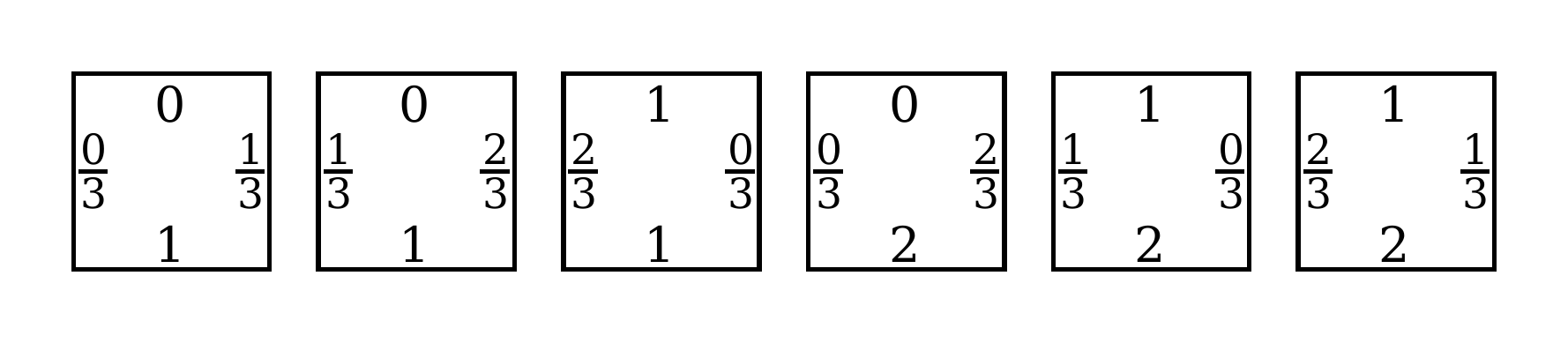}
		\end{center}
		
		Type $2.1$: 
		\vspace{-3.5em}
		\begin{center}
			\includegraphics[width=3in]{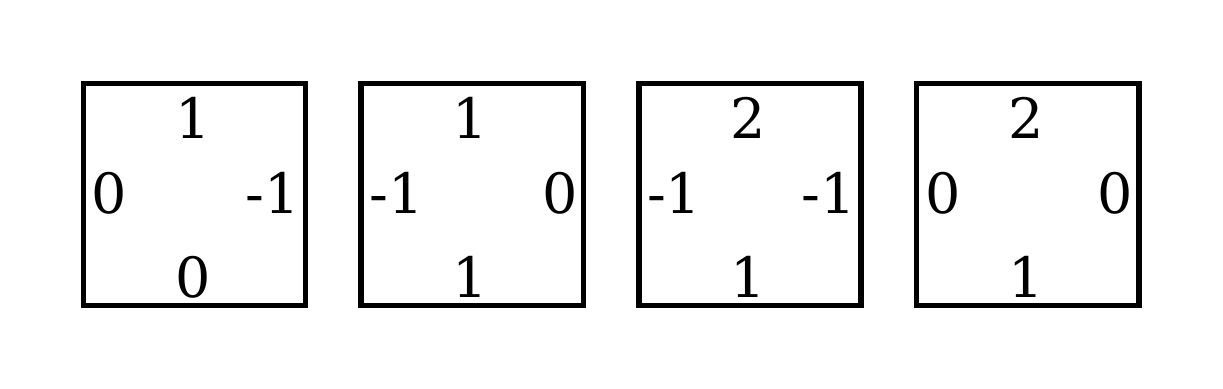}
		\end{center}
		\vspace{-.6cm}
		
		Type $2.2t$: 
		\vspace{-1.5em}
		\begin{center}
			\includegraphics[width=3in]{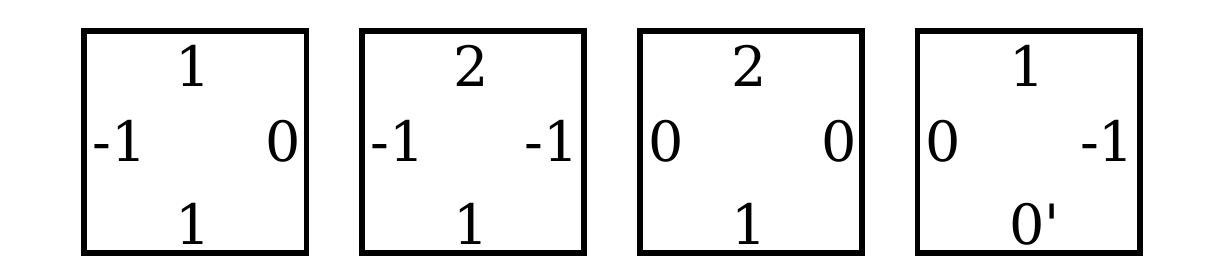}
		\end{center}
		
		Type $2.2b$:
		\vspace{-1.5em}
		\begin{center}
			\includegraphics[width=3in]{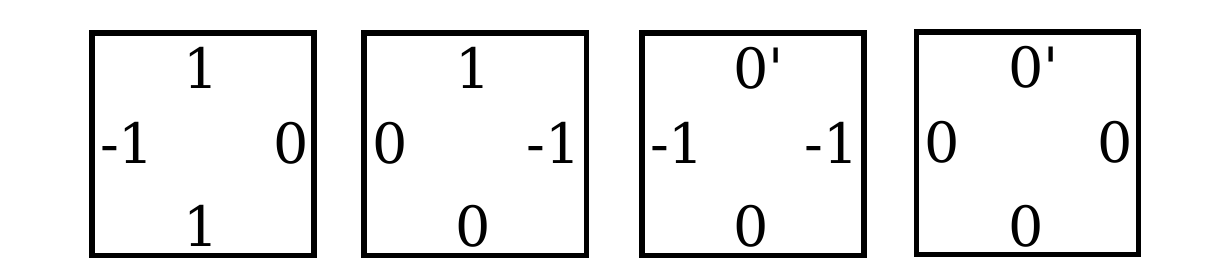}
		\end{center}

		A pair of rows whose top tiles are
		type $2.2t$ and bottom tiles are type $2.2b$ taken together
		and considered as type $2.2$ consists of the stacked tiles:
		\vspace{-1em}
		\begin{center}
			\includegraphics[width=3in]{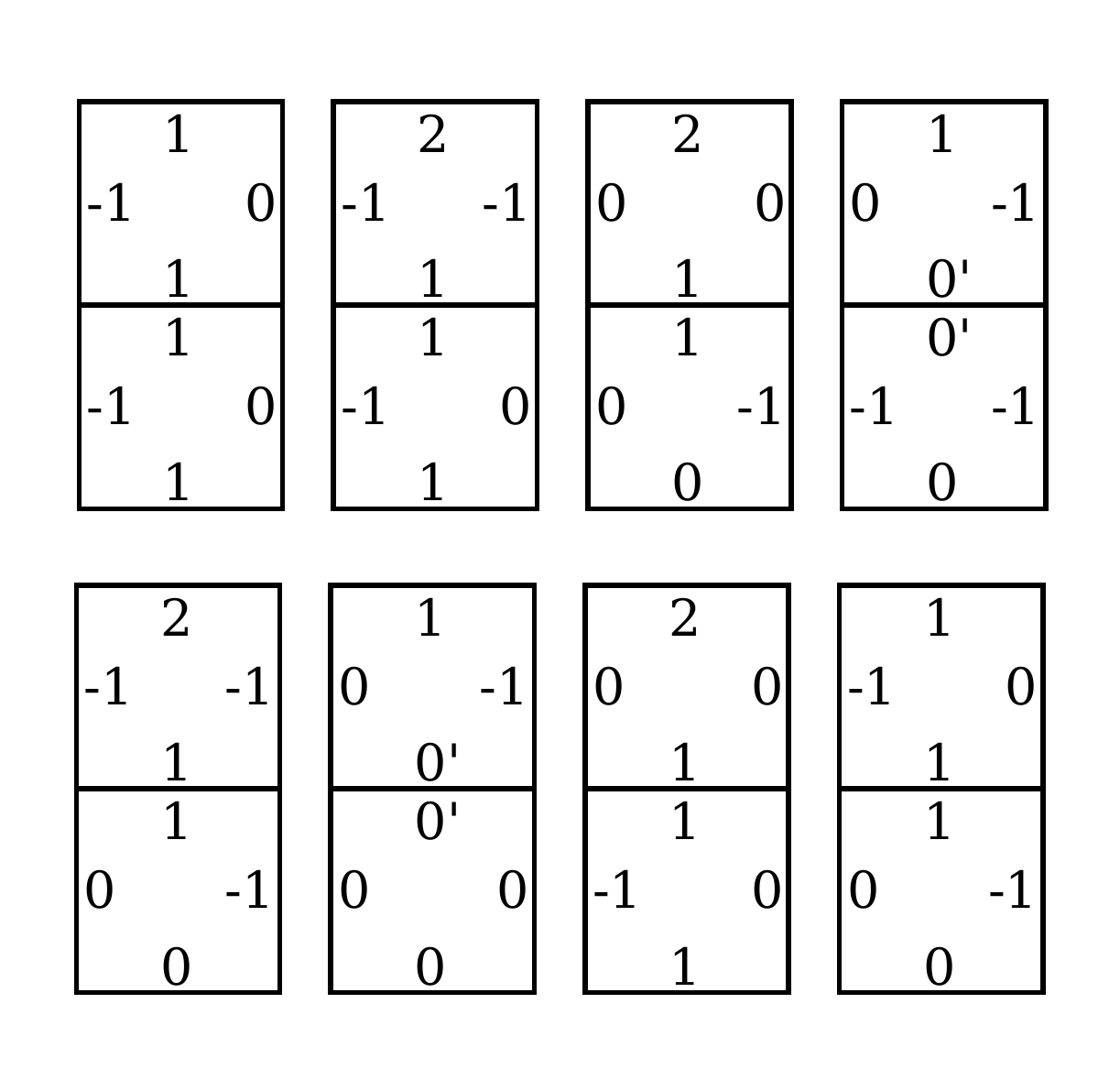}
		\end{center}
	\end{proposition}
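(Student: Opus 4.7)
The plan is to split the statement into two parts: uniqueness of the general type, and the explicit listing of admissible tiles in each type. The four general types are manifestly mutually exclusive by definition, so the only content in the uniqueness claim is that every type~$2$ row falls into one of the three subcases $2.1$, $2.2t$, $2.2b$, i.e.\ that a type~$2$ row cannot have type~$2$ neighbors on both sides. I would prove this using Corollary~\ref{PropAnglesRespect}. If $r_{i-1},r_i,r_{i+1}$ were all of type~$2$, each angle would need to lie in $[1/3,1)$ (the set where the multiplier equals $2$), and iterating the multiplier~$2$ twice gives $\alpha(r_{i+1})=4\alpha(r_{i-1})\geq 4/3$, contradicting $\alpha(r_{i+1})<1$. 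This forces at least one neighbor of every type~$2$ row to be type~$\frac{1}{3}$, so the three listed subcases are exhaustive.

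For the tile enumeration, my approach is a direct case analysis on Figure~\ref{FigKCTiles}. First I would tabulate, for tiles of multiplier~$\frac{1}{3}$ and multiplier~$2$ separately, the sets of labels that can appear on top edges and on bottom edges. For each general type, a tile is admissible iff its multiplier is correct, its bottom labels lie in the set of top labels of the row-type below, and its top labels lie in the set of bottom labels of the row-type above. Intersecting these three constraints for each of the four single-row general types yields the tile lists in the figures. The type~$2.2$ block requires an additional internal check: the top labels of the $2.2b$ tile must match the bottom labels of the $2.2t$ tile stacked above, which further prunes the admissible pairs. To confirm no tile is spuriously excluded from the enumeration, for each listed tile I would exhibit a small local completion showing it does arise in some valid configuration of the appropriate type.

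The main obstacle here is bookkeeping rather than conceptual depth; the argument is entirely local once Corollary~\ref{PropAnglesRespect} has ruled out three stacked type~$2$ rows. I expect the type~$2.2$ pair to be the most delicate case, since both the vertical internal matching between $2.2t$ and $2.2b$ and the external compatibility with type~$\frac{1}{3}$ rows bordering the pair must be imposed simultaneously on a two-tile stack rather than on a single tile.
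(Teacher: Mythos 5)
Your argument is correct and reaches the stated conclusion, but it is organized differently from the paper's proof. You classify rows by the combinatorial types of their neighbors, invoking the angle relation $\alpha(r_{i+1})=2\alpha(r_i)$ of Proposition \ref{PropMultiplier} exactly once, to forbid three consecutive type-$2$ rows; the tile lists then follow from a purely local compatibility check (correct multiplier, bottom labels drawn from the top labels of the row type below, top labels drawn from the bottom labels of the row type above, with the $0$ versus $0'$ distinction doing the work of recording the neighbor's type). The paper instead runs the entire classification through the angle $\alpha=\alpha(\Phi(r_i))$: it proves the strictly stronger statement that, except for $\alpha\in\{1/2,2/3,1\}$, the value of $\alpha$ alone determines the general type, by locating $\alpha$ in one of the intervals $(1,2]$, $(1/2,2/3)$, $[1/3,1/2)$, $(2/3,1)$ and reading off the types of $r_{i\pm1}$ (and $r_{i\pm2}$) from the constraint that every angle lies in $[1/3,2]$, handling the exceptional values of $\alpha$ separately at the end. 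Your route avoids that exceptional-value bookkeeping entirely, which is cleaner for the statement as given; the paper's route buys the $\alpha$-interval characterization of each type (e.g.\ that the bottom row of a type $2.2$ pair has bottom-label angle in $[1/3,1/2]$), which is quoted later in the proofs of Theorem \ref{PropPhiOneToOne} and Proposition \ref{PropBasicConstructRepr}, so the stronger formulation is not gratuitous. One small imprecision in your exclusion argument: a type-$2$ row has angle in the closed interval $[1/3,1]$ rather than $[1/3,1)$, since $\alpha(r_i)=1$ with multiplier $2$ is consistent with $\alpha(r_{i+1})=2$; your contradiction $\alpha(r_{i+1})=4\alpha(r_{i-1})\geq 4/3>1$ survives this correction, since a third type-$2$ row would still force $\alpha(r_{i+1})\leq 1$. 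Also note that the final clause of your enumeration (exhibiting a local completion for each listed tile) addresses a converse the proposition does not actually assert, since the statement only claims containment of the occurring tiles in the listed sets.
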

	\begin{proof}
		Fix a Kari-Culik tiling $x$. Let $r_i=(x)_i$ be the $i$th row of $x$
		and let 
		$\lambda_i$ be its multiplier.
		Let $\alpha$ be the average of the bottom labels of $r_i$.
		We will show something slightly stronger than is asked,
		namely that except for $\alpha\in\{1/2,2/3,1\}$, $\alpha$
		uniquely
		determines the type $r_i$.

		If $\alpha\in(1,2]$, then
		$\lambda = \frac{1}{3}$, and so $r_i$ must consist of tiles of 
		type $\frac{1}{3}$, making $r_i$ of general type $\frac{1}{3}$.

		If $\alpha\in (1/2,2/3)$, $r_{i+1},r_{i-1}$ must be of
		type $\frac{1}{3}$, and so $0'$ cannot occur as a label, 
		making $r_i$ of general type $2.1$ and
		leaving the only available
		tiles those listed as type 2.1.

		If $\alpha\in [1/3,1/2)$, the rows $r_i$ and $r_{i+1}$
			are both of type 2 and $r_{i-1}$ and $r_{i+2}$ 
			are of type $\frac{1}{3}$.  Thus, $r_i$ must
			be of general type $2.2b$ and must consist
		of the tiles listed as type $2.2b$.

		Finally, if $\alpha\in (2/3,1)$, $r_{i-1}$ is of type $2$
		and $r_{i+1}$ and $r_{i-2}$ are both of type $\frac{1}{3}$.
		Thus, $r_i$ must be of general type $2.2t$
		and consists of the tiles listed as type $2.2t$.

		The tiles listed as type 2.2 consist of the
		ways to stack type 2 tiles to be compatible on tops and bottoms with type
		$\frac{1}{3}$ tiles and so correspond exactly to the cases where
		$2.2t$ and $2.2b$ tiles arise in consecutive rows.

		In the remaining cases of $\alpha\in\{1/2,2/3,1\}$, the type of
		$r_i$ is not strictly determined by $\alpha$, but nonetheless,
		the tiles in $r_i$ fall into one of the four categories and the 
		classification is unique.
	\end{proof}

	The tiles listed as type $2.1$ have non-trivial intersection
	with the tiles listed as type $2.2t$ and type $2.2b$, 
	however since no two rows of type $2.1$ occur consecutively, the 
	categorization is unique.
	We call the pairs of tiles listed as type 2.2 \emph{stacked tiles}.
	When we think of a row of a Kari-Culik tiling as being type
	2.2, we may think of its multiplier as being 4 (since it is composed
	of two consecutive rows with multiplier 2).

	\begin{proposition}[Liousse \cite{liousse}]\label{PropfAperiodic}
		The map $f$ is conjugate to an irrational rotation by
		$\log 2 / \log 6$.
	\end{proposition}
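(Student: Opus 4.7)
The plan is to take logarithms, which converts the two multiplicative branches of $f$ (multiplication by $2$ and division by $3$) into translations by $\log 2$ and $-\log 3$, and then exploit the fortunate arithmetic coincidence that $-\log 3 \equiv \log 2 \pmod{\log 6}$. Concretely, I would define $\pi:[1/3,2]\to \R/\log 6\,\Z$ by $\pi(x) = \log x \bmod \log 6$, and observe that since $\log$ is injective on $[1/3,2]$ and its image $[-\log 3,\log 2]$ has length exactly $\log 6$, $\pi$ is a continuous bijection except that it identifies the two endpoints $1/3$ and $2$. Thus $\pi$ descends to a homeomorphism from the circle obtained by gluing the endpoints of $[1/3,2]$ onto $\R/\log 6\,\Z$.

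Next, I would verify the conjugacy by a direct case computation. For $x\in[1/3,1)$, $\log f(x) = \log x + \log 2$, and for $x\in[1,2]$, $\log f(x) = \log x - \log 3 = \log x + \log 2 - \log 6$, which reduces to $\log x + \log 2$ modulo $\log 6$. So on either branch $\pi(f(x)) = \pi(x) + \log 2$, i.e.\ $\pi\circ f$ equals translation by $\log 2$ on $\R/\log 6\,\Z$ composed with $\pi$. Rescaling by $1/\log 6$ turns this into rotation by $\log 2/\log 6$ on $\R/\Z$, yielding the stated conjugacy.

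It remains to check that $\log 2/\log 6$ is irrational. If $\log 2/\log 6 = p/q$ with $p,q\in\Z$ and $q\neq 0$, then $2^q = 6^p = 2^p3^p$, which by unique factorization forces $p=0$ and hence $q=0$, a contradiction.

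The only real subtlety (and the place where I expect the author's treatment to be most delicate) is that $f$ on $[1/3,2]$ is itself \emph{not} continuous at $x=1$: $f(1)=1/3$ while $\lim_{x\to 1^-}f(x)=2$. The conjugacy is genuinely a conjugacy of the $\Z$-action on the quotient circle, where the endpoints $1/3$ and $2$ are identified and this ``jump'' disappears. All other steps are essentially bookkeeping, and the irrationality argument is standard.
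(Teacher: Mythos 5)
Your proposal is correct and follows essentially the same route as the paper, which simply exhibits the explicit conjugacy $\phi(x) = \frac{\log x + \log 3}{\log 6}$ (your $\pi$ rescaled by $1/\log 6$) without further comment. Your additional verification of the branch computation, the irrationality of $\log 2/\log 6$, and the endpoint-identification subtlety are all sound elaborations of what the paper leaves implicit.
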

	\begin{proof}
		An explicit conjugacy 
		$\phi:[1/3,2]\to[0,1]$ is given by $\phi(x) = \frac{\log x + \log 3}{\log 6}$.
	\end{proof}

	Analogously to the way Eigen et al\mbox{.} show the existence of Kari-Culik
	tilings through their Basic Tile Construction, we will show how to take
	an angle vector and a phase vector and produce a valid element of $KC$.

	\begin{definition}[BC Property]
		A pair of vectors $(\vec \alpha, \vec t)\in[1/3,2]^\Z\times [0,1]^\Z$
		satisfies the \emph{BC property} (Basic Construction property) if
		\[
			\lambda_i=\frac{\alpha_i}{\alpha_{i+1}}\in\{\tfrac{1}{3},2\}
		\]
		and\[
			\begin{array}{cl}
				2 t_i = t_{i+1} \mod 1&\text{ if }\lambda_i=2\\
				t_i = 3 t_{i+1} \mod 1&\text{ if }\lambda_i=\tfrac{1}{3}
			\end{array}
		\]
		for all $i$.
	\end{definition}

	Given a pair of vectors $(\vec \alpha,\vec t)$ satisfying the BC property,
	we can construct a point $y\in KC$ via the following procedure.  The
	tile at position $m,n$ in $y$ has bottom, left, top, and right edges given by
	\[
		\begin{array}{rcl}
			\text{bottom} & = & \floor{n\alpha_m+t_m}-\floor{(n-1)\alpha_m+t_m}\\
			\text{left} & = & \lambda\floor{(n-1)\alpha_m+t_m}-\floor{\lambda(n-1)\alpha_m+t_m}\\
			\text{top} & = & \floor{n\alpha_{m+1}+t_{m+1}}-\floor{(n-1)\alpha_{m+1}+t_{m+1}}\\
			\text{right} & = & \lambda\floor{n\alpha_m+t_m}-\floor{\lambda n\alpha_m+t_m} 
		\end{array}
	\]
	where $\lambda = \alpha_m/\alpha_{m+1}$.  Further, if either the bottom or the top label
	is computed to be $0$, then $0$ is replaced with $0'$ if $\alpha_{m-1}\in[1/3,1/2]$ 
	(respectively $\alpha_{m}\in[1/3,1/2]$).
	We can also do the same construction using $\ceil{\cdot}$ instead of $\floor{\cdot}$.
	We call a tiling constructed in this way a \emph{Basic Construction}
	with parameters $(\vec \alpha, \vec t)$.

	\begin{proposition}[Robinson \cite{robbie}]\label{PropRobbie}
		If $(\vec \alpha, \vec t)$ satisfies the BC property, then the resulting
		Basic Construction using either $\floor{\cdot}$ or $\ceil{\cdot}$
		is an element of $KC$.
	\end{proposition}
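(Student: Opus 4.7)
The plan is to verify three conditions for the configuration $y$ produced by the construction: that the rows of $\Phi(y)$ are Sturmian sequences, that horizontally and vertically adjacent tiles share matching edge labels, and that each tile $y_{m,n}$ actually belongs to the 13-element set $\mathfrak K$.

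The first two conditions are essentially immediate from the formulas. The bottom-label formula at row $m$ is by definition $\Rf(\alpha_m, t_m)_n$, so every row of $\Phi(y)$ is Sturmian (after the $0' \mapsto 0$ substitution performed by $\Phi$). For edge compatibility, the right-edge formula at position $(m,n)$ and the left-edge formula at $(m,n+1)$ are identical expressions evaluated at the same point, and likewise the top-edge formula at $(m,n)$ matches the bottom-edge formula at $(m+1,n)$. Both matchings are thus automatic from inspection of the four edge formulas.

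The third condition is where the real content lies. I would first establish the multiplier relation $\lambda a + b = c + d$ by a telescoping computation: the left side collapses to
\[
\lambda\floor{n\alpha_m+t_m} - \floor{\lambda\left[(n-1)\alpha_m+t_m\right]},
\]
and the right side reduces to an analogous expression, where the BC identity $\lambda\alpha_m = \alpha_{m+1}$ together with the phase congruence $\lambda t_m \equiv t_{m+1} \pmod 1$ (interpreted appropriately in each case) make the two agree. For the individual edge labels, note that $\lambda\floor{x} - \floor{\lambda x}$ always lands in $\{0, -1\}$ when $\lambda = 2$ and in $\{0, 1/3, 2/3\}$ when $\lambda = 1/3$, while the bound $\alpha_m, \alpha_{m+1} \in [1/3, 2]$ restricts the bottom and top labels to the valid rotation-sequence alphabets.

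The main obstacle is matching each produced quadruple $(a,b,c,d)$ against the explicit list of 13 tiles rather than merely satisfying the multiplier identity. This amounts to a case analysis on the interval containing $\alpha_m$ that mirrors the classification in Proposition \ref{PropTypesOfRows}: for $\alpha_m \in [1,2]$ the tile is of type $\tfrac{1}{3}$, while for $\alpha_m \in [1/3,1)$ one of the types $2.1$, $2.2t$, or $2.2b$ applies, as further determined by $\alpha_{m-1}$ and $\alpha_{m+1}$. The $0 \mapsto 0'$ replacement rule is precisely what is needed to place type $2$ bottom labels consistently with the type of the neighboring row, so that each computed tile appears in the corresponding diagram and $y \in KC$.
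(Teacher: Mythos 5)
Your proposal is correct and follows essentially the same route as the paper: observe that the rows of $\Phi(y)$ are rotation sequences and that edge compatibility is automatic from the defining formulas, then reduce the remaining work to a case analysis (after substituting the phase congruence from the BC property) verifying that each computed quadruple of labels is one of the 13 tiles. The extra structure you supply — the telescoped multiplier identity, the observation that $\lambda\floor{x}-\floor{\lambda x}$ lands in the correct left/right alphabet for each $\lambda$, and the appeal to the row-type classification to handle the $0\mapsto 0'$ rule — is a reasonable fleshing-out of the case check that the paper itself leaves at the level of ``the details are straightforward.''
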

	\begin{proof}
		First observe that if $y$ is the result of a Basic Construction, 
		then $\Phi(y)$ consists of rows that are rotation sequences and
		therefore Sturmian.  Further, by definition, the top labels of
		each row of $y$ are guaranteed to be compatible with the bottom labels
		of the next row,
		and the right labels of each column of $y$ are guaranteed to be compatible
		with the left labels of the next column.

		The remainder of the proof involves checking for all ranges of
		$\alpha_m,t_m$ that the resulting bottom, left, top, and right labels
		correspond to an actual tile in $\mathfrak K$.  The details of this
		are straightforward, and after substituting $t_{m+1}=2t_m\mod 1$
		or $t_m=3t_{m+1}\mod 1$ depending on the ratio $\alpha_m/\alpha_{m+1}$,
		it
		requires only examining what cases result from the choice of $\alpha_m$,$t_m$
		or $\alpha_m,t_{m+1}$.
	\end{proof}

	Proposition \ref{PropBasicConstructRepr} provides a partial converse to Proposition
	\ref{PropRobbie}. Proving that a tiling in $KC_{\Q^c}$,
	like tilings arising from a Basic Construction, can be expressed
	with one of $\Rf$ or $\Rc$, but never requires a mixture of both,
	is the bulk of the proof of Proposition \ref{PropBasicConstructRepr}.

	\begin{proposition}\label{PropBasicConstructRepr}
		If $y\in KC_{\Q^c}$ and $(\vec \alpha, \vec t)=(\alpha(\Phi(y)),t(\Phi(y)))$
		are the angle and phase vectors of $y$, then $y$ is the result of a
		Basic Construction arising from $(\vec \alpha, \vec t)$
		using either $\floor{\cdot}$ or $\ceil{\cdot}$.
	\end{proposition}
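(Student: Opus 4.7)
The plan is to represent each row of $y$ as a unique rotation sequence, verify the BC property for the resulting parameters, and then establish that the choice of $\floor{\cdot}$ versus $\ceil{\cdot}$ is uniform across all rows.

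By the preceding proposition on irrational-angle generalized Sturmian sequences, each row $r_i = \Phi((y)_i)$ has a uniquely determined phase $t_i$ with $r_i \in \{\Rf(\alpha_i, t_i), \Rc(\alpha_i, t_i)\}$; I will call row $i$ \emph{$\flat$-type} or \emph{$\sharp$-type} accordingly. These labels coincide except at ``bad'' phases where $n\alpha_i + t_i \in \Z$ for some (necessarily unique) $n$, at which index the two sequences differ by one. The angle relation $\alpha_{i+1}=\lambda_i\alpha_i$ of the BC property is immediate from Corollary \ref{PropAnglesRespect}. For the phase relation, I would telescope the multiplier identity $\lambda a + b = c + d$ over row $i$ to obtain $\lambda_i \sum_{k=1}^n a_k - \sum_{k=1}^n c_k = d_n - b_1$, substitute $\sum a_k = \floor{n\alpha_i + t_i} - \floor{t_i}$ and the analogous expression for $\sum c_k$, use $\alpha_{i+1} = \lambda_i \alpha_i$, and exploit the boundedness of $d_n - b_1$. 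A careful comparison of integer parts then forces the required congruence $t_{i+1} \equiv \lambda_i t_i \pmod 1$ in the $\lambda_i = 2$ case and the corresponding preimage relation in the $\lambda_i = 1/3$ case.

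The crux, as the paper signals just before the statement, is the uniformity of the $\flat/\sharp$ designation. Let $y_\flat$ and $y_\sharp$ denote the Basic Constructions from $(\vec\alpha, \vec t)$ using $\floor{\cdot}$ and $\ceil{\cdot}$ respectively; both lie in $KC$ by Proposition \ref{PropRobbie}. I would argue by contradiction: suppose row $i$ of $y$ is genuinely $\flat$-type (i.e., $\Rf(\alpha_i,t_i)\neq\Rc(\alpha_i,t_i)$) while row $i+1$ is genuinely $\sharp$-type. In the $\lambda_i = 2$ case the BC phase relation transports the bad index $n$ of row $i$, with $n\alpha_i + t_i \in \Z$, to the same bad index for row $i+1$, with $n\alpha_{i+1} + t_{i+1} \in \Z$. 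Computing the top label of tile $(i,n)$ in $y$ via the multiplier identity, using the $\flat$-bottom of row $i$ and the left/right edge values it forces at this index, yields exactly $\Rf(\alpha_{i+1},t_{i+1})_n$ rather than $\Rc(\alpha_{i+1},t_{i+1})_n$. Since the top of $(i,n)$ is the bottom of $(i+1,n)$ in $y$, and row $i+1$ is assumed $\sharp$-type, this is a contradiction. For $\lambda_i = 1/3$, only one of the three preimages of $t_{i+1}$ under tripling yields a bad phase, so the argument reduces to that single subcase and the same floor calculation applies. Iterating up and down forces a uniform designation.

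Finally, with the $\flat/\sharp$ choice fixed, matching the left and right labels of $y$ to the BC formula is a routine verification analogous to the one in Proposition \ref{PropRobbie}, since the multiplier identity determines these labels from the bottoms, tops, and one initial value. The principal obstacle is the uniformity step: it demands a careful computation of floor-function differences at degenerate indices and the observation that the multiplier identity at such an index strictly selects the $\Rf$-value in a $\flat$-row (and the $\Rc$-value in a $\sharp$-row), so that mismatched designations in adjacent rows cannot coexist.
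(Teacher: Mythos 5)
Your first half—extracting the phase vector row by row and telescoping the multiplier identity $\lambda a + b = c + d$ to force $t_{i+1}=2t_i \mod 1$ or $t_i = 3t_{i+1}\mod 1$—is essentially the paper's argument and is fine. The gap is in the step you yourself identify as the crux. You claim that at a bad index $n$ (where $n\alpha_i+t_i\in\Z$) the multiplier identity, applied to the $\flat$-bottom together with the left/right edge values it ``forces,'' pins the top label of tile $(i,n)$ to the $\Rf$-value. This is not true as a local statement. The side labels are carries that are not determined by the bottom label at that index, and the paper's own enumeration exhibits valid length-4 horizontal runs of type-$2.2$ stacked tiles—with internally consistent left/right edges, hence satisfying the multiplier identity at every position—whose bottom is the $\Rf$-straddle word $0010$ while the top is the $\Rc$-straddle word $1211$ (and likewise $0100$ with $1121$). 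So both candidate top labels survive any computation confined to a bounded window around the bad index; for type $2.2$ the contradiction only emerges after widening to length six, using that the bottom angle lies in $[1/3,1/2]$ to force a $1$ adjacent to the straddle word and then checking that neither stacked tile with bottom label $1$ and top label $2$ is compatible. Your local computation would succeed for type $\frac{1}{3}$ and type $2.1$ rows (where no misaligned pairs exist at all), but it fails for type $2.2$, which is exactly the hard case.

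A secondary omission: ``iterating up and down'' only propagates agreement through consecutive rows that genuinely require a choice of $\Rf$ versus $\Rc$. You therefore also need the paper's $3$-valuation argument that the set $B=\{j: n\alpha_j+t_j\in\Z\}$ of such rows is a contiguous block of integers (once a power of $3$ appears in the denominator it never leaves). Without contiguity, two separated blocks of bad rows could make opposite choices and your adjacency induction would never connect them.
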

	\begin{proof}
		First note that since $y\in KC_{\Q^c}$, the rows of $\Phi(y)$
		are Sturmian sequences and therefore rotation sequences (since
		rows in $\bar {\mathcal S}\backslash \mathcal S$ are excluded).  
		
		We will first show that $(\vec \alpha,\vec t)$
		satisfies the BC property.  Fix $k\in\Z$.  
		Since Corollary \ref{PropAnglesRespect}
		already shows that $\vec \alpha$ is determined by $f$ 
		and $\alpha_k$ (that is $\alpha_{k+i} = f^i(\alpha_k)$),
		we only need to show that either $t_{k+1}=2t_k\mod 1$
		or $t_k=3 t_{k+1}\mod 1$ in accordance with $\alpha_k$.  
		For simplicity, call $\alpha=\alpha_k$, $t=t_k$, and $t'=t_{k+1}$.
		Let $\lambda = \alpha_k/\alpha_{k+1}$ be the type of the $k$th row
		of $y$ and let $a_i,b_i,c_i,d_i$ be the bottom, left, top, and right
		labels of the $i$th tile in $(y)_{k}$.
		We divide the proof into two similar cases depending on $\lambda$.

		Case $\lambda=1/3$:
		We will
		assume the Sturmian sequences $(\Phi(y))_k$ and $(\Phi(y))_{k+1}$
		may both be represented using $\Rf$, but note that for every combination
		$(\Rf,\Rf)$, $(\Rf,\Rc)$, $(\Rc,\Rf)$, and $(\Rc,\Rc)$ of ways to represent
		$(\Phi(y))_k$ and $(\Phi(y))_{k+1}$, upon replacing $\floor{\cdot}$
		with $\ceil{\cdot}$ where appropriate, the same argument still works.
		By Corollary \ref{PropAnglesRespect}, $\alpha_{k+1} = \lambda\alpha_k=\lambda
		\alpha$,
		and so
		we have the following relationship for the bottom and top labels:
		\[
			a_i = \floor{i\alpha+t} - \floor{(i-1)\alpha+t}
			\qquad\text{and}\qquad
			c_i = \floor{\frac{i\alpha+3t'}{3}} - \floor{\frac{(i-1)\alpha+3t'}{3}}.
		\]
		Exploiting the telescoping nature of the Multiplier Property
		(shown in Equation \eqref{EqMult}) and summing from $i=n+1$ to $m$, we get
		\begin{equation}\label{EqPhase}
			(b_n-d_m) + \frac{1}{3}\left(
				\floor{\alpha m+t} - \floor{\alpha n+t}
			\right)
			=
			 \floor{\frac{\alpha m+3t'}{3}} - \floor{\frac{\alpha n+3t'}{3}}.
		\end{equation}

		Since $\alpha\notin \Q$, we can pick $n,m$ so that
		\[
			\frac{\alpha m+3t'}{3}=k_m + \varepsilon_m\qquad
			\text{and}\qquad
			\frac{\alpha n+3t'}{3}=k_n - \varepsilon_n
		\]
		where $k_m,k_n\in\Z$ and $\varepsilon_m,\varepsilon_n$ are arbitrarily
		small positive numbers.  Upon this choice, the right side of Equation
		\eqref{EqPhase} simplifies to $k_m-k_n+1$.
		By rearranging and substituting into Equation
		\eqref{EqPhase}, we get
		\[
			\floor{3(k_m+\varepsilon_m) + (t-3t')}
			-\floor{3(k_n-\varepsilon_n) + (t-3t')}
			=3(k_m-k_n)+3-3(b_n-d_m),
		\]
		but since $b_n-d_m$ is bounded above by $2/3$, we conclude
		\begin{equation}\label{EqContra}
			\floor{3(k_m+\varepsilon_m) + (t-3t')}
			-\floor{3(k_n-\varepsilon_n) + (t-3t')}
			\geq
			3(k_m-k_n)+1.
		\end{equation}
		If $(t-3t'\mod 1)=\gamma\neq 0$, choosing 
		$\varepsilon_n,\varepsilon_m<\!\!< \gamma$
		gives a contradiction (with the left hand side 
		of Equation \eqref{EqContra} yielding $3(k_m-k_n)$).
		Thus, $t=3t'\mod 1$.

		Case $\lambda=2$: Since this case is 
		nearly identical to the $\lambda=1/3$ case, we will omit the details,
		noting only that in this case
		the relationship between bottom labels and top 
		labels is reversed.  That is, in this case fix $\alpha=\alpha_{k+1}$
		and rewrite $\alpha_k=\alpha/2$. 
		
		We have now shown that $(\vec \alpha, \vec t)$ satisfies the BC property.
		To complete the proof and show that $y$ arises as a Basic Construction,
		we need to show that every Sturmian sequence in $\Phi(y)$ can
		be written with exclusively $\Rf$ or exclusively $\Rc$.

		Notice that since every component of $\vec \alpha$ is rationally 
		related to $\alpha_0$, we have either $\vec \alpha\in \Q^\Z$
		or $\vec \alpha\in (\Q^c)^\Z$.
		By assumption however,  $\vec \alpha\in (\Q^c)^\Z$ and so
		$n\vec \alpha + \vec t\in\Q^\Z$ for at most one $n$.
		Define
		\[
			r_{i,n} = n\alpha_i+t_i.
		\]
		Suppose that the $i$th row of $\Phi(y)$ requires
		$\Rf$ or $\Rc$ to be written as a rotation sequence.
		This implies that for some $n$ we have
		$r_{i,n}\in\Z$. Fix this $n$. By our observation that 
		$m\vec \alpha + \vec t\in\Q^\Z$ for at most one $m$, we may conclude
		that $r_{j,n'}\notin \Z$ for any $n'\neq n$ and $j\in \Z$.

		Let
		\[
			B=\{i:(\Phi(y))_i\text{ requires $\Rf$ or $\Rc$}\},
		\]
		and notice again that by the uniqueness of $n$ (with $n$ still
		being fixed as above),
		$B=\{j:r_{j,n}\in \Z\}$.
		We will now show that $B$ consists of a contiguous sequence
		of integers.  
		
		Exploiting the fact that $(\vec \alpha,\vec t)$
		satisfies the BC property, we may conclude $r_{j+1,n}=2r_{j,n}$
		or $r_{j+1,n}=\frac{1}{3}(r_{j,n}+i)$ where $i\in\{0,1,2\}$. This
		implies that if $|r_{j,n}|_3>0$ then $|r_{j+1,n}|_3>0$ where $|\cdot|_3$
		is the $3$-valuation.

		If $b\in B$ and $b+1\notin B$, 
		this means $r_{b,n}\in\Z$ but $r_{b+1,n}\notin \Z$ and so
		$
			|r_{b+1,n}|_3 > 0 $ (since multiplying by $2$ keeps
		us in $\Z$, the only way to leave $\Z$ is to divide by $3$).
		However, $|r_{b+1,n}|_3 > 0$ implies that $r_{b+i,n}\notin \Z$
		for all $i>0$.  We conclude that $B$ cannot contain any gaps.

		Since $B$ consists of a contiguous set
		of integers, it will complete the proof if we 
		show that there do not exist two adjacent rows
		in $\Phi(y)$ where one requires $\Rf$ and the other requires $\Rc$.
		We will conclude the proof by showing that the rules of the
		Kari-Culik tiling forbid such an occurrence.

		Suppose $s=\Rf(\alpha,t)\neq\Rc(\alpha,t)=s'$ and
		$\alpha\notin\Q$, and notice $s$ and $s'$ differ only by a transposition
		of two adjacent coordinates.  For simplicity, assume
		$s$ and $s'$ differ at coordinates $1$ and $2$
		and that $\alpha(s)\in[0,1]$ so that $s$ and $s'$ consist
		of the symbols $0$ and $1$. We then have
		\[
			s=\cdots s_{0}s_1s_2s_3 \cdots\qquad\text{and}\qquad
			s'=\cdots s_{0}s_2s_1s_3 \cdots,
		\]
		and in particular $s_1\neq s_2$.  Since $s$ and $s'$ are
		both valid Sturmian sequences, we may conclude that $s_{0}=s_3$,
		since if $s_{0}\neq s_3$ either $s$ or $s'$ would contain both
		a $1,1$ and a $0,0$
		(which is impossible in a Sturmian sequence \cite{fogg}).  Further, since $s$ requires $\floor{\cdot}$,
		we know $s_1 > s_2$.

		In general, we will call a length four word $w_{\alpha,t}=w_0 w_1 w_2 w_3$
		or $w_{\alpha,t}=w_0w_2w_1w_3$
		a \emph{straddle word} of a Sturmian sequence if 
		\[
			w_0 w_1 w_2 w_3=(\Rf(\alpha,t))_{i}^{i+3}
			\qquad \text{and}\qquad
			w_0 w_2 w_1 w_3=(\Rc(\alpha,t))_{i}^{i+3}
		\]
		for some $i$ (or vice versa) and $w_1\neq w_2$.
		The previous argument shows that if $w_{\alpha,t}$ is a straddle
		word, then $w_0=w_3$.  It also shows
		that if a Sturmian sequence requires $\Rf$ or $\Rc$, it necessarily
		contains a straddle word.

		Now, consider a row $r$ of $y$ where the sequence of bottom labels requires
		$\Rf$ and the top labels require $\Rc$ (or vice versa) and let $w^t$
		and $w^b$ be the straddle words for the labels on the top of $r$
		and the bottom of $r$ respectively.  Since the top sequence requires $\Rf$
		and the bottom sequence requires $\Rc$, we conclude that $w^t_1 > w^t_2$
		and $w^b_1 < w^b_2$ (or vice versa if the roles of $\floor{\cdot}$
		and $\ceil{\cdot}$ are reversed).  We will call a pair of
		straddle words like these, whose middle two symbols satisfy opposite
		inequalities, \emph{misaligned}.

		We will complete the proof by observing that
		misaligned straddle words cannot occur in $y$.

		By enumerating all pairs of length-four words $(w^t,w^b)$
		that arise as tops and corresponding bottoms of
		rows of type $2.1$, we see that out of the $64$
		possibilities, only four have the property that
		$w^t_1\neq w^t_2$ and $w^b_1\neq w^b_2$ (which is necessary
		to be a straddle word). Out of those four, none are misaligned.
		Similarly, in a row of type $\frac{1}{3}$, of the $96$ possibilities,
		$24$ differ in their middle symbols and out of those, none are misaligned
		straddle words.

		Now consider a row of type 2.2.  Out of the $128$ possible sequences of
		length $4$, 
		there are exactly two ways to obtain misaligned straddle words, namely:
		\begin{center}
			\includegraphics[width=5in]{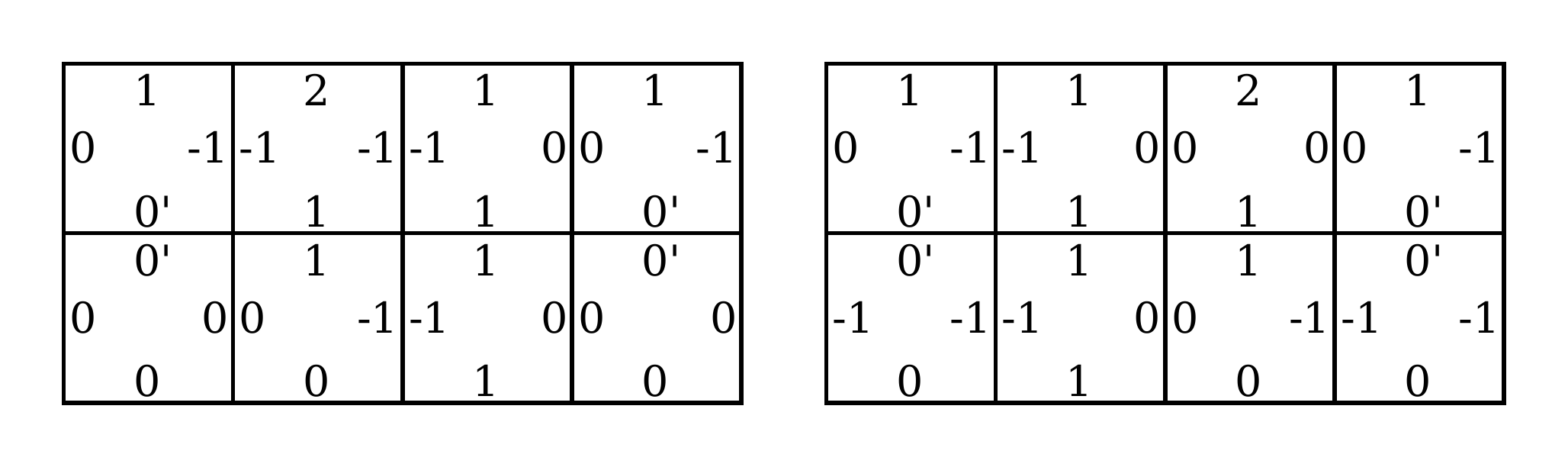}
		\end{center}
		This gives misaligned straddle pairs of
		$(w^t,w^b)=(1211,0010)$ and $(w^t,w^b)=(1121,0100)$.
		Since we are considering a type 2.2 row, the Sturmian angle
		for the sequence of bottoms must be in $[1/3,1/2]$.  Thus, there
		cannot be three $0$'s in a row.
		We therefore conclude the symbol before
		the word
		$w^b=0010$ must be a $1$ and the symbol
		after the word $w^b=0100$ must be a $1$. 
		Since we are considering $0010$ and $0100$ as straddle
		subwords of some pair of Sturmian sequences and these Sturmian
		sequences must agree everywhere except for a single transposition
		of symbols, we conclude $w^b=0010$ and $w^b=0100$ 
		must be subwords of $100101$ 
		and $101001$.  By a similar argument, 
		the top straddle words must be subwords
		of $211212$ and $212112$.  Thus, the 
		stacked tile to the left
		or right of the designated blocks must have a bottom label
		of $1$ and a top label of $2$.
		Inspecting the two stacked tiles with this property reveals that neither
		of them are compatible with the potential misaligned
		straddle words shown, and thus misaligned straddle words cannot
		appear in $y$.
	\end{proof}

\section{The Subset $KC$}
	Recall that $KC$ is the subset of Kari-Culik tilings whose bottom
	labels form generalized Sturmian sequences.  The following theorem allows
	us to focus on the Sturmian sequences made from 
	bottom labels of $KC$ as opposed to configurations on $\mathfrak K^{\Z^2}$.

	\begin{theorem}\label{PropPhiOneToOne}
		$\Phi|_{KC_{\Q^c}}$ is one-to-one and $\Phi$ is at most sixteen-to-one.
	\end{theorem}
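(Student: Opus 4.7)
The plan is to treat injectivity on $KC_{\Q^c}$ via Proposition \ref{PropBasicConstructRepr} and to bound the general preimage size by enumerating the remaining local ambiguities. First, suppose $y,y'\in KC_{\Q^c}$ satisfy $\Phi(y)=\Phi(y')$. The angle vector $\vec\alpha$ is recovered from the row averages of $\Phi(y)$, and since each $\alpha_i$ is irrational each Sturmian row has a unique phase, so $\vec t$ is also determined. By Proposition \ref{PropBasicConstructRepr}, each of $y$ and $y'$ is the Basic Construction from $(\vec\alpha,\vec t)$ using either $\floor{\cdot}$ or $\ceil{\cdot}$ throughout. These two choices produce identical bottom labels except at positions $(m,n)$ with $n\alpha_m+t_m\in\Z$, which by irrationality of $\alpha_m$ happens at most once per row, and they differ at such a position. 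Hence $\Phi(y)=\Phi(y')$ forces the same floor-versus-ceil choice in every row, yielding $y=y'$; the $0$-versus-$0'$ distinction in bottom labels is already determined by the angles via the Basic Construction rule.

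For the sixteen-to-one bound, observe that $f$ preserves rationality, so either every row of $\Phi(y)$ has irrational angle (covered above with a unique preimage) or every row has rational angle and is therefore periodic. In the rational case the angle vector is again uniquely recovered from row averages, but phases are non-unique and some rows may lie in $\bar{\mathcal S}\backslash\mathcal S$. A lift to a full tiling is specified by a single left-edge label per row: once a row's bottom labels, top labels, and one left label are fixed, the multiplier relation $\lambda a+b=c+d$ determines the rest of its left-right labels by telescoping. The number of admissible starting left labels keeping the row inside $\mathfrak K$ is small (at most three for type $\frac{1}{3}$ rows, at most two for type $2$ rows), and global compatibility---top-to-bottom matching across rows, $0/0'$ identification, and the unique possible occurrence of $\alpha_i=1$ permitted by Proposition \ref{PropfAperiodic}---couples these local choices. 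I would perform a direct case analysis using the tile families listed in Proposition \ref{PropTypesOfRows} to collapse the choices to at most $2^4=16$ global lifts.

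The main obstacle is precisely this sixteen-to-one count: although each row individually offers only a few local choices, one must show they do not multiply as the row index varies but instead propagate rigidly through the tiling. The rigid row classification together with the aperiodicity of the $f$-orbit should force per-row choices to be matched across adjacent rows, pinning the global degrees of freedom down to a small constant; carrying out this case analysis cleanly, by checking the pairs of compatible tiles in each of the four general types of Proposition \ref{PropTypesOfRows}, is the core technical task.
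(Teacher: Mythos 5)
Your strategy splits into two halves, and each has a genuine gap.

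For injectivity on $KC_{\Q^c}$: reducing to Proposition \ref{PropBasicConstructRepr} correctly pins down $(\vec\alpha,\vec t)$ from $\Phi(y)$ (irrational angle gives a unique phase), so the only remaining freedom is the choice of $\floor{\cdot}$ versus $\ceil{\cdot}$. But your claim that $\Phi(y)=\Phi(y')$ ``forces the same floor-versus-ceil choice in every row'' only has content when some $n\alpha_m+t_m$ actually lands in $\Z$, i.e.\ when $\Rf(\alpha_m,t_m)\neq\Rc(\alpha_m,t_m)$ for some row. In the generic case no integer value is ever hit, the two constructions have identical bottom labels in every row, and $\Phi$ cannot distinguish them; you then still owe an argument that the two constructions agree on the left and right edge labels as well, i.e.\ that a row of tiles is determined by its top and bottom label sequences. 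That last step is essentially the content of the theorem --- it is exactly what the paper proves by a row-by-row analysis --- so as written your reduction is close to circular in the generic case.

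For the sixteen-to-one bound: your outline --- a row is determined by its top and bottom label sequences together with one left label by telescoping the multiplier property, local ambiguities are finite per row, and aperiodicity of the $f$-orbit prevents them from recurring --- is the paper's approach. But the decisive step is the one you defer: showing via the tile families of Proposition \ref{PropTypesOfRows} that the pair of top and bottom label sequences fails to determine the row \emph{only} when $\alpha(r^b)\in\{1/3,1/2,1,3/2\}$, and that in each such case there are at most two preimages. Without isolating this finite exceptional set of angles, your count of ``at most three choices per type-$\frac{1}{3}$ row and two per type-$2$ row'' threatens to multiply over infinitely many rows; the factor $2^4=16$ comes precisely from four exceptional angles, each occurring at most once along the $f$-orbit by Proposition \ref{PropfAperiodic}, each contributing a two-fold ambiguity. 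You name only $\alpha_i=1$ and explicitly leave the transition-graph case analysis, which is the core of the paper's proof, as a to-do.
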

	\begin{proof}
		Fix $x\in KC$ and consider a row $r$ of $x$. Let $r^t$ be the Sturmian
		sequence formed by the top labels of $r$ and let $r^b$ be the Sturmian
		sequence formed by the bottom labels of $r$.  Further, let 
		$\alpha^t=\alpha(r^t)$ and $\alpha^b=\alpha(r^b)$, and if $w^t=(r^t)_i^j$
		is a subword of $r^t$, then $w^b=(r^b)_i^j$ is the corresponding
		subword of $r^b$. That is $w^b$ is the subword of $r^b$ whose indices
		are identical to the indices of $w^t$. Note that give a $w^t$ absent of
		the indices from which it came, $w^b$
		may not be uniquely defined.

		Notice that by the multiplier property (Equation \eqref{EqMultProperty}), 
		$r$ is uniquely determined by $(r^t,r^b)$ and a single left label
		of one of the tiles in $r$.

		Define $\Phi_2$ by $\Phi_2(r)=(r^t,r^b)$ and extend $\Phi_2$
		to work on subwords of $r$.  By our previous observation, if there
		is a subword $r'\subset r$ such that $\Phi_2^{-1}(\Phi_2(r'))$ contains
		a single element, then $r$ is uniquely determined by $(r^t,r^b)$.
		We will show that if $x\in KC_{\Q^c}$, this is always the case.  Moving
		forward, we analyze $r$ separately depending on its type.

		Case $r$ is of type $\frac{1}{3}$:
		In this case, we know $\alpha^t\in[1/3,2/3]$ and $\alpha^b\in[1,2]$.

		\begin{figure}[h!]
			\footnotesize
			\begin{center}
			\includegraphics[height=2in]{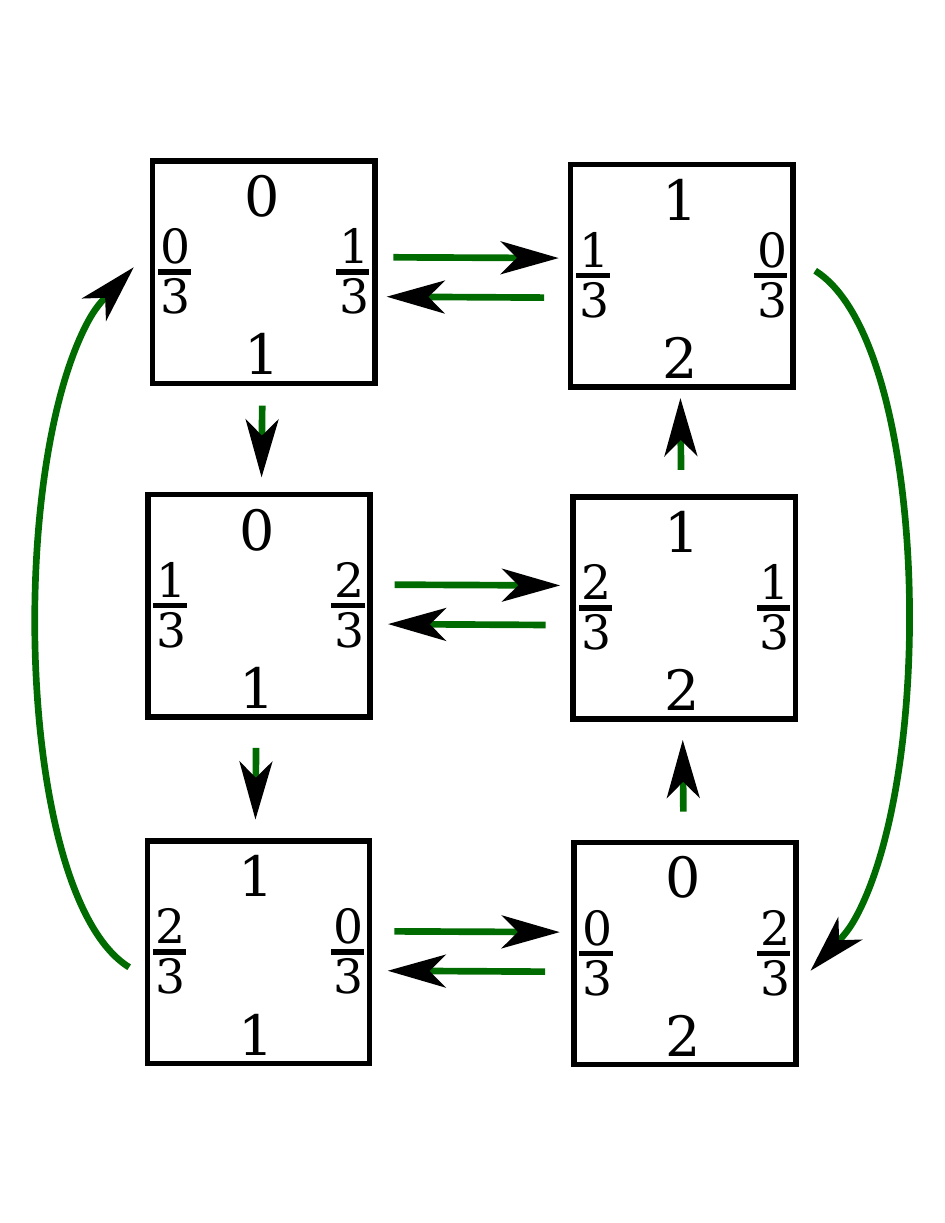}
			\end{center}
			\vspace{-3em}
			\caption{ \footnotesize Transition graph for type $\frac{1}{3}$
			tiles.}
			\label{FigType13TransitionGraph}
		\end{figure}

		Figure \ref{FigType13TransitionGraph} shows the transition graph moving
		left to right in a type $\frac{1}{3}$ row.  
		Notice that there is only one way for $11$ or $00$ to appear
		as top labels in a type $\frac{1}{3}$ row. In particular,
		if $w^t=11$ then $w^b=22$ and if $w^t=00$ then $w^b=11$
		and $|\Phi_2^{-1}(11,22)|=
		|\Phi_2^{-1}(00,11)|=1$.  Thus,
		if $r^t$ contains the word $11$ or $00$, $r$ is uniquely determined
		by $(r^t,r^b)$.
		
		If $r^t$ contains neither $11$ nor $00$, then $\alpha^t=1/2$ and
		$r^t=\cdots 101010\cdots $.  Analysing further, 
		if $w^t=01$ then $w^b\in\{12,21,11,22\}$.  We note that
		$|\Phi_2^{-1}(01,11)|=|\Phi_2^{-1}(01,22)|=|\Phi_2^{-1}(01,21)|=1$ and
		$|\Phi_2^{-1}(01,12)|=2$.  Thus, $\Phi$ on a row of type $\frac{1}{3}$
		is only non-unique if $\alpha^t=1/2$ which further implies $\alpha^b=3/2$.

		Case $r$ is of type $2.1$:
		In this case, we know $\alpha^t\in[1,2]$ and $\alpha^b\in[1/2,1]$.
		
		\begin{figure}[h!]
			\footnotesize
			\begin{center}
			\includegraphics[height=2in]{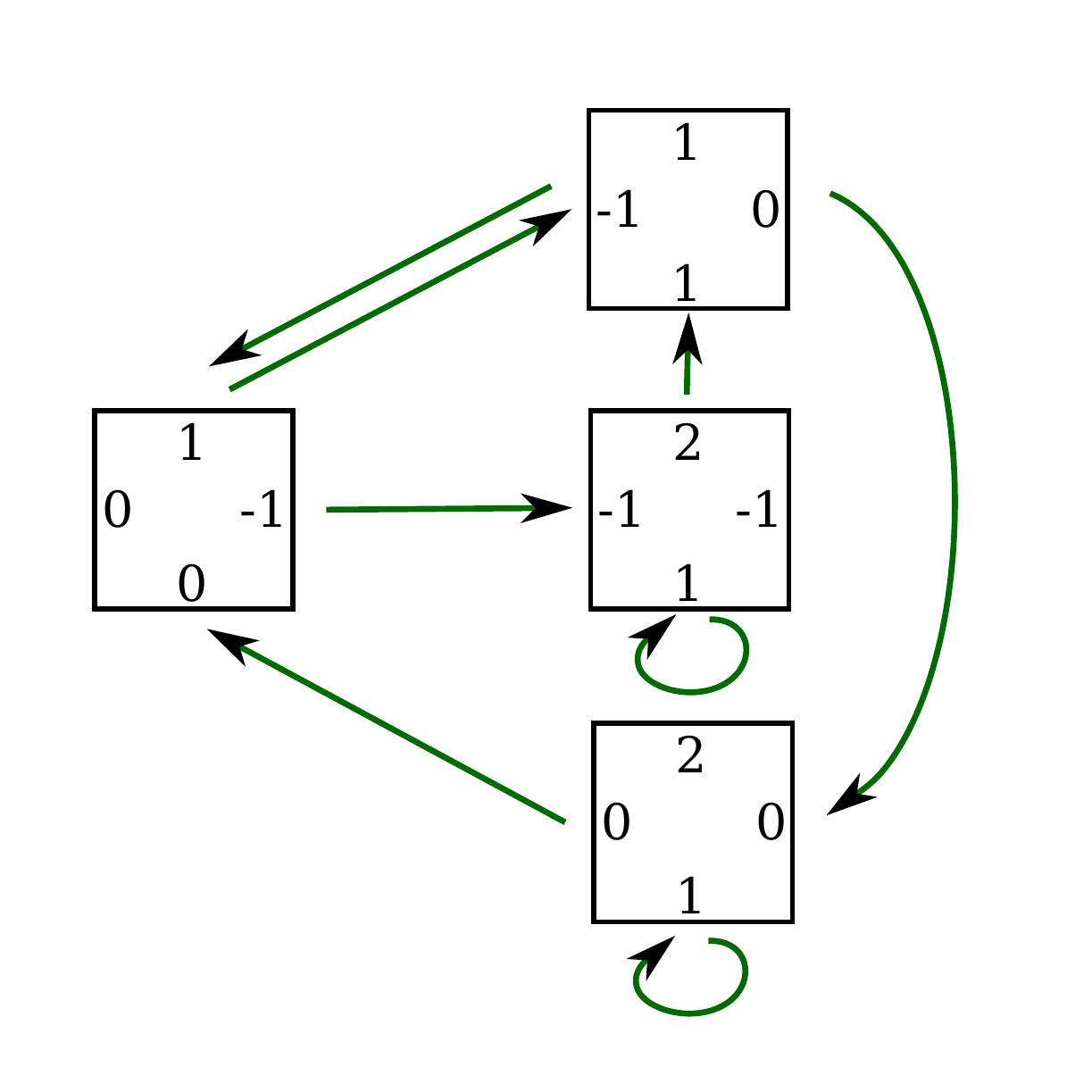}
			\end{center}
			\vspace{-3em}
			\caption{ \footnotesize Transition graph for a type $2.1$
			row.}
			\label{FigType21TransitionGraph}
		\end{figure}
		
		Figure \ref{FigType21TransitionGraph} shows the transition graph moving
		left to right in a type $2.1$ row.
		Notice that there is only one way
		a type $2.1$ row can contain $0$ as a bottom label.
		This means $r$ is uniquely determined by $(r^t,r^b)$ unless $\alpha^b=1$
		(since $\alpha^b<1$ implies a zero occurs in $r^b$) and consequently
		$r^b=\cdots 111\cdots$.
		If $w^b=1$ then $w^t\in\{1,2\}$ with
		$|\Phi_2^{-1}(1,1)|=1$ and
		$|\Phi_2^{-1}(2,1)|=2$.

		Case $r$ is of type $2.2$:
		In this case, we know $\alpha^t\in[4/3,2]$ and $\alpha^b\in[1/3,1/2]$
		(since we interpret our multiplier as $4$ in this case).

		Because $\alpha^b\in [1/3,1/2]$, we know that $r^b$ cannot
		contain $11$ (since if a rotation sequence contains $11$,
		then its angle must be strictly larger than $1/2$).  
		Therefore, it must contain 
		$100$ or $10101$ as a subword.
		Further, if $\alpha^b\in(1/3,1/2)$, $r^b$
		must contain $10100$ as a subword.  Let $w^b\in\{100,10101,10100\}$.
		Below is a list of all pairs $(w^t,w^b)$ such that
		$|\Phi_2^{-1}(w^t,w^b)|>1$.
		\begin{itemize}
			\item[] $(w^t,w^b)=(211,100)$,
			which can be obtained in exactly two ways, namely
			\vspace{-1em}
			\begin{center}
				\includegraphics[width=5in]{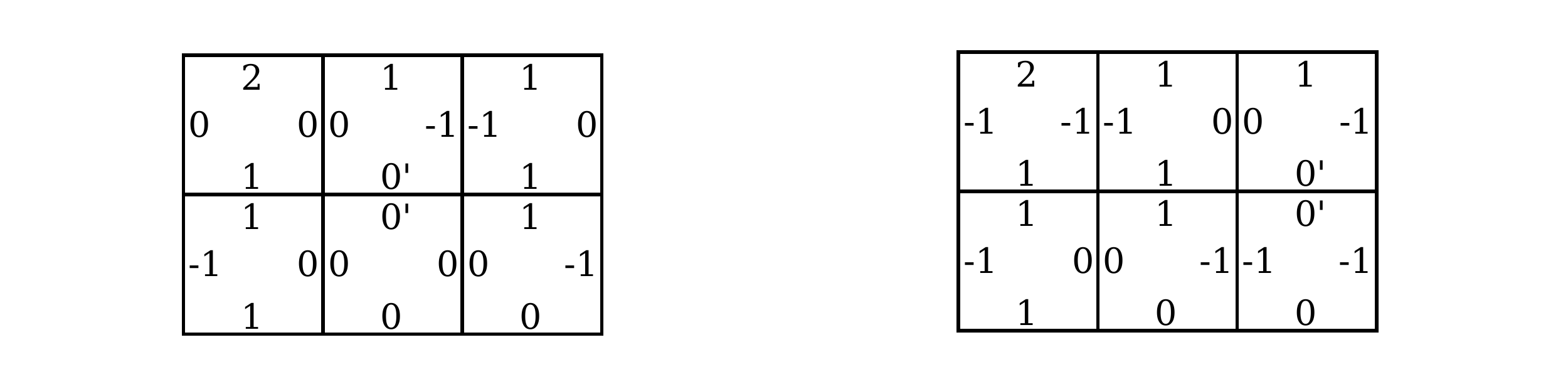}
			\end{center}
			\item[] $(w^t,w^b)=(22222,10101)$,
			which can be obtained in exactly two ways, namely
			\vspace{-1em}
			\begin{center}
				\includegraphics[width=5in]{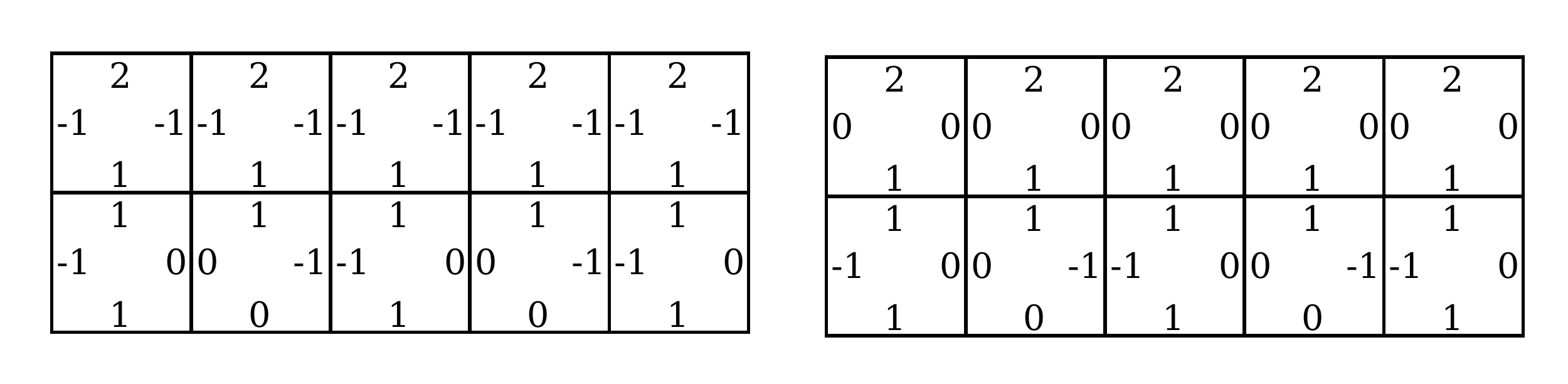}
			\end{center}
			\item[] and $(w^t,w^b)=(22211,10100)$,
			which can be obtained in exactly two ways, namely
			\vspace{-1em}
			\begin{center}
				\includegraphics[width=5in]{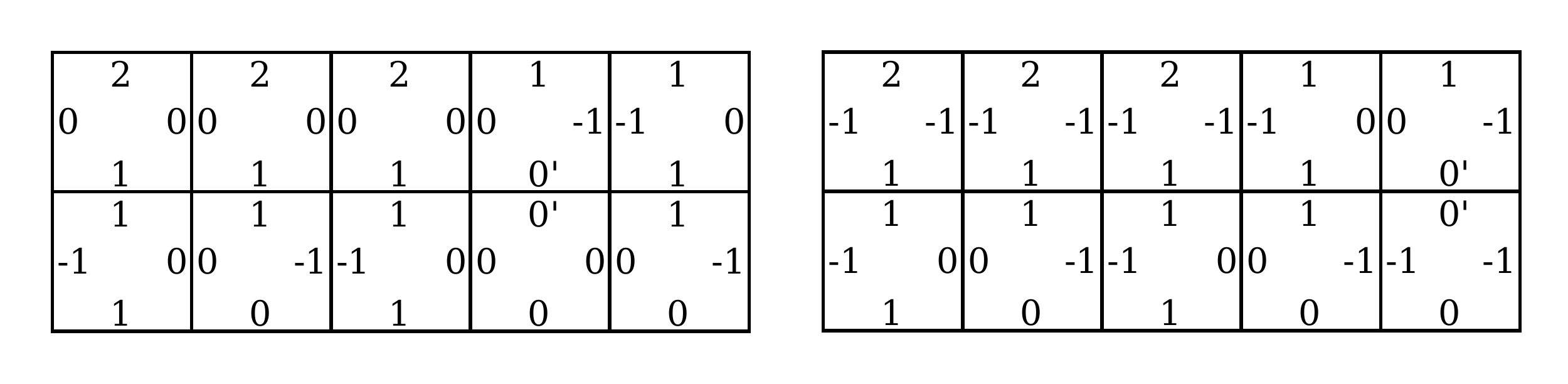}
			\end{center}
		\end{itemize}

		Considering the pair $(w^t,w^b)=(22211,10100)$, we see that
		$22211$ is not a generalized Sturmian sequence (since it
		contains both $11$ and $22$ as subwords), and so this situation
		never occurs.  This means if $\Phi_2$ is not invertible,
		$\alpha^b=1/3$, which corresponds to the
		first case, or $\alpha^b=1/2$ which corresponds to the second case.

		As a result of our case-by-case analysis, we see that if 
		$\alpha^b\notin\{1/3,1/2,1,3/2\}$, then $r$ is uniquely determined
		and so when restricted to $KC_{\Q^c}$, $\Phi$ is one-to-one.
		Further, since for $\alpha^b\in\{1/3,1/2,1,3/2\}$ we have
		$\Phi_2$ is at most two-to-one, we conclude in general that
		$\Phi$ is at most sixteen-to-one (since $f$ prevents $\alpha^b$
		from taking any particular value in $\{1/3,1/2,1,3/2\}$ more than once).
	\end{proof}

	In light of Theorem \ref{PropPhiOneToOne}, we will treat points in $KC$
	and points in $\Phi(KC)$ interchangeably, differentiating only when needed.

	Recall that $T:KC\to KC$ is the horizontal shift and $S:KC\to KC$ is the vertical
	shift.

	\begin{proposition}
		For $x\in KC$, we have
		\[
			\alpha(Sx) = f(\alpha(x)),
		\]
		where $f$ is applied component-wise and if $x\in KC_{\Q^c}$,
		\[
			t(Tx) = t(x) + \alpha(x).
		\]
	\end{proposition}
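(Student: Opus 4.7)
The plan is to split the proposition into its two identities and treat each by reducing it to a statement about individual rows, using results already established in the paper.

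For the first identity, the key observation is that since $S$ is the vertical shift, $(\Phi(Sx))_i = (\Phi(x))_{i+1}$, and so $\alpha(Sx)_i = \alpha((\Phi(x))_{i+1})$. Applying Corollary \ref{PropAnglesRespect} at the appropriate row identifies this with $f(\alpha((\Phi(x))_i))$ whenever $\alpha((\Phi(x))_i) \neq 1$, which is exactly the $i$th component of $f$ applied component-wise to $\alpha(x)$. The remark following that corollary, together with the aperiodicity furnished by Proposition \ref{PropfAperiodic}, guarantees that the value $1$ can occur in any single angle orbit at most once, so the identity holds wherever $f$ is unambiguously determined.

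For the second identity, I would first invoke Proposition \ref{PropBasicConstructRepr}: since $x \in KC_{\Q^c}$, every row has irrational angle, each row's phase is uniquely defined, and the whole tiling arises as a Basic Construction for $(\alpha(x), t(x))$ expressed uniformly by $\Rf$ or uniformly by $\Rc$. In particular, each row $(\Phi(x))_i$ equals $\Rf(\alpha_i, t_i)$ (respectively $\Rc(\alpha_i, t_i)$). Since $T$ shifts each row one step to the left, a one-line index substitution gives
\[
(T\Rf(\alpha,t))_j = \floor{(j+1)\alpha + t} - \floor{j\alpha + t} = (\Rf(\alpha, t+\alpha))_j,
\]
and the analogous identity holds for $\Rc$. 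Thus $T$ sends the phase of each row to $t_i + \alpha_i$ modulo $1$, and reading the identity componentwise in $\R/\Z$ yields $t(Tx) = t(x) + \alpha(x)$.

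The only subtlety I foresee is the at-most-one-index ambiguity at a row with $\alpha(x)_i = 1$ in the first identity, which is already disposed of by the remark following Corollary \ref{PropAnglesRespect}. Otherwise both claims are immediate once the right earlier results are invoked, so I expect no serious obstacle.
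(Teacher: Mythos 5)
Your proposal is correct and follows essentially the same route as the paper, whose proof is a one-line appeal to Corollary \ref{PropAnglesRespect} for the angle identity and to the definition of a rotation sequence (the index shift $\floor{(j+1)\alpha+t}-\floor{j\alpha+t}=(\Rf(\alpha,t+\alpha))_j$) for the phase identity. Your extra care about the $\alpha_i=1$ ambiguity and the invocation of Proposition \ref{PropBasicConstructRepr} to pin down well-defined phases are just explicit versions of what the paper leaves implicit.
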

	\begin{proof}
		This immediately follows from Corollary \ref{PropAnglesRespect} and
		the definition of a rotation sequence.
	\end{proof}

	We will now produce a parameterization of points in $KC_{\Q^c}$
	in a similar fashion to the way a Sturmian sequence may be parameterized
	by an angle, phase, and choice of floor or ceiling function.

	\begin{definition}
		Let \[
			\T = \varprojlim \R/(6^n\Z)
		\]
		be the inverse limit of the groups $\R/(6^n\Z)$ as $n\to\infty$.
	\end{definition}

	We view a point $t=(t_0,t_1,\ldots)\in \T$ as a sequence of real numbers
	endowed with the product topology and
	satisfying the consistency condition $t_i = t_{i+1}\mod 6^i$ with
	$\proj_k(t) = t_k$.  As such,
	we can define scalar-multiplication functions $M_a$ and $M_{1/a}$ for $a\in\{2,3\}$
	as follows:
	\[
		M_a(t_0,t_1,\ldots) = (at_0\mod 1,at_1\mod 6,at_2\mod 6^2,\ldots)
	\]
	and
	\[
		M_{1/a}(t_0,t_1,\ldots) = M_{6/a}(t_1/6,t_2/6,t_3/6,\ldots).
	\]

	\begin{proposition}\label{PropMWellDefined}
		$M_2,M_3,M_{1/2},M_{1/3}:\T\to\T$ are bijective homomorphisms.
	\end{proposition}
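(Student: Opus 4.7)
The plan is to establish the three properties (well-definedness, homomorphism, bijectivity) for $M_a$ and $M_{1/a}$ in turn, treating $M_a$ first and then using it as a building block for $M_{1/a}$.

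For $M_a$ with $a\in\{2,3\}$, I would first observe that the $i$-th coordinate map $t_i\mapsto at_i\bmod 6^i$ descends from $\R$ to a homomorphism $\R/(6^i\Z)\to\R/(6^i\Z)$, since $a\cdot 6^i\Z\subseteq 6^i\Z$. I would then check that the consistency condition is preserved: if $t_i\equiv t_{i+1}\pmod{6^i}$, then $at_i\equiv at_{i+1}\pmod{a\cdot 6^i}$, hence also modulo $6^i$. This shows that $M_a\colon\T\to\T$ is a well-defined group homomorphism in the product topology.

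For $M_{1/a}$, the conceptual subtlety is that division by $a$ is not well-defined on any single $\R/(6^i\Z)$, but the inverse limit provides enough extra room to invert multiplication by $a$ via a shift. I would first check that the shift-and-divide map $(t_0,t_1,t_2,\ldots)\mapsto(t_1/6,t_2/6,t_3/6,\ldots)$ is a well-defined homomorphism $\T\to\T$: division by $6$ descends to a homomorphism $\R/(6^{i+1}\Z)\to\R/(6^i\Z)$, and the consistency $t_{i+1}\equiv t_{i+2}\pmod{6^{i+1}}$ divides through to $t_{i+1}/6\equiv t_{i+2}/6\pmod{6^i}$. Since $6/a\in\{2,3\}$, the map $M_{6/a}$ is already known to be a well-defined homomorphism, and $M_{1/a}$ is its composition with the shift-and-divide map.

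Bijectivity then follows from verifying $M_a\circ M_{1/a}=\id=M_{1/a}\circ M_a$ by direct coordinate computation. For the first composition, the $i$-th coordinate of $(t_0,t_1,\ldots)$ is sent to $a\cdot(6/a)\cdot(t_{i+1}/6)=t_{i+1}$, which equals $t_i$ in $\R/(6^i\Z)$ by consistency. The other composition is analogous, using that the scalars $a$ and $6/a$ combine in the same way regardless of which one is applied first. The main obstacle is essentially just careful bookkeeping of indices; the conceptual content is that $\T$ has been set up precisely so that the division by $6$ encoded in the shift, together with the rescaling by $6/a$, cancels multiplication by $a$.
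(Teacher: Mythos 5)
Your proof is correct, and it fills in precisely the argument the paper leaves to the reader (the paper states only that the proposition is "straightforward, relying on the fact that $2$ and $3$ divide $6$" --- which is exactly the fact you use, via $a\cdot 6^i\Z\subseteq 6^i\Z$ and $6/a\in\{2,3\}$). The coordinate computations for well-definedness, preservation of the consistency condition, and the two compositions $M_a\circ M_{1/a}=M_{1/a}\circ M_a=\id$ all check out.
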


	The proof of Proposition 
	\ref{PropMWellDefined} is straightforward, relying on the fact
	that $2$ and $3$ divide $6$.  From now on, for $t\in \T$ we 
	may write $at$ or $t/a$ instead 
	of $M_at$ and $M_{1/a}t$.  Further, for $r\in \R$,
	we may define scalar addition $A_r:\T\to\T$
	by
	\[
		A_r(t_0,t_1,\ldots) = (t_0+r\mod 1,t_1+r\mod 6,t_2+r\mod 6^2,\ldots)
	\]
	and we may write $t+r$ instead of $A_r(t)$.

	\begin{notation}
		Extend $f$ to a function $\hat f:[1/3,2]\times\T\to [1/3,2]\times\T$ by
		\[
			\hat f(\alpha,t) = 
				\left\{\begin{array}{cl}
						(2\alpha,2t) &\text{ if } \alpha \in[1/3,1)\\
						(\alpha/3,t/3) &\text{ if } \alpha\in [1,2]
				\end{array}\right. .
		\]
	\end{notation}

	Notice that $\hat f:[1/3,2]\times \T\to[1/3,2]\times \T$ is a bijection.
	Morally, we will show that $[1/3,2]\times \T$ is a parameterization
	of $KC$.  However, since trouble arises for generalized Sturmian sequences
	with rational angles and Sturmian sequences whose phase vector
	contains zero, we focus our attention to the sets
	$([1/3,2]\backslash \Q)\times\T\times\{\Rf,\Rc\}$ 
	and $KC_{\Q^c}$.


	\begin{lemma}\label{PropWExists}
		Let $\Orb_1$ be the orbit of $1$ under $f$, and let 
		\[X=\{(\vec \alpha, \vec t): (\vec \alpha, \vec t)\text{
		satisfies the BC property and }1\notin\vec \alpha\}.\]
		There exists a bijection $W:([1/3,2]\backslash\Orb_1)\times\T\to X$ that respects
		the dynamical relationships.  That is, for $(\alpha,t)\in([1/3,2]\backslash\Orb_1)\times \T$ such
		that $W(\alpha, t)=(\vec \alpha, \vec t)$, we have
		\[
			W(\alpha, t+\alpha) = (\vec \alpha, \vec t+\vec \alpha\mod 1)\qquad\text{and}\qquad
			W\circ \hat f(\alpha, t)=(\sigma(\vec \alpha), \sigma(\vec t)).
		\]
	\end{lemma}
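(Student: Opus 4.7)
The plan is to define $W$ explicitly, using $f$ on the angle coordinate and the $\T$-operations $M_2, M_3, M_{1/2}, M_{1/3}$ of Proposition~\ref{PropMWellDefined} on the phase coordinate, then verify the BC property, equivariance, and bijectivity in turn. Given $(\alpha,t)\in([1/3,2]\setminus\Orb_1)\times\T$, I set $\alpha_i = f^i(\alpha)$ for all $i\in\Z$; this is unambiguous because removing $\Orb_1$ kills the only obstruction to invertibility of $f$ (by Proposition~\ref{PropfAperiodic}, $f$ is conjugate to an irrational circle rotation). I then build an auxiliary sequence $s_i\in\T$ by $s_0 = t$, $s_{i+1} = M_2 s_i$ when $\alpha_i\in[1/3,1)$, and $s_{i+1} = M_{1/3} s_i$ when $\alpha_i\in[1,2]$, extending to $i<0$ using the inverses $M_{1/2}$ and $M_3$. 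Setting $t_i = \proj_0(s_i)\in[0,1)$ and $W(\alpha,t) = (\vec\alpha,\vec t)$ completes the construction.

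Membership $W(\alpha,t)\in X$ reduces to the BC property, whose angle half is the definition of $f$ and whose phase half follows from two identities on $\T$: $\proj_0(M_2 s) = 2\proj_0(s) \mod 1$, immediate from the definition of $M_2$, and $\proj_0(s) = 3\proj_0(M_{1/3} s) \mod 1$, which uses only the consistency relation $\proj_0(s) = \proj_1(s) \mod 1$. Both equivariance relations then drop out of the construction almost for free: the relation $W\circ\hat f(\alpha,t) = (\sigma\vec\alpha,\sigma\vec t)$ holds because $\hat f$ simply reindexes both the $\alpha_i$- and $s_i$-recursions by one, while $W(\alpha,t+\alpha) = (\vec\alpha,\vec t+\vec\alpha \mod 1)$ follows from the $\R$-linearity of the $M$-operations on $\T$, which gives $s_i(t+\alpha) = s_i(t)+\alpha_i$ by induction (the same sequence of operations generates $\alpha_i$ from $\alpha$).

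The main obstacle is the bijectivity of $W$. Recovering $\alpha = \alpha_0$ is immediate, so the real work is to reconstruct $t\in\T$ from the scalars $(t_i)$. Since $M_2$ and $M_{1/3}$ commute on $\T$ (they descend from scalar multiplication in $\R$), writing $a_i, b_i$ for the number of $M_2$- and $M_{1/3}$-steps from index $0$ to index $i>0$ gives $s_i = M_2^{a_i}M_{1/3}^{b_i} s_0$ and, after a short calculation, $t_i = 2^{a_i}\proj_{b_i}(s_0)/3^{b_i} \mod 1$, which pins down $\proj_{b_i}(s_0)$ modulo $3^{b_i}$. The symmetric computation for $i<0$, with $a_i',b_i'$ counting the inverse operations, pins down $\proj_{a_i'}(s_0)$ modulo $2^{a_i'}$. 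By Proposition~\ref{PropfAperiodic}, every $n\geq 0$ is realized both as some $b_i$ with $i>0$ and as some $a_j'$ with $j<0$, so the Chinese Remainder Theorem determines $\proj_n(s_0)$ modulo $6^n$. The delicate step — checking that different indices at the same level yield the same residue, and that the residues at successive levels fit together into a genuine element of $\varprojlim\R/(6^n\Z)$ — is the main technical content, and would be handled by induction on $n$ using the BC property together with Proposition~\ref{PropMWellDefined}.
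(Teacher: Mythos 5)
Your proposal is correct and follows essentially the same route as the paper: you define $W$ by projecting the $\hat f$-orbit onto the zeroth coordinate of $\T$, verify the BC property and equivariance directly from the recursion, and recover the inverse by reading off the mod-$3^n$ data from the forward ($M_{1/3}$) steps and the mod-$2^n$ data from the backward ($M_{1/2}$) steps before gluing with the Chinese Remainder Theorem. The consistency induction you defer at the end is precisely the computation the paper carries out in detail in its construction of $z^{(3)}$ and $z^{(2)}$, so you have correctly identified both the strategy and its main technical content.
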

	\begin{proof}
		Defining $W$ is straightforward.  Fix $(\alpha,t)\in([1/3,2]\backslash\Orb_1)
		\times \T$ and define $W(\alpha,t) = (\vec \alpha, \vec t)$ where
		$(\alpha_i,t_i) = 
		(\id\times \proj_0)\circ \hat f^i(\alpha,t)$.
		The definition of $\hat f$ acting on $([1/3,2]\backslash\Orb_1)\times\T$
		ensures that $(\vec \alpha, \vec t)$ satisfies the BC property
		and respects the desired dynamical relationships.  
		
		$W$ is clearly one-to-one in the first coordinate.  Further, if
		$t,t'\in \T$ with $t\neq t'$, we have that $\proj_j t\neq\proj_j t'$
		for some $j$.  From this we deduce that $\proj_0\circ \hat f^{j'}(t)\neq \proj_0\circ \hat f^{j'}(t')$
		for some $j'\geq j$,
		and so $W$ is one-to-one.
		After the construction of $W^{-1}$,
		it will be evident that it is onto.

		Constructing $W^{-1}$ is slightly more difficult.
		Fix $(\vec \alpha, \vec t)\in
		X$.  Let $\Lambda_i = \alpha_i/\alpha_0$.  Let $|\cdot|_2$
		and $|\cdot|_3$ be the 2-valuation and 3-valuation respectively,
		and note that $\Lambda_i$ is always rational and so its $2$ and $3$ valuations
		are defined.

		Let $j(i) = \min\{n\geq 0: |\Lambda_n|_3 = i\}$ and 
		define the subsequences $(t^{(3)}_i)_{i=0}^\infty$ 
		and $(\Lambda^{(3)}_i)_{i=0}^\infty$ of $(t_i)_{i=0}^\infty$
		and $(\Lambda_i)_{i=0}^\infty$
		by 
		\[
			t^{(3)}_i = t_{j(i)}\qquad\text{and}\qquad\Lambda^{(3)}_i=\Lambda_{j(i)}.
		\]
		Similarly, let $j'(i) = \min\{n\geq 0:|\Lambda_{-n}|_2 = i\}$
		and define the subsequences $(t^{(2)}_i)_{i=0}^\infty$
		and  $(\Lambda^{(2)}_i)_{i=0}^\infty$
		by $t^{(2)}_i = t_{-j'(i)}$ 
		and $\Lambda^{(2)}_i=\Lambda_{-j'(i)}$.  We've constructed
		$t^{(2)}$ and $t^{(3)}$ to be the values of $\vec t$ corresponding to where 
		we ``divide $\alpha$ by 2'' or ``divide $\alpha$ by 3''
		respectively.

		To construct $W^{-1}(\vec \alpha,\vec t)$, first consider the point 
		$z^{(3)}=(z^{(3)}_0,z^{(3)}_1,\ldots)\in \varprojlim \R/(3^n\Z)$
		defined inductively in the following way.
		Let $z^{(3)}_0 = t^{(3)}_0=t^{(2)}_0$.  Fix $j$ and suppose for all $i<j$ we
		have
		\begin{align*}
			\Lambda^{(3)}_i z_i^{(3)} & = t^{(3)}_i\mod 1.
		\end{align*}
		Let $p$ be such that $\Lambda^{(3)}_j=\frac{2^p}{3}\Lambda^{(3)}_{j-1}$.
		We now have that $3t^{(3)}_j=2^pt^{(3)}_{j-1}\mod 1$ and so along
		with our induction hypothesis there exist unique $r',r''\in\{0,1,2\}$ so
		\begin{align*}
			3t^{(3)}_j = 2^pt^{(3)}_{j-1}+r' &\mod 3\\
			\Lambda^{(3)}_{j-1}z^{(3)}_{j-1}-t^{(3)}_{j-1} =r'' &\mod 3.
		\end{align*}
		Define $z^{(3)}_j=z^{(3)}_{j-1}+r3^{j-1}$ where $r\in\{0,1,2\}$ 
		is the unique solution to $2^p r'' - r' + r2^p\Lambda^{(3)}_{j-1}3^{j-1}=0\mod 3$.
		Note that since $2^p\Lambda^{(3)}_{j-1}3^{j-1}\in\Z$ and contains no
		multiples of $3$, such an $r$ always exists.
		By construction, $z^{(3)}_{j}$ satisfies $z^{(3)}_{j-1}=z^{(3)}_j\mod 3^{j-1}$.
		We will now verify that $\Lambda^{(3)}_jz^{(3)}_j-t^{(3)}_j=0\mod 1$.
		
		Substituting, we see 
		\[
			\Lambda^{(3)}_jz^{(3)}_j-t^{(3)}_j = 
			\tfrac{2^p}{3}\Lambda^{(3)}_{j-1}z^{(3)}_j-t^{(3)}_j =	
			\tfrac{2^p}{3}\Lambda^{(3)}_{j-1}(z^{(3)}_{j-1}+r3^{j-1})-t^{(3)}_j.
		\]
		Finally, multiplying by $3$, we have	
		\[
			2^p\Lambda^{(3)}_{j-1}(z^{(3)}_{j-1}+r3^{j-1})-3t^{(3)}_j
			=2^p\Lambda^{(3)}_{j-1}(z^{(3)}_{j-1}+r3^{j-1})-2^pt^{(3)}_{j-1}-r'
			\mod 3
		\]  \[
			= 2^p\Big(
			\Lambda^{(3)}_{j-1}z^{(3)}_{j-1}-t^{(3)}_{j-1}+r\Lambda^{(3)}_{j-1}3^{j-1}
			\Big)-r' = 
			2^p r'' - r' + r2^p\Lambda^{(3)}_{j-1}3^{j-1}=0\mod 3
		\]
		as desired.  

		We have shown that $z^{(3)}$ exists and is unique. In an analogous way,
		construct $z^{(2)}\in\varprojlim\R/(2^n\Z)$.
		Finally,
		since $z^{(3)}_0=z^{(2)}_0$, by the Chinese remainder theorem
		we may produce $z=(z_0,z_1,\ldots)\in \T$ 
		such that $z^{(3)}_i=z_i\mod 3^i$ and $z_i^{(2)}=z_i\mod 2^i$.

		It is worth noting now that by construction, $z_i$ is the unique
		simultaneous solution to
		\begin{align*}
			\Lambda^{(3)}_i z_i & = t^{(3)}_i\mod 1\\
			\Lambda^{(2)}_i z_i & = t^{(2)}_i\mod 1
		\end{align*}
		where $z_i\in\R/(6^i\Z)$ and $z_{i-1} = z_{i}\mod 6^{i-1}$.  

		Having established existence and uniqueness of $z$,
		we may define $W^{-1}(\vec\alpha,\vec t)
		= (\alpha_0, z)$.  The fact that $W^{-1}$ is an inverse is
		now immediate by construction:
		if $|\Lambda_j|_3 = k\geq 0$, then $\Lambda_j\proj_k(z) = t_j^{(3)}\mod 1$,
		and if 
		$|\Lambda_j|_2 = k\geq 0$, then $\Lambda_j\proj_k(z) = t_{-j}^{(2)}\mod 1$.
	\end{proof}
	
	We restricted the domain of $W$ to $([1/3,2]\backslash\Orb_1)\times \T$ instead
	of $[1/3,2]\times \T$ because the function $f$ could be defined so that $f(1)=1/3$
	or $f(1)=2$, and both ways would be consistent with the rules of the Kari-Culik tilings.
	As such, this presents a small obstruction to $W$ being a bijection on $[1/3,2]\times \T$.

	To allow for a clearer statement of Proposition \ref{PropEquivParameters},
	we will extend notation so that $\hat f(\alpha, t,R)=(\alpha',t',R)$
	where $(\alpha',t')=\hat f(\alpha, t)$.

	\begin{proposition}\label{PropEquivParameters}
		There exists maps $K:[1/3,2]\times \T\times\{\Rf,\Rc\}\to KC$
		and $K':KC_{\Q^c}\to[1/3,2]\times \T\times\{\Rf,\Rc\}$ so that $K\circ K'=\id$
		and for any $y\in KC_{\Q^c}$,
		$K'$ satisfies the following relationships:
		\begin{align*}
			K'(y)&=(\alpha,t, R)\\
			K'(Ty)&=(\alpha,t+\alpha, R)\\
			K'(Sy)&=\hat f(\alpha,t, R)
		\end{align*}
		for some $(\alpha,t,R)\in[1/3,2]\times\mathcal T\times\{\Rf,\Rc\}$.
	\end{proposition}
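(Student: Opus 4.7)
The plan is to assemble three previously established ingredients. Proposition \ref{PropBasicConstructRepr} guarantees that every $y\in KC_{\Q^c}$ arises as a Basic Construction from its angle and phase vectors $(\vec\alpha,\vec t)=(\alpha(\Phi(y)),t(\Phi(y)))$, using $\Rf$ throughout or $\Rc$ throughout; I will denote this uniform choice by $R(y)\in\{\Rf,\Rc\}$. Proposition \ref{PropRobbie} says conversely that any BC-pair together with a choice of $\Rf$ or $\Rc$ yields an element of $KC$. Lemma \ref{PropWExists} supplies a dynamically equivariant bijection $W:([1/3,2]\setminus\Orb_1)\times\T\to X$ onto the space of BC-pairs with $1\notin\vec\alpha$.

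Given these, I define $K'(y)=(\alpha_0,z,R(y))$, where $(\alpha_0,z)=W^{-1}(\vec\alpha,\vec t)$. This is well-defined because every $\alpha_i$ is irrational, so in particular $\alpha_0\notin\Orb_1$ and the input lies in the domain of $W^{-1}$. For $K$, given $(\alpha,t,R)$ with $\alpha\notin\Orb_1$ I set $(\vec\alpha,\vec t)=W(\alpha,t)$ and let $K(\alpha,t,R)$ be the Basic Construction with those parameters using $R$, which lies in $KC$ by Proposition \ref{PropRobbie}. On the countable exceptional set $\alpha\in\Orb_1$ I extend $K$ by fixing any convention (say $f(1)=2$) and running the same procedure; the proposition only demands $K\circ K'=\id$, not that $K$ be injective. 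The identity $K\circ K'=\id$ on $KC_{\Q^c}$ is then immediate, since $K'$ records exactly the parameters of the unique Basic Construction producing $y$, and $K$ reassembles that Basic Construction.

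For the three dynamical relationships, I will invoke the proposition immediately preceding the statement, which gives $\alpha(Sy)=f(\alpha(y))$ componentwise and $t(Ty)=t(y)+\alpha(y)$, together with the fact that $\Phi$ intertwines $T$ and $S$ with the corresponding shift operations on vectors of rows. The equivariance clause of Lemma \ref{PropWExists} then translates these directly into $K'(Ty)=(\alpha,t+\alpha,R)$ and $K'(Sy)=\hat f(\alpha,t,R)$. Invariance of $R$ under $T$ is clear since $\Rf(\alpha,\cdot)$ is transformed into a shift of itself by a horizontal translation (and likewise for $\Rc$), and invariance under $S$ is clear since a vertical shift merely slides the index of rows in a Basic Construction whose choice of rotation function was uniform by Proposition \ref{PropBasicConstructRepr}.

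The serious content has already been done in Proposition \ref{PropBasicConstructRepr} and Lemma \ref{PropWExists}; what remains is bookkeeping. The main point to watch is the $\{\Rf,\Rc\}$ marker — the argument must confirm that this label is genuinely invariant under both shifts, so that it represents a well-defined coordinate on the parameter space and not something that can silently flip under $T$ or $S$.
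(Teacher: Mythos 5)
Your proposal is correct and follows essentially the same route as the paper: both define $K$ as the Basic Construction map precomposed with $W$ from Lemma \ref{PropWExists} (extended by convention over $\Orb_1$), and $K'$ as $W^{-1}$ composed with the parameter-extraction map furnished by Proposition \ref{PropBasicConstructRepr}, with the dynamical relations inherited from the equivariance clause of Lemma \ref{PropWExists}. Your extra attention to the invariance of the $\{\Rf,\Rc\}$ marker is a reasonable bookkeeping check that the paper leaves implicit.
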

	\begin{proof}
		This proposition is a corollary of Lemma \ref{PropWExists}.

		Let $X=\{(\vec \alpha, \vec t): (\vec \alpha, \vec t)\text{
		satisfies the BC property}\}$.
		Observe that by Proposition \ref{PropBasicConstructRepr},
		we have explicit maps $A:X\times\{\Rf,\Rc\}\to KC$ and
		$A':KC_{\Q^c}\to X\times \{\Rf,\Rc\}$ so that $A\circ A'=\id$.
		Projecting onto the first coordinate,
		 we also have if $A'(x) = (\vec\alpha,\vec t)$,
		then $A'(Tx) = (\vec\alpha, \vec t+\vec \alpha\mod \vec 1)$
		and $A'(Sx) = (\sigma(\vec \alpha), \sigma(\vec t))$, where $\sigma$ is 
		the shift on bi-infinite vectors.

		Lemma \ref{PropWExists}
		gives us 
		an invertible map
		$W:([1/3,2]\backslash\Orb_1)\times \T\to X$ that respects the dynamical relationships.
		Without changing notation, extend $W$ (non-bijectively) to a map $W:[1/3,2]\times \T\to X$
		in a way that respects the dynamical relationships.  We may now define $K=A\circ W$.
		Finally, since the first component of the image of $A'$ avoids $\Orb_1$, we may define
		$K'=W^{-1}\circ A'$.
	\end{proof}

	Where convenient, we will think of $K:[1/3,2]\times\T\to KC$ and
	$K':KC_{\Q^c}\to [1/3,2]\times \T$ without the notational burden
	of keeping track of
	the choice of $\Rf$ or $\Rc$.  Further, since points in $KC_{\Q^c}$ requiring
	$\Rc$ to be expressed may be approximated by points in $KC_{\Q^c}$ 
	requiring only $\Rf$ (and vice versa), the choice of $\Rf$ and $\Rc$ may be
	safely ignored.
	Considering the relationships outlined in the proof of
	Proposition \ref{PropEquivParameters}, we may now deduce the following
	theorem.

	\begin{theorem*}[Theorem \ref{ThmKCConjSkew}]
		$KC_{\Q^c}$ is a skew product.  That is, if $K'$ is 
		the map defined in Proposition \ref{PropEquivParameters}
		and $y\in KC_{\Q^c}$, we have the following relationships:
		\begin{align*}
			K'(y)&=(\alpha,t)\\
			K'(Ty)&=(\alpha,t+\alpha)\\
			K'(Sy)&=\hat f(\alpha,t).
		\end{align*}
	\end{theorem*}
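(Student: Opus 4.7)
The plan is to derive this theorem as an immediate corollary of Proposition \ref{PropEquivParameters}. That proposition already constructs a map $K':KC_{\Q^c}\to[1/3,2]\times\T\times\{\Rf,\Rc\}$ satisfying equivariance identities of exactly the form stated here, with the sole difference being the additional third coordinate $R\in\{\Rf,\Rc\}$ recording the floor/ceiling choice. All that remains is to verify that this extra coordinate can be discarded without destroying the skew-product structure.

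The key observation I would make is that both dynamical actions fix the $R$-coordinate. For the translation $T$ this is explicit in the formula $K'(Ty)=(\alpha,t+\alpha,R)$, and for the vertical shift $S$ it is built directly into the extended notation $\hat f(\alpha,t,R)=(\alpha',t',R)$ introduced just above Proposition \ref{PropEquivParameters}. Composing $K'$ with the projection $\pi:[1/3,2]\times\T\times\{\Rf,\Rc\}\to[1/3,2]\times\T$ that drops the third coordinate, and then relabeling the composition as $K'$ in accordance with the convention announced immediately before the theorem, each of the three displayed identities $K'(y)=(\alpha,t)$, $K'(Ty)=(\alpha,t+\alpha)$, and $K'(Sy)=\hat f(\alpha,t)$ follows term-by-term from its counterpart in Proposition \ref{PropEquivParameters}.

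The only subtle point, and the location of whatever obstacle there is, is justifying that discarding the $R$-coordinate is dynamically harmless rather than a genuine loss of information about a point of $KC_{\Q^c}$. This is resolved by the remark made just before the theorem statement: any point of $KC_{\Q^c}$ requiring $\Rc$ is a limit of points requiring only $\Rf$ (and vice versa), so the two $R$-sheets are indistinguishable from the perspective of the skew-product topology on $[1/3,2]\times\T$. With this observation in place, the parameterization of $KC_{\Q^c}$ as a skew product is complete, and no further calculation beyond that already performed in Lemma \ref{PropWExists} and Proposition \ref{PropEquivParameters} is required.
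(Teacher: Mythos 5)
Your proposal matches the paper's own treatment: the paper likewise derives Theorem \ref{ThmKCConjSkew} directly from Proposition \ref{PropEquivParameters} by suppressing the $\{\Rf,\Rc\}$ coordinate (which both $\hat T$ and the extended $\hat f$ fix) and invoking the preceding remark that points requiring $\Rc$ are approximable by points requiring $\Rf$, so the choice may be safely ignored. No gap; this is essentially the same argument.
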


\section{Minimality of $KC$}
	In light of Theorem \ref{ThmKCConjSkew}, we will first consider
	the minimality of the $\Z^2$ action of 
	$(\hat T, \hat S)$ on $[1/3,2]\times \T$ where $\hat T$ and 
	$\hat S$ are defined by
	\[
		\hat T(\alpha, t) = (\alpha, t + \alpha)\qquad\text{and}
		\qquad
		\hat S(\alpha, t) = \hat f(\alpha, t).
	\]
	Though the minimality of $([1/3,2]\times \T,\hat T, \hat S)$ and $(KC, T, S)$
	can be deduced
	from the explicit methods in Section \ref{SecExplicitBounds}, we present an abstract proof 
	of minimality that may be of independent interest.

	Since trouble arises when considering $(\alpha, t)$ with $\alpha\in\Q$,
	we will use the Uryson Metrization Theorem to produce a metric $\hat d$
	such that 
	$([1/3,2]\backslash \Q)\times \T$, endowed with the subspace topology, is a complete 
	metric space with respect to $\hat d$.

	\begin{proposition}\label{PropParamSpaceMinimal}
		The $\Z^2$ action of $(\hat T, \hat S)$ on 
		$([1/3,2]\backslash \Q) \times\T$ is minimal with respect to $\hat d$.
	\end{proposition}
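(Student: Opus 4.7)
The plan is to exploit the skew-product structure. Fix an arbitrary point $(\alpha_0, t_0) \in ([1/3,2]\backslash\Q) \times \T$, let $Y$ denote the closure of its $\Z^2$-orbit under $(\hat T, \hat S)$, and aim to show $Y = ([1/3,2]\backslash\Q) \times \T$. The metric $\hat d$ generates the subspace topology inherited from $[1/3,2] \times \T$ with its product topology, so I will work with that topology throughout; completeness plays no direct role in the density argument itself.

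First I would show $\{\alpha_0\} \times \T \subseteq Y$. Since $\hat T$ fixes the first coordinate and translates the second by $\alpha_0$, this reduces to showing the $\Z$-orbit $\{t_0 + a\alpha_0 : a \in \Z\}$ is dense in $\T$ whenever $\alpha_0$ is irrational. A basic open neighborhood in $\T$ is determined by specifying a projection in $\R/(6^n\Z)$ within some $\varepsilon$, so it suffices to show density of $\{a\alpha_0 \mod 6^n : a \in \Z\}$ in $\R/(6^n\Z)$; after rescaling by $6^n$ this reduces to Weyl equidistribution for the irrational number $\alpha_0/6^n$.

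Next I would transport this density to every other fiber. Because $M_2$ and $M_{1/3}$ are bijections on $\T$ (Proposition \ref{PropMWellDefined}), $\hat S$ sends the fiber over $\alpha$ onto the fiber over $f(\alpha)$, so $\hat S$-invariance of $Y$ yields $\{f^n(\alpha_0)\} \times \T \subseteq Y$ for every $n \in \Z$. By Proposition \ref{PropfAperiodic}, $f$ is conjugate to an irrational rotation and is therefore minimal on $[1/3,2]$, so $\{f^n(\alpha_0)\}$ is dense there. For any target $(\alpha, t) \in ([1/3,2]\backslash\Q) \times \T$, choosing $n_k$ with $f^{n_k}(\alpha_0) \to \alpha$ gives $(f^{n_k}(\alpha_0), t) \in Y$ converging to $(\alpha, t)$, so $Y$ fills out the whole space.

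The main obstacle is the single-fiber density step: one must see that density at every finite level $\R/(6^n\Z)$ is enough to conclude density in $\T$, which is really an observation about the inverse-limit topology, and that irrationality of $\alpha_0$ persists under division by $6^n$ so Weyl's theorem applies at every level. Once that is in hand, the bijectivity of $M_2, M_{1/3}$ and the minimality of $f$ on the base fit together to finish the proof with no further fiber-by-fiber obstructions.
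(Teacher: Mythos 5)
Your proof is correct and takes essentially the same route as the paper: density of the $\hat T$-orbit within a single fiber, established level-by-level in the finite quotients $\R/(6^n\Z)$ and then lifted to $\T$ via the product topology, combined with minimality of $f$ on the base $[1/3,2]$. The only difference is the order of assembly — you fill the fiber over $\alpha_0$ first and then transport whole fibers with $\hat S$ (using bijectivity of $M_2$, $M_{1/3}$), whereas the paper first reaches the target fiber via the $\hat S$-orbit closure and then sweeps it with $\hat T$ — a cosmetic reordering that, if anything, avoids the paper's implicit compactness step in locating a point of the form $(\alpha,t'')$.
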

	\begin{proof}
		We first claim that since $\alpha\notin\Q$, the second coordinate
		of $\Orb_{\hat T}(\alpha, t)$ is dense in $\T$.
		For any $k$, it is clear that the second coordinate of 
		$\Orb_{\hat T}(\alpha, t)$ is dense modulo $6^k$.
		That is, the second coordinate of $\id\times\proj_k(\Orb_{\hat T}(\alpha, t))$
		is dense 
		in $\proj_k(\T)$.  We therefore have the second coordinate of 
		$\Orb_{\hat T}(\alpha, t)$ is dense modulo $6^i$ for any $i\leq k$.
		Denseness now follows from 
		the definition of the product topology on $\T$.

		Using this observation, it is clear that for any
		$(\alpha, t'),(\alpha, t)\in ([1/3,2]\backslash \Q)\times\T$,
		we have $(\alpha, t) \in \overline{\Orb_{\hat T}(\alpha, t')}$.

		To complete the proof,
		fix $(\alpha', t'),(\alpha, t)\in ([1/3,2]\backslash \Q)\times\T$.
		Since the orbit of any point under $f:[1/3,2]\to[1/3,2]$ is dense,
		we may find a point $(\alpha, t'')\in\overline{\Orb_{\hat S}(\alpha',t')}$.
		Using our previous observation, 
		$(\alpha, t) \in \overline{\Orb_{\hat T}(\alpha, t'')}$. This
		means
		$(\alpha, t)\in \overline{\Orb(\alpha',t')}$, and so the orbit of every point
		is dense.
	\end{proof}

	If the function $K:([1/3,2]\backslash \Q)\times\T\to KC_{\Q^c}$
	were continuous, this would
	give us a quick proof of the minimality of $KC_{\Q^c}$.  
	Unfortunately this is not the case,
	but the set of points where $K$ is continuous is a dense $G_\delta$.
	
	\begin{proposition}\label{PropKCtsOnDenseSet}
		The set of points of continuity of $K:[1/3,2]\times\T\to KC$ is
		a dense residual set $G$ and $K(G)$ dense in $KC$.
	\end{proposition}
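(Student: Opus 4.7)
The plan is to prove both parts of the statement by exhibiting a concrete dense subset of continuity points. For any map from a topological space into a metric space the set of continuity points is automatically $G_\delta$: writing
\[
	G = \bigcap_{n\geq 1}\{(\alpha,t) : \omega_K(\alpha,t) < 1/n\},
\]
where $\omega_K$ denotes the oscillation of $K$, each constituent set is open. So only density needs work, and I would produce an explicit dense subset.

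Let
\[
	D = \bigl\{(\alpha,t)\in[1/3,2]\times\T : \alpha\notin\Q \text{ and } n\alpha_i + \tau_i \notin \Z \text{ for all } i,n\in\Z\bigr\},
\]
where $(\alpha_i,\tau_i) = (\id\times\proj_0)\circ\hat f^i(\alpha,t)$. First I would show $D\subseteq G$: irrationality of $\alpha$ forces every iterate $f^i(\alpha)\neq 1$, so $\hat f^i$ is continuous at $(\alpha,t)$ for each $i$; each tile in a prescribed finite window $F$ of $K(\alpha,t)$ is determined by finitely many floor/ceiling expressions with arguments of the form $n\alpha_m+\tau_m$, and the defining condition of $D$ ensures none of these arguments is an integer, making each such tile locally constant in the parameters. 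Density of $D$ is then a Baire category argument: $\{\alpha\notin\Q\}$ is a dense $G_\delta$ in $[1/3,2]$, and for each fixed $(i,n)$ the exceptional equation $n\alpha_i+\tau_i\in\Z$ is closed and pins the continuously and freely varying quantity $\tau_i$ to a single residue, so it is nowhere dense. A countable intersection of open dense sets remains dense by Baire.

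For the second assertion, I would show that every nonempty basic open $U\subseteq KC$---a cylinder specified by a valid pattern $P$ on some finite window $F\subset\Z^2$---meets $K(D)\subseteq K(G)$. Using Proposition~\ref{PropRobbie}, some Basic Construction parameters $(\alpha_0,t_0)$ together with a choice of $\Rf$ or $\Rc$ realize $P$. I would then perturb in two stages to land in $D$ while preserving $P$ on $F$: first nudge $\alpha_0$ off $\Q$ in the direction consistent with the $\Rf/\Rc$ choice of the original realization, then nudge the $\T$-coordinate to avoid the finitely many equalities $n\alpha_i+\tau_i\in\Z$ that affect the tiles inside $F$. Since only finitely many floor/ceiling expressions govern $F$ and each such expression at a non-integer argument is locally constant, a sufficiently small perturbation preserves $P$.

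The main obstacle is this last step: a generic valid pattern $P$ might arise in $KC$ only as a ``boundary'' value of $K$ (integer phases, or rational angles, where $K$ is discontinuous). The key observation is that the choice of $\Rf$ versus $\Rc$ pre-selects an admissible side of each such discontinuity, so perturbing strictly into the interior of the corresponding parameter region yields a point of $D$ realizing the same finite pattern $P$, completing the density argument.
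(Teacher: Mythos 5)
Your proposal is correct and follows essentially the same route as the paper: both identify the discontinuities of $K$ with the locus where some floor/ceiling argument crosses an integer (equivalently, where the $\Rf$ and $\Rc$ constructions disagree on a finite window), observe this locus is a countable union of closed nowhere dense sets so that Baire gives a dense residual continuity set, and obtain density of the image from the fact that every finite pattern is realized on a parameter set with nonempty interior. The only slips are cosmetic: the appeal should be to Proposition \ref{PropBasicConstructRepr} rather than Proposition \ref{PropRobbie} for realizing a given pattern by Basic Construction parameters, and your integer-avoidance conditions should also include the arguments $\lambda n\alpha_m+t_m$ appearing in the left/right labels, which changes nothing in the Baire argument.
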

	\begin{proof}
		Let $A_{m\times n}= \{0,\ldots, m-1\}\times\{0,\ldots,n-1\}$.
		Suppose $E\subset [1/3,2]\times \T$ is an open set on which $K$ is not
		continuous.
		Then, for some $m,n\in \N$, $E$ must contain a point $(\alpha,t)$ so that 
		$K(\alpha,t,\Rf)|_{A_{m\times n}}\neq K(\alpha,t,\Rc)|_{A_{m\times n}}$.
		Any point $(\alpha,t)$ that satisfies this is a point of discontinuity, and
		any point of discontinuity satisfies this condition for some $m,n$.
		
		Define
		\[
			B_{m\times n} = \{(\alpha, t): K(\alpha, t,\Rf)|_{A_{m\times n}}\neq K(\alpha,t,\Rc)|_{A_{m\times n}}\},
		\]
		and notice that since the only points of discontinuity of
		$\floor{\cdot}$ and $\ceil{\cdot}$ are $\Z$, $B_{m\times n}$ is a closed,
		nowhere dense set implying that $B_{m\times n}^c$
		is a dense open set.  Let
		\[
			G=\bigcap _{m,n\in\N} B_{m\times n}^c.
		\]
		We now have that by construction, the set of continuity 
		points of $K$ is the dense residual set $G$.  Further,
		since $K(B_{m\times n}^c)|_{A_{m\times n}}$ contains every
		$m\times n$ configuration that appears in $KC$, $K(G)$ is dense in $KC$.
	\end{proof}

	\begin{corollary}\label{PropKSomewhatCont}
		If $O\subset KC$ is a non-empty open set, then $K^{-1}(O)$ contains
		a non-empty open set.
	\end{corollary}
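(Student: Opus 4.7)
The plan is to deduce this directly from Proposition \ref{PropKCtsOnDenseSet}, which supplies a dense residual set $G \subset [1/3,2]\times\T$ of continuity points of $K$ and tells us that $K(G)$ is dense in $KC$.

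First I would fix a non-empty open set $O \subset KC$. Since $K(G)$ is dense in $KC$, the intersection $O \cap K(G)$ is non-empty, so there exists some $x \in G$ with $K(x) \in O$. Because $x \in G$, the map $K$ is continuous at $x$, and $O$ is an open neighborhood of $K(x)$; therefore there is an open neighborhood $U$ of $x$ in $[1/3,2]\times\T$ with $K(U) \subset O$. Then $U \subset K^{-1}(O)$, so $K^{-1}(O)$ contains the non-empty open set $U$.

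There is no real obstacle here — the corollary is essentially a packaging of the previous proposition, combining density of $K(G)$ (to locate a preimage inside $O$) with pointwise continuity on $G$ (to extract an open neighborhood whose image stays in $O$). The only subtle point worth flagging is that one should only require continuity at the single chosen point $x$, rather than on all of $G$, so that the fact that $G$ itself is merely residual (and not open) creates no difficulty.
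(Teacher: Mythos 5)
Your proof is correct and follows essentially the same route as the paper: use density of $K(G)$ to find a continuity point $x$ with $K(x)\in O$, then use continuity of $K$ at $x$ to pull back an open neighborhood into $K^{-1}(O)$. Your remark that only pointwise continuity at the chosen point is needed is exactly the right observation and matches the paper's argument.
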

	\begin{proof}
		Let $G$ be a dense set of points of continuity of $K$ whose image is dense.
		Fix an open set $O\subset KC$.  Since $K(G)$ is dense, $K(G)\cap O\neq \emptyset$
		and so $O$ is a neighborhood of the image of a point of continuity of $K$.
		Thus $K^{-1}(O)$ is a neighborhood and so contains an open set.
	\end{proof}
	
	\begin{lemma}\label{PropCtsOnDenseImpliesMinimal}
		Suppose $(X,\sigma)$ and $(Y,\hat \sigma)$ are dynamical
		systems and that $g:X\to Y$ is a surjective
		map that satisfies $g\circ \sigma=\hat\sigma\circ g$.
		If the set of points of continuity of $g$ is a dense set $G$ and $g(G)$ is dense in $Y$, then 
		$(X,\sigma)$ minimal implies $(Y,\hat \sigma)$ minimal.
	\end{lemma}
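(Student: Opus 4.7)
The plan is to show that for each $y\in Y$ and each nonempty open $U\subset Y$, some iterate $\hat\sigma^n y$ lies in $U$; this gives density of every orbit in $Y$ and hence minimality of $(Y,\hat\sigma)$. The key observation, generalizing Corollary \ref{PropKSomewhatCont}, is that the hypotheses on $g$ force $g^{-1}(U)$ to contain a nonempty open subset of $X$. Indeed, since $g(G)$ is dense in $Y$, we may pick $x_0\in G$ with $g(x_0)\in U$, and then continuity of $g$ at $x_0$ yields an open neighborhood $V\ni x_0$ with $g(V)\subset U$, so $V\subset g^{-1}(U)$.

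Given this, the rest of the argument is essentially automatic. Fix $y\in Y$ and a nonempty open $U\subset Y$, and produce an open set $V\subset g^{-1}(U)$ as above. By surjectivity of $g$, choose any $x\in g^{-1}(y)$. Minimality of $(X,\sigma)$ gives some $n$ with $\sigma^n x\in V$, and then the intertwining relation $g\circ\sigma=\hat\sigma\circ g$ yields
$\hat\sigma^n y = g(\sigma^n x)\in g(V)\subset U$.
Hence every orbit in $Y$ meets every nonempty open set, establishing minimality.

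The main (and really only) subtlety lies in the first step: although $g$ will in general be very discontinuous, the fact that it is continuous on a dense set whose image is dense is enough to guarantee that preimages of nonempty open sets have nonempty interior, which is exactly the topological hypothesis needed to push minimality through $g$. For the intended application to the $\Z^2$ actions on $([1/3,2]\backslash\Q)\times\T$ and $KC_{\Q^c}$, the identical argument applies with $\sigma^n$ replaced by an arbitrary element of the acting group, since the intertwining, surjectivity, and continuity hypotheses carry over verbatim.
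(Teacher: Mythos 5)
Your proof is correct and takes essentially the same approach as the paper: both arguments use a continuity point of $g$ in $G$ whose image lies in the target open set to produce a nonempty open subset of the preimage, and then invoke minimality of $(X,\sigma)$ to land an orbit point of a $g$-preimage of $y$ there. The paper packages the first step as ``$g$ maps dense sets to dense sets'' and applies it to $\Orb(x)$, but the underlying mechanism is identical to your preimage-has-interior formulation.
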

	\begin{proof}
		We will first show that if $D\subset X$ is dense, then $g(D)$ is dense.
		Fix $y\in Y$ and some neighborhood $N_y$ of $y$. Let $G$ be a dense set of 
		points of continuity for $g$ such that $g(G)$ is dense.  Then, 
		$N_y\cap g(G)\neq \emptyset$ and so $N_y$ is a neighborhood
		of the image of a point of continuity.  Thus $g^{-1}(N_y)$ is a neighborhood
		of some point.  Since $D$ is dense, $D\cap g^{-1}(N_y)\neq \emptyset$,
		and so $g(D)$ intersects every neighborhood and must be dense.
		
		To complete the proof, fix $y\in Y$, 
		and by surjectivity of $g$ find $x\in X$ so
		$g(x)=y$.
		Suppose $X$ is minimal.  We have that
		$\Orb x$ is dense, and so $g(\Orb x) = \Orb g(x)=\Orb y$ is dense.  
	\end{proof}

	\begin{proposition}
		$\Phi(KC)=\overline{\Phi(KC_{\Q^c})}$.
	\end{proposition}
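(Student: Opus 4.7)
The plan is to prove the two inclusions $\Phi(KC_{\Q^c}) \subseteq \Phi(KC)$ and $\overline{\Phi(KC_{\Q^c})} \supseteq \Phi(KC)$ separately. The first is immediate, so closedness of $\Phi(KC)$ then gives $\overline{\Phi(KC_{\Q^c})} \subseteq \Phi(KC)$. To see this closedness, note that $KC$ is a closed subset of the compact shift space $\mathfrak K^{\Z^2}$ (being the intersection of the Kari-Culik SFT with the closed condition that each row lies in $\bar{\mathcal S}$), hence compact, and $\Phi$ is continuous, so $\Phi(KC)$ is compact and therefore closed.

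For the reverse inclusion, I would establish the slightly stronger statement that $KC_{\Q^c}$ is dense in $KC$, from which the conclusion follows by continuity of $\Phi$. Fix $y \in KC$; by Proposition \ref{PropKCtsOnDenseSet}, $K(G)$ is dense in $KC$, so I may choose points $(\alpha_k, t_k) \in G$ with $K(\alpha_k, t_k) \to y$. At each such continuity point I would perturb the first coordinate to a nearby irrational: since the irrationals are dense in $[1/3, 2]$ and $K$ is continuous at $(\alpha_k, t_k)$, I can pick $\beta_k \in [1/3,2]\setminus\Q$ close enough to $\alpha_k$ that $K(\beta_k, t_k)$ lies within $1/k$ of $K(\alpha_k, t_k)$ in $\mathfrak K^{\Z^2}$. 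Because $\beta_k$ is irrational and each iterate $f^i(\beta_k)$ is obtained by multiplication by a rational, every entry of the angle vector of $K(\beta_k, t_k)$ is irrational, so $K(\beta_k, t_k) \in KC_{\Q^c}$. A diagonal argument then produces a sequence in $KC_{\Q^c}$ converging to $y$, and applying $\Phi$ yields the desired sequence in $\Phi(KC_{\Q^c})$ converging to $\Phi(y)$.

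The main obstacle is that $K$ is not continuous on all of its domain, which prevents a direct density argument based purely on density of irrationals in $[1/3, 2]$. The workaround is precisely to first land near $y$ using points of $G$ (where continuity is guaranteed) and only then perturb locally to an irrational first coordinate. A minor bookkeeping point is that $K$ depends on a choice between $\Rf$ and $\Rc$, but as remarked after Proposition \ref{PropEquivParameters}, either choice may be fixed throughout the approximation without affecting the limit.
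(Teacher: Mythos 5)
Your proof is correct and follows essentially the same route as the paper: both arguments reduce to Proposition \ref{PropKCtsOnDenseSet} (density of the images of continuity points of $K$) together with density of the irrationals in the first coordinate, the paper phrasing this via Corollary \ref{PropKSomewhatCont} applied to a cylinder set around $y$ while you phrase it sequentially with a perturbation at a continuity point. You additionally spell out the inclusion $\overline{\Phi(KC_{\Q^c})}\subseteq\Phi(KC)$ via compactness of $KC$, which the paper leaves implicit.
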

	\begin{proof}
		Fix $y\in \Phi(KC)$, $A=\{0,1,\ldots, m-1\}\times\{0,1,\ldots, n-1\}$,
		and the cylinder set $C=\{x\in \Phi(KC): x|_A=y|_A\}$.  $C$ is open
		and so by Corollary \ref{PropKSomewhatCont}, $K^{-1}(C)$ contains an
		open set.  Thus, there is some $(\alpha,t)\in K^{-1}(C)$ where
		$\alpha\notin \Q$.
	\end{proof}

	\begin{theorem*}[Theorem \ref{ThmKCMiminal}]
		$KC$ is minimal with respect to the group action of $\Z^2$ by translation.
	\end{theorem*}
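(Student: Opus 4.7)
The plan is to establish minimality of $KC$ in three movements: first I will prove that $KC_{\Q^c}$ is a minimal subsystem, next that $KC_{\Q^c}$ is dense in $KC$, and finally that every $y_0\in KC\backslash KC_{\Q^c}$ has a point of $KC_{\Q^c}$ in its orbit closure, which combines with the first two movements to give $\overline{\Orb(y_0)}=KC$.

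The first movement will invoke Lemma \ref{PropCtsOnDenseImpliesMinimal} with source $X:=([1/3,2]\backslash\Q)\times\T$, target $KC_{\Q^c}$, and factor map $K|_X$. Proposition \ref{PropParamSpaceMinimal} supplies minimality of $X$; Proposition \ref{PropEquivParameters} supplies surjectivity onto $KC_{\Q^c}$ and commutation with the $\Z^2$-actions (with the $\Rf/\Rc$ ambiguity handled by the approximation remark immediately following that proposition); and Proposition \ref{PropKCtsOnDenseSet}, together with the fact that $X$ is itself a dense $G_\delta$ in $[1/3,2]\times\T$, ensures that the continuity points of $K|_X$ form a dense $G_\delta$ of $X$ whose image is dense in $KC_{\Q^c}$. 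The second movement is an immediate consequence of Corollary \ref{PropKSomewhatCont}: the $K$-preimage of any cylinder in $KC$ contains a non-empty open set, which must meet $X$ by density of the irrationals, producing a point of $KC_{\Q^c}$ inside the chosen cylinder.

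The third movement is the new ingredient, and I expect it to be the main obstacle. Fix $y_0\in KC\backslash KC_{\Q^c}$; Corollary \ref{PropAnglesRespect} forces every row-angle of $\Phi(y_0)$ to be rational, and translations preserve rationality, so $\Orb(y_0)$ lies entirely in $KC\backslash KC_{\Q^c}$. The key trick will be to use Proposition \ref{PropfAperiodic} to pick integers $b_k$ with $f^{b_k}(\alpha((\Phi(y_0))_0))\to\alpha_\infty\in[1/3,2]\backslash\Q$, then extract a compact-space convergent subsequence $S^{b_{k_j}}y_0\to y_*$ in $KC$. The zeroth row of $y_*$ is a product-topology limit of generalized Sturmian sequences whose angles tend to $\alpha_\infty$, and because every member of $\bar{\mathcal S}$ is balanced its finite-window averages differ from the true angle by $O(1/N)$; this uniform estimate will let me transfer the convergence of angles to the limit row, yielding $\alpha((y_*)_0)=\alpha_\infty\notin\Q$ and hence $y_*\in KC_{\Q^c}\cap\overline{\Orb(y_0)}$. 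Once $y_*$ is in hand, the first movement gives $\Orb(y_*)$ dense in $KC_{\Q^c}$ in its subspace topology, and the second movement upgrades this to $\Orb(y_*)$ dense in $KC$; invariance of the closed set $\overline{\Orb(y_0)}$ then yields $KC=\overline{\Orb(y_*)}\subseteq\overline{\Orb(y_0)}$, completing the argument.
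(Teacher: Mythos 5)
Your overall architecture is the paper's: minimality of $KC_{\Q^c}$ via Lemma \ref{PropCtsOnDenseImpliesMinimal} applied to $K$ on $([1/3,2]\backslash\Q)\times\T$, plus the fact that every orbit closure in $KC$ meets $KC_{\Q^c}$. Your third movement is essentially the paper's second step (the paper likewise uses density of $f$-orbits to push the angle vector off $\Q$ inside $\overline{\Orb(y_0)}$), and your balanced-sequence estimate for why the limit point $y_*$ has angle $\alpha_\infty$ is a more careful version of what the paper leaves implicit. That part is fine; note only that you should also say that all rows of $y_*$, not just the zeroth, get irrational angles, which is immediate from Proposition \ref{PropMultiplier} since consecutive row angles differ by a factor in $\{\tfrac13,2\}$.

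The genuine issue is your second movement. You dispatch ``$KC_{\Q^c}$ is dense in $KC$'' in one line from Corollary \ref{PropKSomewhatCont}, but this is precisely the point where the paper is most careful and where it does real work that your proposal omits. The difficulty, which the paper flags explicitly (``we must take special care to differentiate between $KC$ and $\Phi(KC)$''), is that $\Phi$ is up to sixteen-to-one: over bottom-label arrays whose angle vector meets $\{1/3,1/2,1,3/2\}$ there are several distinct tilings, and only some of them arise as Basic Constructions, i.e.\ lie in the image of $K$. Knowing that $\Phi(KC_{\Q^c})$ is dense in $\Phi(KC)$ — which is all the paper actually proves in the proposition following Corollary \ref{PropKSomewhatCont} — does not lift to density in the tiling space on these fibers. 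The paper therefore introduces $KC'$ (the locus where $\Phi$ is injective), shows orbits of $KC_{\Q^c}$ are dense there, and then proves $KC'$ is dense in $KC$ by a case analysis of the exceptional angles $3/2$, $1$, $1/2$, $1/3$, using the transition graphs to show that each of the competing tile configurations over an exceptional row is realized along arbitrarily long runs when the angle is perturbed to $\alpha\pm\delta$. Your appeal to Corollary \ref{PropKSomewhatCont} at full strength in the tiling space only works if one grants the assertion inside Proposition \ref{PropKCtsOnDenseSet} that $K(B_{m\times n}^c)|_{A_{m\times n}}$ already contains every $m\times n$ tile configuration of $KC$ — but that assertion is exactly what the exceptional-angle case analysis verifies, so as written your argument either presupposes the missing step or is circular. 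To close the gap you need some version of the paper's perturbation argument showing that every tiling over an exceptional fiber of $\Phi$ is a limit of Basic Constructions.
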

	\begin{proof}
		We will first show that every orbit of every point in $KC_{\Q^c}$
		is dense in $KC'$, the subset of $KC$ where $\Phi$ is one-to-one,
		and then that the
		orbit closure of any point in $KC$ intersects $KC_{\Q^c}$.
		Finally we will show that $KC'$ is dense in $KC$.
		Here we must take special care to differentiate between $KC$ and $\Phi(KC)$.
		
		By Proposition \ref{PropEquivParameters}, we have that
		\[
			K\Big(([1/3,2]\backslash\Q)\times \T\times\{\Rf,\Rc\}\Big) = KC_{\Q^c}.
		\]
		Now, by Proposition \ref{PropKCtsOnDenseSet}, $K$ satisfies the conditions
		of Lemma \ref{PropCtsOnDenseImpliesMinimal}.  Thus, since Proposition
		\ref{PropParamSpaceMinimal} states that $([1/3,2]\backslash \Q)\times\T$
		is minimal with respect to $(\hat T,\hat S)$,
		Lemma \ref{PropCtsOnDenseImpliesMinimal} gives us that the orbit of every point
		in $KC_{\Q^c}$ is dense in $KC_{\Q^c}$.

		Let 
		\[
			KC'=\{x\in KC: \alpha(x)\text{ does not
			contain }1/3, 1/2, 1,\text{ or } 3/2\}.
		\]
		and recall that the proof of 
		Theorem \ref{PropPhiOneToOne} shows that $\Phi$
		is one-to-one exactly on $KC'$.  Thus, since $\Phi(KC_{\Q^c})$
		is dense in $\Phi(KC)$, by continuity of $\Phi$, we may conclude that the orbit of any
		point in $KC_{\Q^c}$ is dense
		in $KC'$.

		Next, we will show that for any $y\in KC$, we have 
		$\overline{\Orb y}\cap KC_{\Q^c}\neq \emptyset$.
		Fix $y\in KC$ and let $\alpha(y)=(\ldots, \alpha_0,\alpha_1,\ldots)$.  
		Choose $f$ to be the function \[
			f(x) =  \left\{\begin{array}{cl}
					2x &\text{ if } x\in[1/3,1)\\
					x/3 &\text{ if } x\in [1,2]
				\end{array}\right.
		\]
		or
		\[
			f(x) =  \left\{\begin{array}{cl}
					2x &\text{ if } x\in[1/3,1]\\
					x/3 &\text{ if } x\in (1,2]
				\end{array}\right.
		\]
		such that $f(\alpha_i)=\alpha_{i+1}$.
		Since the orbit of
		every point under $f$ is dense, we may find $y'\in \overline{\Orb y}$
		so that $\alpha(y')$ contains only irrationals.  However, since $\alpha(y')$
		contains only irrationals, $y'\in KC_{\Q^c}$ and so $\Orb (y')$
		is dense in $KC_{\Q^c}$ showing that $\Orb y$ is dense in $KC_{\Q^c}$.
		
		To complete the proof, we will now show $KC'$
		is dense in $KC$.  We will do this by considering cases.
		Suppose $y\in KC\backslash KC'$. This means for
		some $j$, $\alpha((y)_j)\in\{1/3,1/2,1,3/2\}$.  For simplicity,
		assume this occurs at $j=0$ and let $\alpha=\alpha((y)_0)$.

		Case $\alpha=3/2$: If $\alpha=3/2$, the sequence
		of bottom labels must be $\cdots 121212\cdots$.
		Looking at the transition graph in
		Figure \ref{FigType13TransitionGraph},
		a sequence to bottom labels of $\cdots 121212\cdots$ can be realized in
		two ways.  Call the configuration
		using the tiles in the bottom of the diagram configuration $A$
		and the configuration using the tiles in the top of the diagram configuration
		$B$, and notice
		that if $\alpha = 3/2+\delta$ for $0<\delta$ small, then the row will
		contain arbitrarily long runs of tiles in configuration $A$.  Similarly,
		if $\alpha = 3/2-\delta$, the row will contain arbitrarily long rows of
		tiles in configuration $B$.

		Case $\alpha=1$. Looking at the transition graph in Figure
		\ref{FigType21TransitionGraph}, we see that if $\alpha=1$, a bottom sequence
		of $\cdots 111\cdots$ can be obtain in two different ways.  Call
		these configurations configuration $A$ and configuration $B$.  Notice
		that we can force the bottom labels of the row to contain arbitrarily
		long sequences of $1$'s separated by $0$'s by picking
		$\alpha = 1-\delta$ for some small $0<\delta$.
		Further, the only way this can happen is by alternating arbitrarily
		long occurrences of configuration $A$ with arbitrarily long sequences
		of configuration $B$.

		Similarly for cases $\alpha=1/2$ and $\alpha=1/3$, a small 
		perturbation of $\alpha$ will produce arbitrarily long occurrences
		of each type of configuration. Since $KC'$ contains all perturbations
		of angles of points in $KC$, $KC'$ is dense in $KC$.
	\end{proof}

\section{Explicit Return Time Bounds}
	\label{SecExplicitBounds}
	We will now give explicit bounds on the on the size
	of the smallest rectangular configuration in $KC$ that
	contains every $m\times n$ sub-configuration.  The strategy
	will be to analyze the parameter space $[1/3,2]\times \T$
	to find intervals of
	parameters that have short $\hat T$-return times and  then bound the 
	$\hat S$-return times to such intervals.  These return time bounds 
	will then carry forward
	to $(KC, T,S)$.

	\begin{definition}
		Let $\mathcal P_{m,n}$ be the partition of $[1/3,2]\times\T$ given
		by $m\times n$ configurations in $KC$.  Specifically,
		$(\alpha,t)\sim(\alpha',t')$ if  for
		$A= \{0,\ldots, m-1\}\times\{0,\ldots,n-1\}$ we have
		\[
			K(\alpha,t,\Rf)|_A = K(\alpha',t',\Rf)|_A.
		\]
	\end{definition}
	\begin{definition}
		For a partition $\mathcal P$ of $[1/3,2]\times X$, let $\pi_\alpha(\mathcal P)$
		be the restriction of $\mathcal P$ to the fiber $\{\alpha\}\times X$.
	\end{definition}

	We will familiarize ourselves 
	with the structure of $\mathcal P_{m,n}$.  Let
	us consider $\mathcal P_{1,n}$.  Putting the inverse-limit space $\T$
	aside for a moment, let $\mathcal P_n$ be the partition
	of $[1/3,2]\times[0,1)$ such that $(\alpha,t)\sim(\alpha',t')$
	if $(\Rf(\alpha,t))_0^{n-1}=(\Rf(\alpha',t'))_0^{n-1}$.

	After considering pre-images
	under rotation by an angle $\alpha$,
	we see that $\pi_\alpha(\mathcal P_n)$ is precisely the partition
	generated by intervals whose
	endpoints are consecutive elements of
	$C_\alpha=\{0,-\alpha,-2\alpha,\ldots,-n\alpha\mod 1\}$.
	We view $C_\alpha$ as the places $[0,1)$ needs to be ``cut'' to produce 
	$\pi_\alpha(\mathcal P_n)$.  Now, varying $\alpha$, we see that $\mathcal P_n$ is produced
	by cutting $[1/3,2]\times[0,1)$ by the set of lines 
	$L=\{(x,y)\in[1/3,2]\times[0,1): y=-ix\mod 1\text{ for some }i\leq n\}$.
	
	\begin{figure}[h!]
		\footnotesize
		\begin{center}
		\includegraphics[width=3in]{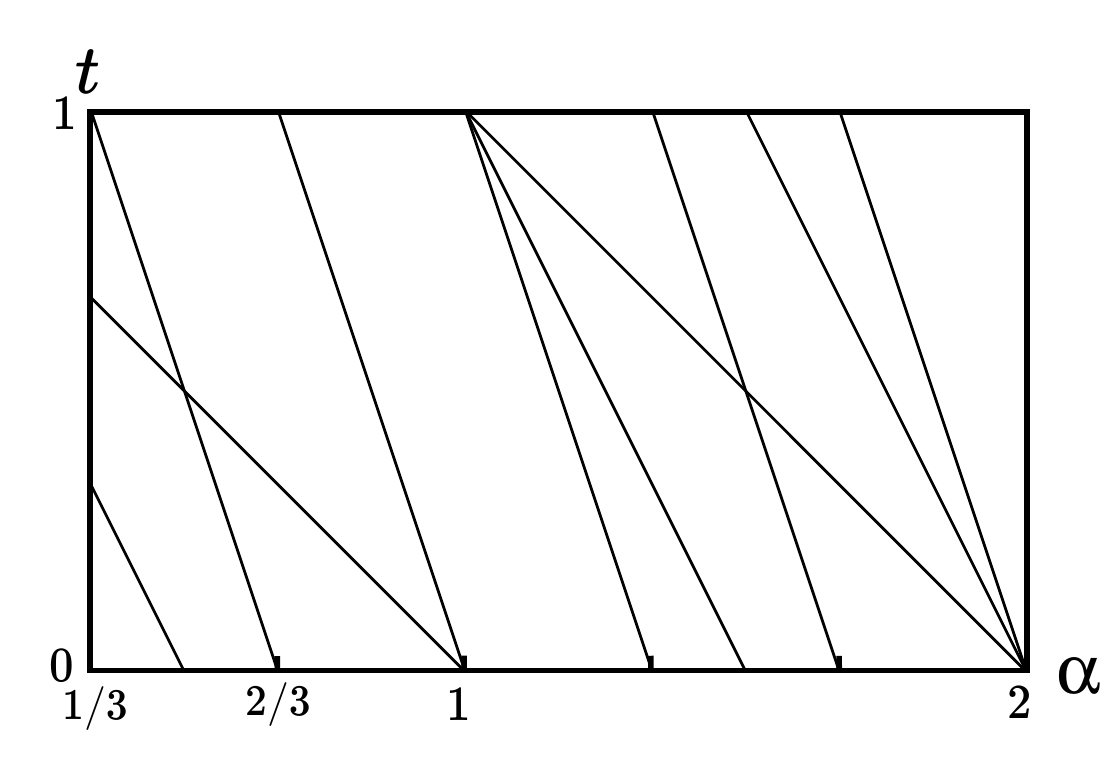}
		\end{center}
		\vspace{-3em}
		\caption{ \footnotesize The partition $\mathcal P_3$.}
	\end{figure}

	\begin{definition}
		For $j\in\N$, define the $\sigma$-algebra 
		$\mathscr B_j = (\id\times\proj_j)^{-1}(\mathscr B)$
		where $\mathscr B$ is the Borel $\sigma$-algebra defined
		on $[1/3,2]\times\R/(6^j\Z)$.
	\end{definition}

	Informally, a
	partition $\mathcal P$ being
	$\mathscr B_j$-measurable means that for a point $(\alpha,t)$,
	$\alpha$ and $t\mod 6^j$ are all you need
	to determine the element of $\mathcal P$ in which it lies.
	Rephrased, $\mathcal P$ gives no
	extra information after the $j$th coordinate of $\mathcal T$.
	Consequently, $\mathcal P_{m,n}$ is $\mathscr B_j$-measurable
	for all $j\geq m$.
	Further, we may interchangeably talk about
	a $\mathscr B_j$-measurable partition of $[1/3,2]\times\T$
	and a Borel-measurable partition of $[1/3,2]\times \R/(6^j\Z)$.
	Given a partition $\mathcal A$ of $[1/3,2]\times\R/(6^j\Z)$, we
	may define a partition $\mathcal P$ of $[1/3,2]\times\T$ by
	the equivalence relation $(\alpha,t)\sim (\alpha',t')$ if 
	$(\alpha, \proj_j(t))$ and $(\alpha', \proj_j(t'))$ lie in
	the same partition element of $\mathcal A$.  In this case,
	we call $\mathcal P$ the \emph{$\mathscr B_j$-measurable extension}
	of $\mathcal A$.

	Let $A=[1/3,2]\times [0,6^i)$ which we will identify
	with $[1/3,2]\times \R/(6^i\Z)$.  Given a finite collection, $L$, of lines in $A$,
	we form a partition of $A$ up to a Lebesgue measure-zero set by 
	taking the connected components of $L^c$.  We call this the
	\emph{geometric partition generated by $L$}.

	Let's consider how $\mathcal P_{m,n}$ and our description of $\mathcal P_n$
	arising from lines relate.  
	
	\begin{definition}
		Let $L_{a,\gamma}^j = \{(x,y)\in[1/3,2]\times \R/(j\Z): y = -ix+\gamma\mod j\text{
		for some }0\leq i\leq a\}$ be the set of lines with slopes in 
		$\{0,-1,-2,\ldots,-a\}$ and offset $\gamma$.
	\end{definition}

	We can now view $\mathcal P_{1,n}$ as being the $\mathscr B_{0}$-measurable extension
	of the partition on $[1/3,2]\times[0,1)$ generated by $L_{n,0}^{1}$.
	Further, the boundary points of $\id\times\proj_j(\mathcal P_{1,n})$
	are precisely the set $\bigcup_{i< 6^j} L_{n,i}^{6^j}$.

	Consider $\hat f^{-1}(\mathcal P_{1,n})$. 
	Since $\hat f^{-1}:[1/3,2]\times \T\to[1/3,2]\times\T$
	either multiplies by $3$ or divides by $2$ (and does so in each coordinate if we view $\hat f$ as
	acting on $(\R^2)^\N$), 
	we see
	\begin{align*}
		\hat f|_{[1/3,2/3]\times \R/(6^j\Z)}^{-1}\big(L_{n,i}^{6^j}\big) &=\\
		&= 
		\big\{(3x,3y\mod 6^j):(x,y)\in L_{n,i}^{6^j}\cap ([1/3,2/3)\times \R/(6^j\Z))\big\} \\
		&=L_{n,3i}^{6^j}\cap ([1,2]\times\R/(6^j\Z))
		\subset L_{n,3i}^{6^j}
	\end{align*}
	and 
	\begin{align*}
		\hat f|_{[2/3,2]\times \R/(6^{j+1}\Z)}^{-1}\big(L_{n,i}^{6^{j+1}}\big) &= \\ 
		&=\big\{(\tfrac{x}{2},\tfrac{y}{2}\mod 6^j):(x,y)\in L_{n,i}^{6^{j+1}}\cap ([2/3,2]\times \R/(6^{j+1}\Z))\big\}\\
		&=L_{n,\frac{i}{2}}^{6^j} \cap ([1/3,1]\times \R/(6^{j+1}\Z))
		\subset L_{n,\frac{i}{2}}^{6^j}.
	\end{align*}

	Illustrated in Figure \ref{FigPartitions} is a truncation of $\proj_3(\hat f^{-i} \mathcal P_{1,1})$.

	\begin{figure}[h!]
	\begin{center}
		\includegraphics[width=1.8in]{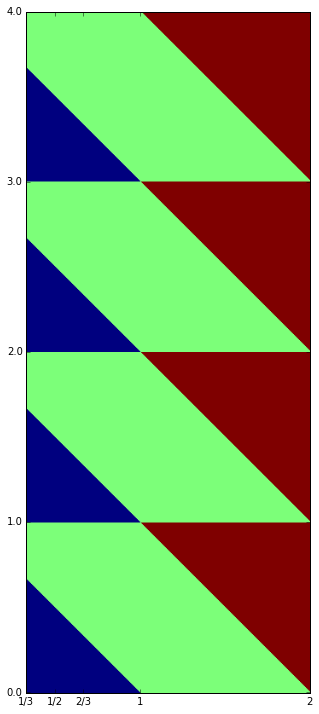}
		\includegraphics[width=1.8in]{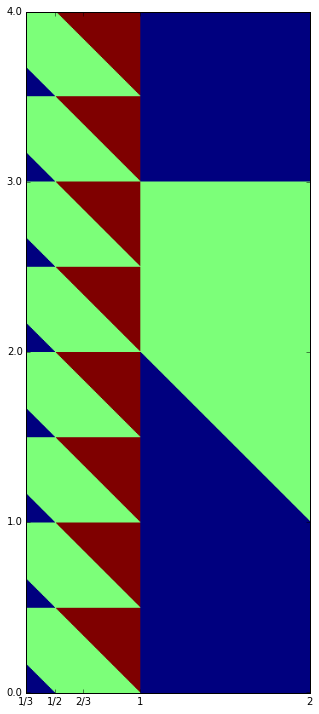}
		\includegraphics[width=1.8in]{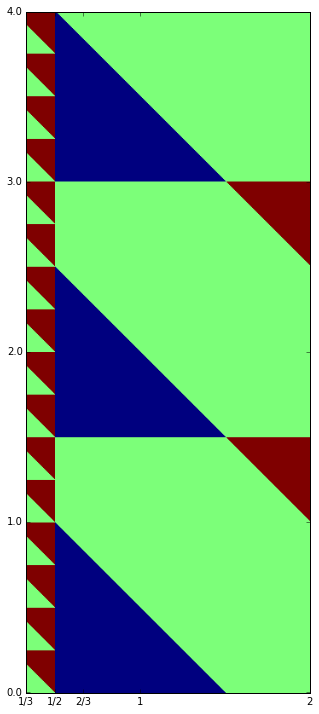}
	\end{center}
	\vspace{-.7cm}
	\caption{
		From left to right, the projection of $\mathcal P_{1,1}$, $\hat f^{-1}\mathcal P_{1,1}$, $\hat f^{-2}\mathcal P_{1,1}$
		onto the third coordinate, truncated to lie in $[1/3,2]\times[0,4)$, and
		colored by whether the symbol at the zero position is $0$, $1$, or $2$.  
	}
	\label{FigPartitions}
	\end{figure}

	\begin{definition}
		Define $\mathrm{rnd}_\alpha:\R\to\Z$ by $\mathrm{rnd}_\alpha(x) = n\in\Z$ 
		whenever $x\in(n-\frac{\log 3}{\log 6}-\frac{\log \alpha}{\log 6}, n+\frac{\log 2}{\log 6}-\frac{\log \alpha}{\log 6})$.
	\end{definition}
	Note that $(n-\frac{\log 3}{\log 6}-\frac{\log \alpha}{\log 6}, n+\frac{\log 2}{\log 6}-\frac{\log \alpha}{\log 6})$
	is an interval of length $1$, so $\mathrm{rnd}_\alpha$ is defined everywhere but the countable set of endpoints. We
	will not attempt to use $\mathrm{rnd}_\alpha$ in any situation where it is not defined.

	\begin{lemma}\label{PropMultiplesof3}
		Suppose $\alpha\notin \Q$.  Then,
		$\hat f^{-j}(\alpha, t) = (\tfrac{3^a}{2^b} \alpha, \tfrac{3^a}{2^b} t)$ and $f^{-j}(\alpha) = \tfrac{3^a}{2^b}\alpha$ where
		\[
			a = j-b\approx \tfrac{\log 2}{\log 6}j\qquad\text{and}\qquad
			b = \mathrm{rnd}_\alpha\left(\tfrac{\log 3}{\log 6}j\right)\approx\tfrac{\log 3}{\log 6}j.
		\]
	\end{lemma}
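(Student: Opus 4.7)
The plan is to iterate $\hat f^{-1}$ and track the scaling at each step. Unpacking the definition of $\hat f$ shows that $\hat f^{-1}$ either halves both coordinates (when the current first coordinate lies in $(2/3,2]$) or multiplies both by $3$ (when it lies in $[1/3,2/3)$); the hypothesis $\alpha\notin\Q$ ensures we never land on the ambiguous boundary point $2/3$ at any stage. Since the same scalar is applied to the first coordinate and to the $\T$-coordinate at every step (via $M_{1/2}$ or $M_{3}$), a routine induction on $j$ yields
\[
    \hat f^{-j}(\alpha,t) = \left(\tfrac{3^a}{2^b}\alpha,\ \tfrac{3^a}{2^b}t\right)
\]
for some non-negative integers $a,b$ with $a+b=j$.

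The next step is to identify $a$ and $b$ in closed form. The only constraint is $\tfrac{3^a}{2^b}\alpha\in[1/3,2]$. Taking logarithms and eliminating $b$ via $b=j-a$, the exponent $a\log 3 - b\log 2$ simplifies to $a\log 6 - j\log 2$, so dividing by $\log 6$ rewrites the constraint as
\[
    a \in \left(\tfrac{j\log 2}{\log 6}-\tfrac{\log\alpha}{\log 6}-\tfrac{\log 3}{\log 6},\ \tfrac{j\log 2}{\log 6}-\tfrac{\log\alpha}{\log 6}+\tfrac{\log 2}{\log 6}\right).
\]
Since $\log 2+\log 3=\log 6$, this open interval has length exactly $1$, and its endpoints are irrational (as $\alpha\notin\Q$ and $\log 2/\log 6$ is irrational), so it contains a unique integer. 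This determines $a$ uniquely and gives the estimates $a\approx\tfrac{\log 2}{\log 6}j$ and $b=j-a\approx\tfrac{\log 3}{\log 6}j$ for free.

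Finally, rearranging the interval above in terms of $b$ produces
\[
    b \in \left(\tfrac{j\log 3}{\log 6}+\tfrac{\log\alpha}{\log 6}-\tfrac{\log 2}{\log 6},\ \tfrac{j\log 3}{\log 6}+\tfrac{\log\alpha}{\log 6}+\tfrac{\log 3}{\log 6}\right),
\]
which is precisely the statement that $\tfrac{j\log 3}{\log 6}$ lies in the interval defining $\mathrm{rnd}_\alpha^{-1}(b)$; hence $b=\mathrm{rnd}_\alpha\!\left(\tfrac{j\log 3}{\log 6}\right)$. There is no real obstacle here: the whole argument is bookkeeping powered by the identity $\log 2+\log 3=\log 6$, which is exactly what forces the determining interval to have length one and pins down $a$ and $b$ uniquely.
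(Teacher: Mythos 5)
Your argument is correct and is exactly the paper's approach: the paper's entire proof is the remark that the lemma "follows directly from solving the system $a+b=j$ and $\tfrac{3^a}{2^b}\alpha\in[1/3,2]$ with $a,b\in\Z$," and you have simply carried out that bookkeeping, correctly matching the resulting interval for $b$ to the defining interval of $\mathrm{rnd}_\alpha$. One negligible quibble: what you actually need (and what $\alpha\notin\Q$ gives, since an endpoint equal to $n\in\Z$ would force $\alpha=2^{j-n}3^{-1-n}\in\Q$) is that the endpoints are not integers, rather than that they are irrational.
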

	The proof of Lemma \ref{PropMultiplesof3} follows directly from solving the system 
	$a+b=j$ and $\tfrac{3^a}{2^b}\alpha\in[1/3,2]$
	with the restriction that $a,b\in \Z$.

	\begin{proposition}\label{PropLunderF}
		Let $B$ be the boundary of $\id\times\proj_m(\mathcal P_{m,n})$ and let
		and $b=\mathrm{rnd}_{f^{-m}(\alpha)}(m\log 3/\log 6)$.  Then $\pi_{\alpha}(B)$ is
		\[
			\pi_{\alpha}\left( \bigcup_{k<2^b6^m} L_{n,\frac{k}{2^b}}^{6^m} \right)
			.
		\]
	\end{proposition}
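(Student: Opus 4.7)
The plan is to decompose $\mathcal P_{m,n}$ as a common refinement of pullbacks of $\mathcal P_{1,n}$ and then use the line-pullback formulas developed immediately before the proposition to compute the boundary fiber-wise. First, by the skew-product relation $K'\circ S=\hat f\circ K'$ from Theorem \ref{ThmKCConjSkew}, the $i$th row of $K(\alpha,t)$ agrees with the zeroth row of $K(\hat f^i(\alpha,t))$; consequently the equivalence defining $\mathcal P_{m,n}$ factors as $\mathcal P_{m,n}=\bigvee_{i=0}^{m-1}\hat f^{-i}\mathcal P_{1,n}$, and the boundary of the projected partition is the union $B=\bigcup_{i=0}^{m-1}\partial(\hat f^{-i}\mathcal P_{1,n})$ in $\id\times\proj_m$.

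For each $i$, I would compute $\partial(\hat f^{-i}\mathcal P_{1,n})$ by iterating the two piecewise pullback formulas: on the region $[1/3,2/3]$ a line $L_{n,k}^{6^j}$ becomes $L_{n,3k}^{6^j}$ (tripling the offset while preserving the projection level), and on the region $[2/3,2]$ a line $L_{n,k}^{6^{j+1}}$ becomes $L_{n,k/2}^{6^j}$ (halving the offset and reducing the projection level by one). Each step corresponds to one backward application of $\hat f^{-1}$; after $i$ iterations the cumulative multiplier on the offset is of the form $3^a/2^b$ with $a+b=i$, and Lemma \ref{PropMultiplesof3} provides the explicit value of $b$ in terms of $\mathrm{rnd}_\alpha(i\log 3/\log 6)$. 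Thus at fiber $\alpha$ the contribution of $\hat f^{-i}\mathcal P_{1,n}$ to the boundary consists of lines $L_{n,\gamma}^{6^m}$ whose offsets $\gamma$ lie in the coset subgroup $\frac{3^a}{2^b}\Z$ modulo $6^m\Z$.

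Taking the union of these contributions over $i=0,\ldots,m-1$ and projecting to fiber $\alpha$ yields a set of offsets contained in $\frac{1}{2^b}\Z$ modulo $6^m\Z$, where $b=\mathrm{rnd}_{f^{-m}(\alpha)}(m\log 3/\log 6)$ records the maximum number of divide-by-two steps encountered in the process of lifting $\mathcal P_{1,n}$'s boundary to projection level $\proj_m$ via iterated $\hat f^{-1}$. The remaining task is to verify that this containment is an equality, giving the claimed expression $\pi_\alpha\left(\bigcup_{k<2^b6^m}L_{n,k/2^b}^{6^m}\right)$. The main obstacle is the bookkeeping: pulling back a single line splits across the two regions $[1/3,2/3]$ and $[2/3,2]$, and tracking how offsets evolve under this branching---showing that intermediate indices $i$ collectively fill in the gaps that no single extremal index cannot---is the combinatorial heart of the argument, relying on the monotonicity in $i$ of the divide-by-two count from Lemma \ref{PropMultiplesof3} and on the fact that $i=0$ automatically contributes the integer lattice $\Z$.
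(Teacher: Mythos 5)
Your route is the paper's: write $\mathcal P_{m,n}$ as the join $\bigvee_i\hat f^{-i}\mathcal P_{1,n}$, push the integer-offset line families bounding $\mathcal P_{1,n}$ through the two piecewise pullback formulas for $\hat f^{-1}$, and read off the cumulative factor $\tfrac{3^a}{2^b}$ from Lemma \ref{PropMultiplesof3}. Up to the inclusion $\pi_\alpha(B)\subseteq\pi_\alpha\bigl(\bigcup_{k<2^b6^m}L_{n,k/2^b}^{6^m}\bigr)$ your outline is complete and matches the paper (your upper index $m-1$ in the join is in fact the one consistent with the definition of $\mathcal P_{m,n}$).

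The gap is the reverse inclusion, which you explicitly defer as ``the combinatorial heart,'' and the mechanism you sketch for it looks like it fails. The index-$i$ term of the join contributes, at fiber $\alpha$ and level $\proj_m$, lines whose offsets lie in the subgroup $\tfrac{3^{a_i}}{2^{b_i}}\Z$ reduced mod $6^m$, with $a_i+b_i=i$; since $3^{a_i}$ is not invertible modulo $2^{b_i+m}3^m$ once $a_i>0$, this is a proper (index $3^{a_i}$) subgroup of $\tfrac{1}{2^{b_i}}\Z \bmod 6^m$, and the union over intermediate $i$ does not fill in $\tfrac{1}{2^b}\Z$: the offset $\tfrac{1}{2^b}$ itself is reached only by an index $i$ with $a_i=0$ and $b_i=b$, which generically does not exist. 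So ``intermediate indices collectively fill in the gaps'' is not the right mechanism. The paper instead obtains every line $L_{n,k/2^b}^{6^m}$, for every $k<2^b6^m$, from a single application of the $m$-fold map to the full integer-offset family $\{L_{n,k}^{6^{m+b}}\}$ at the fiber $f^{-m}(\alpha)$ --- i.e., surjectivity comes from one extremal index together with the fact that $\partial\mathcal P_{1,n}$ at the higher projection level $6^{m+b}$ already contains all $6^{m+b}$ integer offsets. That counting step is the content of the proposition beyond the containment and is what your proposal is missing. (Only the containment you do establish is used later, in showing $\mathcal X_{m,n}$ refines $\mathcal P_{m,n}$, so the downstream results would survive; but the proposition as stated asserts equality.)
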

	\begin{proof}
		Because the boundary of $\mathcal P_{m,n}$ is the boundary of $\bigvee_{i=0}^m f^{-i}(\mathcal P_{1,n})$, 
		we see that $\pi_\alpha(L_{n,\frac{k}{2^b}}^{6^m})$ arises from applying $\hat f^{-m}$ to 
		$\pi_{f^{-m}(\alpha)}(L_{n,k}^{6^{m+b}})$.  This holds for every $k$, which completes the proof.
	\end{proof}

	Iterating $\hat f^{-1}$ and observing how it moves the boundaries of partition
	elements motivates us to define the following refinement of $\mathcal P_{m,n}$.
	
	\begin{definition}
		Let $\mathcal X_{m,n}$ be the $\mathscr B_{m}$-measurable extension
		of the partition generated by $L$ where
		\[
			L = \bigcup_{k<2^m6^m} L_{n,\frac{k}{2^m}}^{6^m}.
		\]
	\end{definition}

	\begin{proposition}
		$\mathcal X_{m,n}$ is a refinement of $\mathcal P_{m,n}$.
	\end{proposition}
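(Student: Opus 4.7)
The plan is to exploit the fact that both partitions are $\mathscr B_m$-measurable. For $\mathcal X_{m,n}$ this is by definition, and for $\mathcal P_{m,n}$ it was already observed in the excerpt. Consequently, the refinement relation on $[1/3,2]\times\T$ reduces to the corresponding relation between the projected partitions on $[1/3,2]\times\R/(6^m\Z)$, and it suffices to show that every line forming the boundary of $\id\times\proj_m(\mathcal P_{m,n})$ already appears among the lines generating $\mathcal X_{m,n}$.

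Proposition \ref{PropLunderF} provides an explicit description of this boundary fiber-by-fiber: at each $\alpha$, the boundary of $\id\times\proj_m(\mathcal P_{m,n})$ is
\[
\pi_\alpha\!\left(\,\bigcup_{k<2^b 6^m} L_{n,k/2^b}^{6^m}\right),\qquad b=\mathrm{rnd}_{f^{-m}(\alpha)}(m\log 3/\log 6),
\]
while $\mathcal X_{m,n}$ is generated by $\bigcup_{k<2^m 6^m} L_{n,k/2^m}^{6^m}$. The key quantitative input is that Lemma \ref{PropMultiplesof3} forces $b\leq m$, because it writes $\hat f^{-m}$ as multiplication by $3^a/2^b$ with $a+b=m$ and $a,b\geq 0$.

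With $b\leq m$ in hand, the argument is a one-line comparison of offsets: any $k/2^b$ with $k<2^b 6^m$ equals $(k\cdot 2^{m-b})/2^m$ with $k\cdot 2^{m-b}<2^m 6^m$, so every line $L_{n,k/2^b}^{6^m}$ appearing in the boundary at fiber $\alpha$ is one of the lines in the union defining $\mathcal X_{m,n}$. Taking the union over $\alpha$ gives an inclusion of boundary sets on $[1/3,2]\times\R/(6^m\Z)$; since this holds uniformly, the partition generated by $L$ refines the level-$m$ projection of $\mathcal P_{m,n}$, and $\mathscr B_m$-measurability of both partitions lifts this to the full inverse-limit space.

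I do not expect a serious obstacle here; the content is essentially the observation $b\leq m$, combined with Proposition \ref{PropLunderF}. The only thing requiring mild care is the bookkeeping around the fact that $b$ itself depends on $\alpha$ via $\mathrm{rnd}_{f^{-m}(\alpha)}$, so different fibers contribute different subsets of offsets; what makes this painless is that the defining set of $\mathcal X_{m,n}$ already sweeps through \emph{all} dyadic offsets with denominator $2^m$ uniformly in $\alpha$, absorbing every possible value of $b$ at once.
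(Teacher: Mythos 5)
Your proof is correct and follows essentially the same route as the paper: the paper's own argument also fixes $\alpha$, invokes Proposition \ref{PropLunderF}, observes $b_\alpha\leq m$, and concludes that the generating set of $\mathcal X_{m,n}$ contains the boundary of $\id\times\proj_m(\mathcal P_{m,n})$. You have merely spelled out the offset bookkeeping ($k/2^b=(k\cdot 2^{m-b})/2^m$) that the paper leaves implicit.
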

	\begin{proof}
		For a fixed $\alpha$, let $b_\alpha$ be the $b$ from Proposition \ref{PropLunderF}, and notice
		that $b_\alpha \leq m$.  It now immediately follows that the set $L$ defining $\mathcal X_{m,n}$ is
		a superset of the set of boundaries of $\id\times\proj_m(\mathcal P_{m,n})$, which completes the proof.
	\end{proof}

	\begin{definition}
		For a partition $\mathcal P$ of $\R$, let $\kappa\mathcal P$ be the \emph{coarseness}
		of $\mathcal P$.  That is,
		\[
			\kappa \mathcal P = \inf_{I\in \mathcal P}\sup\{|b-a|: [a,b]\subset I\}.
		\]
		If $\mathcal P$ is a $\mathscr B_j$-measurable
		partition of $\T$, then $\kappa\mathcal P = \kappa\proj_j(\mathcal P)$.
	\end{definition}

	Given $(\alpha,t)\in [1/3,2]\times\T$, we would like to bound $j$ such that
	$\Orb_{\hat T}^j(\alpha,t) = \{(\alpha,t), \hat T(\alpha, t),\ldots, \allowbreak\hat T^{j-1}(\alpha,t)\}$,
	the $j$-orbit of $(\alpha,t)$ under $\hat T$, intersects every partition element
	of $\pi_\alpha(\mathcal P_{m,n})$.  We can address this in the following way.

	\begin{definition}
		Let $D_\ell^i(\alpha)$ be the smallest $n$ such that
		$\proj_i(\Orb_{\hat T}^n(\alpha,t))$ is $\ell$-dense in $\R/(6^i\Z)$ for any $t$.
	\end{definition}

	Note that the density of $\proj_i(\Orb_{\hat T}^n(\alpha,t))$ is equal to
	the density of $\proj_i(\Orb_{\hat T}^n(\alpha,t'))$,
	and so when computing $D_\ell^i(\alpha)$ we only need to consider a single
	$t$.

	For a fixed $\alpha$, we consider points in $KC$ whose zeroth row
	has rotation number $\alpha$.  For a fixed $t$, consider the $n\times m$ configuration that
	arises based at $(0,0)$ corresponding to $(\alpha,t)$.
	We now see that for any $t'\in \T$, the maximum $\hat T$-waiting time to see
	an occurrence of this $n\times m$ configuration in the point corresponding to
	$(\alpha,t')$ is bounded by
	\[
		D^m_{\kappa\pi_\alpha(\mathcal P_{m,n})}(\alpha) \leq D^m_{\kappa\pi_\alpha(\mathcal X_{m,n})}(\alpha).
	\]

	\begin{proposition}\label{PropXSpacing}
		$\kappa\pi_\alpha(\mathcal P_{m,n})\geq \kappa\pi_\alpha(\mathcal X_{m,n})\geq \min\{|\alpha - \frac{p}{2^mq}|:q\leq n\}$.
	\end{proposition}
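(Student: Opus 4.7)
The plan is to handle the two inequalities in turn. The first, $\kappa\pi_\alpha(\mathcal{P}_{m,n})\geq \kappa\pi_\alpha(\mathcal{X}_{m,n})$, is a general fact about refinements: since $\mathcal{X}_{m,n}$ refines $\mathcal{P}_{m,n}$ by the preceding proposition, each element of $\pi_\alpha(\mathcal{P}_{m,n})$ is a union of elements of $\pi_\alpha(\mathcal{X}_{m,n})$, so any interval fitting inside an $\mathcal{X}$-element also fits inside the containing $\mathcal{P}$-element. The supremum appearing in the definition of $\kappa$ can only grow under coarsening, and hence so can the infimum over partition elements.

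For the second inequality I would explicitly identify the cut set that defines $\pi_\alpha(\mathcal{X}_{m,n})$ on the fiber $\R/(6^m\Z)$. Specializing the lines $L_{n,k/2^m}^{6^m}$ to $x=\alpha$ yields the finite set
\[
    C_\alpha = \{-i\alpha + k/2^m \bmod 6^m : 0\leq i\leq n,\ 0\leq k<2^m 6^m\}.
\]
Since $\pi_\alpha(\mathcal{X}_{m,n})$ is the interval partition of $\R/(6^m\Z)$ produced by cutting at $C_\alpha$, its coarseness equals the minimum distance modulo $6^m$ between two distinct points of $C_\alpha$. Any such distance is the mod-$6^m$ reduction of a quantity $q\alpha + r/2^m$ for some $q\in\Z$ with $|q|\leq n$ and some $r\in\Z$, and conversely every pair $(q,r)$ not yielding zero actually arises as a gap.

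The core step is to lower bound $|q\alpha + r/2^m|$ reduced modulo $6^m$. When $q=0$, the quantity is a nonzero multiple of $1/2^m$ and hence at least $1/2^m$. When $q\neq 0$, I would use that $6^m=2^m3^m$ is an integer, so reducing $q\alpha + r/2^m$ modulo $6^m$ only shifts $r$ by an integer multiple of $2^m 6^m$; consequently the reduced value has the form $|q\alpha - p/2^m|$ for some integer $p$. Factoring out $|q|$ gives $|q|\,|\alpha - p/(2^m|q|)|\geq |\alpha - p/(2^m|q|)|\geq \min\{|\alpha - p'/(2^m q')|:q'\leq n\}$. The $q=0$ case is then subsumed because taking $q'=1$ and $p'$ equal to the nearest integer to $2^m\alpha$ shows the right-hand minimum is already at most $1/2^{m+1}<1/2^m$.

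The main subtlety I anticipate is keeping the modular reductions honest: confirming that reducing $q\alpha + r/2^m$ modulo $6^m$ truly preserves the shape $q\alpha - p/2^m$ with $p\in\Z$, so that the bound in terms of $|\alpha - p/(2^m q)|$ remains valid. This is immediate from $6^m\in\Z$, but it is the one place where a sign or divisibility slip would derail the whole argument.
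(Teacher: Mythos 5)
Your proposal is correct and follows essentially the same route as the paper: both arguments reduce the first inequality to the refinement property, identify the fiber partition as the interval partition cut at the points $-i\alpha+k/2^m$, and bound the coarseness below by writing a minimal gap as $|q\alpha - p/2^m| = |q|\,|\alpha - p/(2^mq)|$ with $|q|\leq n$. The only (cosmetic) difference is that the paper first passes to the mod-$1$ picture via $\proj_0$ while you work directly in $\R/(6^m\Z)$; your explicit treatment of the $q=0$ case is a welcome bit of extra care that the paper leaves implicit.
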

	\begin{proof}
		Since $\mathcal X_{m,n}$ is a refinement of $\mathcal P_{m,n}$, $\kappa\pi_\alpha(\mathcal P_{m,n})\geq \kappa\pi_\alpha(\mathcal X_{m,n})$
		follows trivially.

		Let $\hat {\mathcal X}_{m,n}$ be the geometric partition generated by 
		the lines $L = \bigcup_{k<2^m} L_{n,\frac{k}{2^m}}^{1}$. Recalling our
		description of ${\mathcal X}_{m,n}$ in terms of lines, we see that $L$ corresponds
		exactly to the image under $\id\times\proj_0$ of the boundaries of the partition
		elements in ${\mathcal X}_{m,n}$. This shows that
		\[
			\kappa\pi_\alpha(\hat {\mathcal X}_{m,n})=
			\kappa\pi_\alpha(\mathcal X_{m,n}).
		\]
		Thus, we will focus our attention on $\hat {\mathcal X}_{m,n}$.  

		Upon inspecting $L$, we see
		partition elements in $\pi_\alpha(\hat {\mathcal X}_{m,n})$
		have endpoints in the set
		$E=\{\frac{k}{2^m},-\alpha+\frac{k}{2^m},\ldots,-n\alpha+\frac{k}{2^m}\mod 1:
			k<2^m\}$.

		Fix $\alpha$ and observe
		$\kappa\pi_\alpha(\hat {\mathcal X}_{m,n})=d$ is now given by the minimum distance between two
		points in $E$, which is 
		\[
			d=|-i\alpha+\tfrac{k}{2^m} - (-j\alpha+\tfrac{k'}{2^m}) + p|
			=|a\alpha+\tfrac{b}{2^m}+p|,
		\]
		for appropriate $p,a,b\in\Z$.
		Since $\frac{d}{a} = |\alpha + \frac{b+p2^m}{a2^m}|$ for some $p\in \Z$,
		$a\leq q$, and $-k< b< k$, the results follow immediately.
	\end{proof}

	Having obtained a lower bound $\ell$ for $\kappa\pi_\alpha(\mathcal P_{m,n})$, we will 
	now bound above the time it 
	takes an orbit to become $\ell$-dense.

	\begin{definition}
		Define\[
			\mathcal G^{a,b} 
			= \{\alpha: |\alpha-\tfrac{p}{q}|>\tfrac{1}{b}\text{ for } q\leq a\text{ and }p,q\in\N\}.
		\]
	\end{definition}

	Note that $\mathcal G^{a,b}$ could be empty, but a simple estimate shows that $\mathcal G^{a,b}$
	is non-empty if $b>a^2$.

	\begin{proposition}\label{PropGDense}
		$\alpha\in \mathcal G^{ka,b}$ implies $\{0,\alpha, 2\alpha, \ldots, 
		(kb-1)\alpha \mod k\}$ is $\frac{1}{a}$-dense in $\R/(k\Z)$.
	\end{proposition}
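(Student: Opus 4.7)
The plan is to extract a near-period for the rotation by $\alpha$ from Dirichlet's approximation theorem, use the hypothesis $\alpha\in\mathcal G^{ka,b}$ to bound the near-period's error from below, and then step through the orbit by this near-period to tile $\R/(k\Z)$ with uniform gaps of length at most $1/a$.

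First, I apply Dirichlet's theorem to $\alpha/k$ with denominator bound $ka$, obtaining $p\in\Z$ and $1\leq q\leq ka$ with $|q\alpha/k-p|\leq 1/(ka)$, equivalently $|\delta|\leq 1/a$ where $\delta:=q\alpha-pk$. The rational $pk/q$ has denominator (in lowest terms) dividing $q\leq ka$, so the hypothesis $\alpha\in\mathcal G^{ka,b}$ yields $|\alpha-pk/q|>1/b$, i.e.\ $|\delta|>q/b$; in particular $\delta\neq 0$ and $q<b/a$.

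Second, since $jq\alpha\equiv j\delta \mod k$ for every $j$, the set $S=\{j\delta \mod k:0\leq j<M\}$ with $M:=\ceil{k/|\delta|}$ is an arithmetic progression on $\R/(k\Z)$ whose consecutive gaps all equal $|\delta|$ and whose wrap-around gap $k-(M-1)|\delta|$ lies in $(0,|\delta|]$. Hence $S$ is $|\delta|$-dense, and a fortiori $1/a$-dense, in $\R/(k\Z)$. To finish I verify that every index $jq$ for $0\leq j<M$ is at most $kb-1$: the largest such index is $(M-1)q\leq(k/|\delta|)q$, and the bound $|\delta|>q/b$ forces $(k/|\delta|)q<kb$, which by integrality gives $(M-1)q\leq kb-1$. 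Thus $S\subseteq\{j\alpha \mod k:0\leq j<kb\}$, and the latter inherits $1/a$-density.

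The only real subtlety is this last index count, which is precisely where the Diophantine lower bound $|\delta|>q/b$ is essential. Without it, Dirichlet alone would permit $|\delta|$ to be so small that tiling the circle with strides of length $|\delta|$ would require more than $kb$ orbit points, and the subset $S$ we use as a witness to density would fail to sit inside the prescribed initial segment of the orbit.
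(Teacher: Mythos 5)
Your proof is correct and follows essentially the same route as the paper's: both extract a near-period $q\le ka$ (you via Dirichlet, the paper via pigeonhole on the smallest $q$ with $\|q\alpha\|_k\le 1/a$), both invoke the hypothesis $\alpha\in\mathcal G^{ka,b}$ to obtain the lower bound $\|q\alpha\|_k>q/b$, and both step around $\R/(k\Z)$ in increments of $q\alpha$ to produce a $1/a$-dense arithmetic progression sitting inside the first $kb$ orbit points. Your accounting of the largest index used, $(M-1)q\le kb-1$, is if anything slightly more careful than the paper's.
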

	\begin{proof}
		For $x\in \R$, let $\|x\|_k$ represent the distance of $x$ from $k\Z$.
		Suppose $\mathcal G^{ka,b}\neq\emptyset$,
		fix $\alpha\in \mathcal G^{ka,b}$, and let $q\in \N$ be the smallest 
		number  such
		that
		\[
			\|q\alpha\|_k \leq \tfrac{1}{a}.
		\]
		Let $p\in\Z$ be such that $\|q\alpha\|_k = |q\alpha - kp|$.
		By the pigeonhole principle, $q \leq ka$.  By assumption,
		we have 
		$|\alpha - \frac{kp}{q}| > \tfrac{1}{b}$ and so
		\[
			\|q\alpha\|_k = |q\alpha - kp| > \tfrac{q}{b}.
		\]
		
		It would suffice to show that the points 
		\begin{align*}
			X&=\{0, \|q\alpha\|_k,
				2\|q\alpha\|_k, \ldots, (\ceil{b/q}k-1)\|q\alpha\|_k\}  \\
			&= 
				\{0,q\alpha, 2q\alpha, \ldots, (\ceil{b/q}k-1)q\alpha\mod k\}
		\end{align*}
		are
		$\frac{1}{a}$-dense since they form a subset of 
		$\{0,\alpha, 2\alpha, \ldots, 
		(kb-1)\alpha \mod k\}$.  Since consecutive points in $X$ are separated by 
		a distance of less that $\frac{1}{a}$, we need only show that the last point
		satisfies $(\ceil{b/q}k-1)\|q\alpha\|_k\geq k-1/a$.  But this is implied by
		the fact that $\frac{b}{q}\|q\alpha\|_k > 1$, which completes the proof.
	\end{proof}

	\begin{proposition}\label{PropGSparse}
		$\alpha\in \mathcal G^{ka,b}$ implies $\{0,\alpha, 2\alpha, \ldots, 
		ka\alpha \mod k\}$ is $\frac{1}{b}$-sparse in $\R/(k\Z)$.  That is,
		no two points are within $1/b$ of each other.
	\end{proposition}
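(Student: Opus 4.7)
The plan is to prove the contrapositive: if some pair of points in $\{0,\alpha,2\alpha,\ldots,ka\alpha \mod k\}$ lies within $1/b$ of each other in $\R/(k\Z)$, then $\alpha$ must be within $1/b$ of a rational $p/q$ with $q\leq ka$, contradicting $\alpha\in\mathcal G^{ka,b}$.

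Concretely, I would suppose for contradiction that there exist indices $0\leq i<j\leq ka$ with
\[
    \|(j-i)\alpha\|_k < \tfrac{1}{b},
\]
where $\|\cdot\|_k$ denotes distance to the nearest multiple of $k$, as in the proof of Proposition \ref{PropGDense}. Setting $m=j-i$, we have $1\leq m\leq ka$, and by definition of $\|\cdot\|_k$ there is an integer $p$ with $|m\alpha - kp|<1/b$. Dividing by $m$ gives
\[
    \left|\alpha - \tfrac{kp}{m}\right| < \tfrac{1}{mb} \leq \tfrac{1}{b}.
\]
Since $m\leq ka$ and $\alpha\in[1/3,2]$ forces $kp/m>0$ (so $kp$ and $m$ are both positive integers), this is a rational approximation of $\alpha$ by $p'/q'$ with $q'=m\leq ka$ and $p'=kp\in\N$. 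This contradicts the defining condition $|\alpha-p'/q'|>1/b$ of $\mathcal G^{ka,b}$.

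There is no real obstacle here beyond the bookkeeping of identifying the relevant rational and verifying that the conditions $p',q'\in\N$ and $q'\leq ka$ in the definition of $\mathcal G^{ka,b}$ are met. The only minor subtlety is handling the denominator rescaling correctly: one must translate ``distance at most $1/b$ in $\R/(k\Z)$ between $i\alpha$ and $j\alpha$'' into a statement about approximating $\alpha$ (not $m\alpha$) by a rational with bounded denominator, and that is exactly what dividing the inequality $|m\alpha-kp|<1/b$ by $m$ accomplishes. The factor of $k$ in the numerator $kp$ is harmless because the definition of $\mathcal G^{ka,b}$ places no upper bound on $p$, only on $q$.
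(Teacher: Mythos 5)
Your proof is correct and is essentially the paper's argument in contrapositive form: both reduce the claim to bounding $\|m\alpha\|_k$ for $0<m\leq ka$ and then divide by $m$ to turn this into a rational approximation of $\alpha$ with denominator at most $ka$, which the definition of $\mathcal G^{ka,b}$ forbids from being within $1/b$. No substantive difference from the paper's proof.
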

	\begin{proof}
		Suppose $\mathcal G^{ka,b}\neq\emptyset$.
		Fix $\alpha\in \mathcal G^{ka,b}$ and note that to prove 
		$\frac{1}{b}$-sparsity of
		$\{0,\alpha, 2\alpha, \ldots, ka\alpha \mod k\}$ we only need
		to show $\|r\alpha\|_k > \frac{1}{b}$ for all $0<r\leq ka$.
		
		Choose $p,q$ to minimize 
		$|q\alpha-kp|$ subject to $0<q\leq ka$.
		We then have that $\|r\alpha\|_k$ is minimized by
		\[
			\|q\alpha\|_k = |q\alpha-kp| \geq |\alpha-\tfrac{kp}{q}| > \tfrac{1}{b},
		\]
		with the last inequality following by assumption.
	\end{proof}

	The previous propositions show a symmetry in $\mathcal G^{a,b}$.
	Namely, if $\alpha\in \mathcal G^{a,b}$, then the $b$-orbit of rotation
	by $\alpha$ is $1/a$-dense and the $(a+1)$-orbit of rotation by $\alpha$ is
	$1/b$-sparse.

	\begin{corollary}\label{PropBiggerThanOneOverB}
		If $\alpha\in \mathcal G^{2^m n,b}$ then $\kappa\pi_{\alpha}(\mathcal X_{m,n}) > 
		\frac{1}{b}$.
	\end{corollary}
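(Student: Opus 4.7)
The plan is to combine Proposition \ref{PropXSpacing} with the definition of $\mathcal G^{2^m n,b}$ directly, without any additional machinery. Proposition \ref{PropXSpacing} already gives the lower bound
\[
\kappa\pi_\alpha(\mathcal X_{m,n}) \geq \min\left\{\left|\alpha - \tfrac{p}{2^m q}\right| : q \leq n\right\},
\]
so the task reduces to showing that every rational of the form $p/(2^m q)$ with $q\leq n$ sits at distance strictly greater than $1/b$ from $\alpha$.

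The key observation is a change of denominator: any such rational $p/(2^m q)$ may be written as $p/Q$ with $Q = 2^m q$, and this new denominator satisfies $Q \leq 2^m n$. The hypothesis $\alpha \in \mathcal G^{2^m n,b}$ says exactly that $|\alpha - p/Q| > 1/b$ whenever $Q \leq 2^m n$, so taking the minimum over $q \leq n$ (and over $p$) yields the desired strict inequality. This produces $\kappa\pi_\alpha(\mathcal X_{m,n}) > 1/b$.

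I do not anticipate any real obstacles, as the corollary is little more than a translation of the definition of $\mathcal G^{2^m n,b}$ into the coarseness bound supplied by Proposition \ref{PropXSpacing}. The only mild point to keep in mind is that $\mathcal G^{a,b}$ is phrased for $p,q \in \mathbb{N}$ while the bound from Proposition \ref{PropXSpacing} may a priori involve $p\in\mathbb{Z}$; however, since $\alpha \in [1/3,2]$, any negative $p$ gives $|\alpha - p/(2^m q)| \geq 1/3$, which is harmless in the regime where $1/b < 1/3$ (and the conclusion is trivial otherwise). Thus the bound is clean and immediate.
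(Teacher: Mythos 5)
Your proposal is correct and follows essentially the same route as the paper: invoke Proposition \ref{PropXSpacing} for the lower bound $\min\{|\alpha - \frac{p}{2^mq}| : q\leq n\}$ and then observe that each such rational has denominator $2^m q \leq 2^m n$, so the defining inequality of $\mathcal G^{2^m n,b}$ applies. Your added remark about the sign of $p$ is a minor point of care the paper leaves implicit, but it does not change the argument.
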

	\begin{proof}
		Fix $\alpha\in \mathcal G^{2^mn,b}$.
		By Proposition \ref{PropXSpacing}, 
		$\kappa \pi_\alpha(\mathcal X_{m,n})\geq 
		\min\{|\alpha - \frac{p}{2^mq}|:q\leq n\text{ and }p,q\in\N\}$.
		By the assumption that $\alpha\in \mathcal G^{2^mn,b}$, we have
		$|\alpha - \frac{p}{2^mq}|> \frac{1}{b}$.
	\end{proof}

	\begin{proposition}\label{PropWaitingTime}
		If $\alpha\in\mathcal G^{2^mn,b}\cap \mathcal G^{6^mb,c}$ then
		\[
			D^m_{\kappa\pi_\alpha(\mathcal X_{m,n})}(\alpha) \leq 6^mc.
		\]
	\end{proposition}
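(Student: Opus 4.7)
The proof proposal is essentially a direct splicing of the two immediately preceding results, so no substantial obstacle is anticipated; the task is mainly to align parameters correctly.

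First, I would unpack the definition of $D^m_\ell(\alpha)$: I need to exhibit an upper bound $N$ such that for every $t$, the set $\proj_m(\Orb_{\hat T}^N(\alpha,t))$ is $\kappa\pi_\alpha(\mathcal{X}_{m,n})$-dense in $\R/(6^m\Z)$. Since $\hat T$ acts on the second coordinate by adding $\alpha$, the projection to the $m$-th coordinate of the $N$-orbit is exactly $\{t + j\alpha \mod 6^m : 0 \le j < N\}$, so the question reduces to the density of an arithmetic progression with step $\alpha$ in $\R/(6^m\Z)$.

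Next, I would use the hypothesis $\alpha \in \mathcal{G}^{2^m n, b}$ together with Corollary \ref{PropBiggerThanOneOverB} to conclude that the coarseness satisfies
\[
  \kappa\pi_\alpha(\mathcal{X}_{m,n}) > \tfrac{1}{b}.
\]
So it is enough to show the $6^m c$-orbit is $\tfrac{1}{b}$-dense in $\R/(6^m\Z)$.

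Finally, I would apply Proposition \ref{PropGDense} with the substitution $k = 6^m$, $a = b$, and (the role of the second parameter) $b \mapsto c$. The hypothesis $\alpha \in \mathcal{G}^{6^m b, c}$ matches $\mathcal{G}^{ka, c}$, and the proposition then yields that $\{0, \alpha, 2\alpha, \ldots, (6^m c - 1)\alpha \mod 6^m\}$ is $\tfrac{1}{b}$-dense in $\R/(6^m\Z)$. Since this density is strictly finer than $\kappa\pi_\alpha(\mathcal{X}_{m,n})$, the orbit meets every partition element of $\pi_\alpha(\mathcal{X}_{m,n})$, giving $D^m_{\kappa\pi_\alpha(\mathcal{X}_{m,n})}(\alpha) \le 6^m c$ as required. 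The only thing to be a bit careful about is a translation by the fixed $t$, but density in $\R/(6^m\Z)$ is translation-invariant, so this causes no issue.
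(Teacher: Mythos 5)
Your proposal is correct and follows essentially the same route as the paper: it invokes Corollary \ref{PropBiggerThanOneOverB} to get $\kappa\pi_\alpha(\mathcal X_{m,n})>\tfrac{1}{b}$ and then applies Proposition \ref{PropGDense} with $k=6^m$ to see that the $6^mc$-orbit is $\tfrac{1}{b}$-dense, hence meets every partition element. The remark about translation-invariance of density in the second coordinate is a detail the paper disposes of immediately after defining $D^i_\ell(\alpha)$, so nothing is missing.
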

	\begin{proof}
		Fix $\alpha\in\mathcal G^{2^mn,b}\cap \mathcal G^{6^mb,c}$.
		Since $\alpha\in \mathcal G^{2^mn,b}$, Corollary \ref{PropBiggerThanOneOverB}
		implies $\kappa\pi_\alpha(\mathcal X_{m,n})>\frac{1}{b}$
		and so $\kappa\pi_\alpha(\proj_m(\mathcal X_{m,n}))>\frac{1}{b}$.
		By Proposition \ref{PropGDense} applied to $\mathcal G^{6^mb,c}$, we have that 
		$E=\{0,\alpha, \ldots, (6^mc-1)\alpha\mod 6^m\}$ is $\frac{1}{b}$-dense
		in $[0,6^m]$, and so $E$ intersects every partition
		element of $\pi_\alpha(\mathcal X_{m,n})$, which completes the proof.
	\end{proof}

	We have identified $\alpha$'s that give us good return times, but
	$\mathcal G^{2^mn,b}\cap\mathcal G^{6^mb,c}$ could be empty.
	Next we will find constraints on $b,c$ to avoid this and guarantee
	us
	some useful properties.

	\begin{definition}
		Given a set $X$ and a collection of sets $\mathcal C$, we say $(X,\mathcal C)$
		has the \emph{intersection property} if for all $I\in \mathcal C$,
		$X\cap I\neq \emptyset$. 
		If $X\subset \R$, we say $X$ is \emph{$\delta$-fat} relative to $\mathcal C$
		if for all $I\in \mathcal C$, $X\cap I$ contains an interval of width $\delta$.
	\end{definition}
	\begin{definition}
		Let $\mathcal F_n$ be the partition of $\R$ whose elements are
		of the form $[a,b)$ where $a,b$ are consecutive points in $\{\frac{p}{q}:q\leq n\}$.
		That is $\mathcal F_n$ is the partition of $\R$ into half-open intervals whose endpoints
		are consecutive Farey fractions with denominator bounded by $n$.
	\end{definition}

	\begin{proposition}\label{PropFareyGoodEnough}
		Let $p:[1/3,2]\times\T\to [1/3,2]$ be projection onto the first coordinate.
		Let $X\subset \R$. If $(X,\mathcal F_{2^m n})$ has the intersection property, then for any element
		$E\in \mathcal X_{m,n}$, $X\cap p(E)\neq \emptyset$.
	\end{proposition}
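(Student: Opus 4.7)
The plan is to reduce the statement to a purely geometric fact about where the boundary lines defining $\mathcal X_{m,n}$ intersect as $\alpha$ varies. Recall that $\mathcal X_{m,n}$ is the $\mathscr B_m$-measurable extension of the geometric partition of $[1/3,2]\times\R/(6^m\Z)$ generated by the lines
\[
    L = \bigcup_{k<2^m6^m} L_{n,k/2^m}^{6^m}.
\]
Every line in $L$ has slope in $\{0,-1,-2,\ldots,-n\}$ and has a $y$-intercept of the form $k/2^m$ modulo $6^m$. Since the first step is to understand $p(E)$, I would fix a connected component $E$ in the complement of $L$ and ask: as $\alpha$ moves across $p(E)$, what determines the endpoints of $p(E)$?

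The second step is to observe that between consecutive $\alpha$-values at which two lines of $L$ intersect, the cyclic order of the $y$-intercepts of the lines of $L$ along the fiber $\{\alpha\}\times\R/(6^m\Z)$ is constant, so the combinatorial structure of the fiber-partition does not change. Consequently, $p(E)$ is an open sub-interval of $[1/3,2]$ whose endpoints are either the endpoints of $[1/3,2]$ or \emph{critical} $\alpha$-values at which two lines of $L$ cross. A routine computation shows that two lines $y=-i\alpha+k/2^m$ and $y=-j\alpha+k'/2^m$ (viewed mod $6^m$) meet at $\alpha$-values of the form
\[
    \alpha = \frac{k'-k+2^m6^m\ell}{2^m(j-i)},
\]
which is a rational $p/q$ with denominator $q\leq 2^m n$. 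Hence every critical $\alpha$-value lies in $\{p/q:q\leq 2^m n\}$, i.e.\ is an endpoint of some Farey interval in $\mathcal F_{2^m n}$.

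The third step is to conclude: since the partition of $[1/3,2]$ into critical intervals is a \emph{coarsening} of the Farey partition $\mathcal F_{2^m n}$, each $p(E)$ is a union of elements of $\mathcal F_{2^m n}$, and in particular contains at least one $I\in \mathcal F_{2^m n}$. The intersection property then immediately gives $X\cap p(E)\supset X\cap I\neq\emptyset$.

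The only place requiring real care is the second step: I need to justify that a 2D connected component $E$ of the complement of $L$ really projects onto a full critical interval, rather than being truncated within one. This reduces to the observation that each line in $L$ extends across the entire strip $[1/3,2]\times \R/(6^m\Z)$, so the combinatorial type of the fiber partition is locally constant in $\alpha$ away from critical values and $E$ cannot terminate inside a critical interval. Once this is verified the rest is bookkeeping.
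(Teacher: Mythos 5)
Your argument is correct and is essentially the paper's own proof in slightly more detail: both reduce to the geometric line arrangement defining $\mathcal X_{m,n}$, observe that the $\alpha$-coordinates of the crossing points (the corners of the polygonal partition elements) are rationals of the form $\frac{p}{2^m q}$ with $q\leq n$, and conclude that each element's projection contains a full interval of $\mathcal F_{2^m n}$, so the intersection property applies. The paper carries this out at the $\proj_0$ level rather than in $\R/(6^m\Z)$, but that is a cosmetic difference, and your explicit computation of the critical $\alpha$-values is exactly the content the paper compresses into the phrase ``corners have coordinates of the form $\frac{p}{2^m q}$.''
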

	\begin{proof}
		Let $\hat {\mathcal X}_{m,n}=\id\times\proj_0(\mathcal X_{m,n})$ 
		and note it is sufficient
		to show that if $(X,\mathcal F_{2^m n})$ has the intersection
		property, then for any element
		$E\in\hat {\mathcal X}_{m,n}$, we have $X\cap p(E)\neq\emptyset$.

		Recalling the description of $\hat {\mathcal X}_{m,n}$ 
		in terms of lines, we see
		that $\hat {\mathcal X}_{m,n}$ consists of 
		polygonal regions whose corners have coordinates of the form 
		$\frac{p}{2^mq}$ for some $q\leq n$.  
		Since every element of $\hat {\mathcal X}_{m,n}$ 
		contains an open set, we see that for all $P\in \hat {\mathcal X}_{m,n}$, 
		there exists
		$I\in \mathcal F_{2^mn}$ so that $I\subset p(P)$ (possibly ignoring
		some points along the boundary of $P$), which completes the proof.
	\end{proof}

	\begin{proposition}\label{PropIntersectProp}
		If $b \geq 4a^2$, $c^2\geq 4b$, and $d\geq 4c^2$ then 
		$\mathcal G^{a,b}\cap \mathcal G^{c,d}$ is $\frac{2}{d}$-fat
		relative to $\mathcal F_a$.
	\end{proposition}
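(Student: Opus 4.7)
The plan is to fix a Farey interval $I = [p_1/q_1, p_2/q_2) \in \mathcal F_a$ (so $|I| = 1/(q_1 q_2) \geq 1/a^2$) and exhibit inside every such $I$ a sub-interval of $\mathcal G^{a,b} \cap \mathcal G^{c,d}$ of length at least $2/d$. First I would observe that $\mathcal G^{a,b} \cap \mathcal G^{c,d}$ removes from $I$ open balls of radius $1/b$ around each rational of denominator $\le a$ and open balls of radius $1/d$ around each rational of denominator in $(a,c]$; the chained hypotheses give $d \geq 16b$, so a $1/b$-ball already swallows any concentric $1/d$-ball. Since $I$ is a Farey interval of order $a$, the only rationals of denominator $\le a$ in $\bar I$ are the endpoints $r_0 = p_1/q_1$ and $r_k = p_2/q_2$. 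I enumerate the Farey fractions of order $c$ in $\bar I$ as $r_0 < r_1 < \cdots < r_k$, so the problem reduces to finding a long gap between removal centers of this form.

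I would then split into three cases on $k$, exploiting the fact that consecutive Farey fractions of order $c$ are at least $1/c^2$ apart. When $k \geq 3$, I pick any fully interior sub-interval $[r_i, r_{i+1})$ with $0 < i < k-1$; only two $1/d$-balls touch it, leaving a gap of length at least $1/c^2 - 2/d \geq 1/(2c^2) \geq 2/d$, the last inequality being exactly $d \geq 4c^2$. When $k = 1$, the sub-interval is $I$ itself with only two $1/b$-balls removed, so the gap is at least $1/a^2 - 2/b \geq 1/(2a^2)$ using $b \geq 4a^2$; the chain $d \geq 4c^2 \geq 16b \geq 64a^2$ then yields $1/(2a^2) \geq 2/d$.

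The delicate case is $k = 2$, where both sub-intervals border an endpoint carrying the larger $1/b$-ball, so neither sub-interval is fully interior. The mediant structure of consecutive Farey fractions makes $r_1 = (p_1+p_2)/(q_1+q_2)$, giving sub-interval lengths $1/(q_1(q_1+q_2))$ and $1/(q_2(q_1+q_2))$; since $q_1, q_2 \le a$ forces $q_1 + q_2 \leq 2a$, each is at least $1/(2a^2)$. Removing one $1/b$-ball at the endpoint and one $1/d$-ball at $r_1$ leaves a gap of length at least $1/(2a^2) - 1/b - 1/d \geq 1/(4a^2) - 1/d$, which exceeds $2/d$ provided $d \geq 12 a^2$, and $d \geq 64 a^2$ is already in hand. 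The main obstacle is precisely this sub-case: I must verify that the chain $b \geq 4a^2 \Rightarrow c^2 \geq 4b \Rightarrow d \geq 4c^2$ composes to $d \geq 64a^2$ with exactly enough slack for $1/(4a^2)$ to dominate the residual $3/d$, so that no weaker fatness constant is forced. Once this bookkeeping is in place, the $2/d$-fat sub-interval is produced uniformly in every $I \in \mathcal F_a$.
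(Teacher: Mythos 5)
Your cases $k=1$ and $k=2$ check out (and your mediant computation for $k=2$ is correct), but the main case $k\geq 3$ contains a genuine gap: the assertion that ``only two $1/d$-balls touch'' an arbitrary fully interior sub-interval $[r_i,r_{i+1})$ ignores the two $1/b$-balls centered at the Farey endpoints $r_0,r_k$ of $I$. The first interior order-$c$ fraction satisfies $r_1-r_0=1/(q_1q)\geq 1/(ac)$, and nothing in the hypotheses forces $1/b\leq 1/(ac)$: the chain only bounds $c$ from below ($c\geq 2\sqrt b$), so $c$, and hence $ac$, may be arbitrarily large compared to $b$. Concretely, take $a=2$, $b=16$, $c=400$, $d=4c^2$, and $I=[0,1/2)\in\mathcal F_2$: the ball $(-1/16,1/16)$ removed by $\mathcal G^{a,b}$ contains every fraction $1/q$ with $17\leq q\leq 400$, so dozens of the fully interior sub-intervals $[r_i,r_{i+1})$ near $r_0$ lie entirely inside a removed $1/b$-ball, and the interval $[r_i+1/d,\,r_{i+1}-1/d]$ you exhibit is disjoint from $\mathcal G^{a,b}\cap\mathcal G^{c,d}$. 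So ``pick any fully interior sub-interval'' fails; you must pick one clearing the $1/b$-balls, and you must also handle the possibility that the surviving window $[r_0+1/b,\,r_k-1/b]$ (of width at least $1/(2a^2)\geq 2/b$) contains at most one order-$c$ fraction, in which case no $[r_i,r_{i+1}]$ fits inside it even though $k\geq3$. (Your closing remark that $k=2$ is the delicate sub-case is therefore misdirected --- that case is fine; it is $k\geq3$ that breaks.)

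The repair is essentially the paper's argument, which performs the two removals sequentially rather than stratifying by $k$: first intersect $I$ with $\mathcal G^{a,b}$ to obtain a single surviving interval $J$ of width at least $1/a^2-2/b\geq 1/(2a^2)>1/(2b)$; then remove the $1/d$-balls from $J$, using that consecutive centers are at least $1/c^2$ apart, so $\mathcal G^{c,d}$ has gaps of width $2/d\leq 1/(2c^2)$ and components of width at least $1/c^2-2/d\geq 1/(2c^2)$. A short dichotomy (either $J$ contains two consecutive centers, or it meets at most one gap) then yields a surviving sub-interval of width at least $\min\left\{1/(2c^2),\,1/(2b)-1/c^2\right\}\geq 1/(2c^2)\geq 2/d$. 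Once organized this way, no Farey-structure analysis beyond the $1/a^2$ and $1/c^2$ spacing bounds is needed.
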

	\begin{proof}
		By definition $\mathcal G^{x,y}$ is constructed by removing balls
		of radius $1/y$ centered at points $\frac{p}{q}$ with $q\leq x$.
		If $q,q'\leq x$, then $|\frac{p}{q}-\frac{p'}{q'}|>\frac{1}{x^2}$.
		Thus, if $y> 4x^2$, not only will $\mathcal G^{x,y}$ intersect
		every element of $\mathcal F_x$, but it will do so with diameter at least
		\[
			\frac{1}{x^2}-\frac{2}{y} = \frac{1}{2x^2}.
		\]

		Suppose $a,b,c,d$ satisfy $b \geq 4a^2$, $c^2\geq 4b$, and $d\geq 4c^2$.
		Every gap in $\mathcal G^{c,d}$ is of size
		$\frac{2}{d}<\frac{1}{2c^2}$ and every interval in $\mathcal G^{c,d}$
		has size at least $\frac{1}{2c^2}$.  Thus, the intersection of 
		$\mathcal G^{c,d}$ with an interval of width $\frac{1}{2b}$ must contain an
		interval of width at least
		\[
			\min\left\{\frac{1}{2c^2},\frac{1}{2b} - \frac{2}{2c^2}\right\}
			\geq 
			\min\left\{\frac{1}{2c^2},\frac{2}{c^2} - \frac{1}{c^2}\right\}
			=\frac{1}{2c^2}\geq \frac{2}{d}.
		\]
		Noticing that the smallest interval in $\mathcal G^{a,b}$
		is of size at least $\frac{2}{b}>\frac{1}{2b}$ completes the proof.
	\end{proof}

	We can now identify a set of $\alpha$'s that have good waiting times.

	\begin{definition}
		Let $\mathcal W_{n\times m} = \mathcal G^{a,b}\cap \mathcal G^{c,d}$ where $a=2^mn$, 
		$b=2^{2m+2}n^2$, $c=6^{m}2^{2m+2}n^2$, and $d=6^{4m+3}n^4$.
	\end{definition}

	Notice that the parameters $a,b,c,d$ in $\mathcal W_{n\times m}$
	were carefully chosen to satisfy the conditions of Proposition \ref{PropIntersectProp}
	and Proposition \ref{PropWaitingTime}.

	\begin{theorem}\label{PropHorizWaitingTime}
		Let $c$ be an $n\times m$ configuration in $KC$ and $A=\{0,\ldots, m-1\}
		\times \{0,\ldots, n-1\}$.  Then there exists
		an interval $I_c\subset  \mathcal W_{n\times m}$ of width $2/(6^{4m+3}n^4)$
		so that for every $\alpha\in I_c$ and every $t\in \T$,
		there exists a $j<6^{5m+3}n^4$ so that
		\[
			K\circ \hat T^j (\alpha, t)|_A = c.
		\]
	\end{theorem}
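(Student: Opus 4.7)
The plan is to find, for the fixed configuration $c$, an interval of ``good'' angles $I_c$ such that every $\alpha\in I_c$ lies in $\mathcal{W}_{n\times m}$ (so Proposition \ref{PropWaitingTime} applies) and such that the configuration $c$ actually appears at some horizontal position starting from any $t\in\T$.

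First, since $c$ appears as an $m\times n$ sub-configuration of some element of $KC$, there is a partition element $E_c\in\mathcal{P}_{m,n}$ consisting of exactly those $(\alpha,t)$ with $K(\alpha,t)|_A=c$. Because $\mathcal{X}_{m,n}$ refines $\mathcal{P}_{m,n}$, I choose some $P\in\mathcal{X}_{m,n}$ with $P\subseteq E_c$. Recalling the line-geometry description used in the proof of Proposition \ref{PropFareyGoodEnough}, $P$ has nonempty interior and its vertices have $\alpha$-coordinates in $\{\tfrac{p}{2^mq}:q\leq n\}$, so $p(P)$ is a non-degenerate interval with endpoints in the Farey set of denominator at most $2^m n$. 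Hence $p(P)$ contains some $I\in\mathcal{F}_{2^m n}$.

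Next, I verify that the four parameters defining $\mathcal{W}_{n\times m}$ satisfy the hypotheses of Proposition \ref{PropIntersectProp}: the inequality $2^{2m+2}n^2\geq 4(2^m n)^2$ holds with equality, $(6^m\cdot 2^{2m+2}n^2)^2\geq 4\cdot 2^{2m+2}n^2$ is trivial, and $6^{4m+3}n^4\geq 4(6^m\cdot 2^{2m+2}n^2)^2$ reduces to $6^{2m+3}\geq 2^{4m+6}$, which holds for every $m\geq 0$. Hence $\mathcal{W}_{n\times m}$ is $\tfrac{2}{6^{4m+3}n^4}$-fat relative to $\mathcal{F}_{2^m n}$, and I let $I_c$ be an interval of width $\tfrac{2}{6^{4m+3}n^4}$ contained in $\mathcal{W}_{n\times m}\cap I\subseteq \mathcal{W}_{n\times m}\cap p(P)$.

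For the conclusion, fix any $\alpha\in I_c$ and $t\in\T$. Since $\alpha\in\mathcal{W}_{n\times m}$, Proposition \ref{PropWaitingTime} yields $D^m_{\kappa\pi_\alpha(\mathcal{X}_{m,n})}(\alpha)\leq 6^m\cdot 6^{4m+3}n^4 = 6^{5m+3}n^4$. Thus the orbit $\{\hat T^j(\alpha,t):0\leq j<6^{5m+3}n^4\}$ intersects every element of $\pi_\alpha(\mathcal{X}_{m,n})$, and in particular $\pi_\alpha(P)$, which is non-empty because $\alpha\in p(P)$. For such a hitting index $j$, $\hat T^j(\alpha,t)\in P\subseteq E_c$, so $K\circ\hat T^j(\alpha,t)|_A=c$. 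The main obstacle, such as it is, is choosing $P\in\mathcal{X}_{m,n}$ inside $E_c$ whose $\alpha$-projection fully contains a Farey interval in $\mathcal{F}_{2^m n}$; this is forced by the polygonal structure of $\mathcal{X}_{m,n}$ noted above, so the argument reduces to bookkeeping with the already-established propositions.
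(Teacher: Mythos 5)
Your proof is correct and follows essentially the same route as the paper: establish $2/(6^{4m+3}n^4)$-fatness of $\mathcal W_{n\times m}$ relative to $\mathcal F_{2^mn}$ via Proposition \ref{PropIntersectProp}, locate $I_c$ inside the $\alpha$-projection of a partition element realizing $c$ via Proposition \ref{PropFareyGoodEnough}, and then invoke Proposition \ref{PropWaitingTime} to bound the $\hat T$-return time by $6^{5m+3}n^4$. Your explicit choice of an element $P\in\mathcal X_{m,n}$ with $P\subseteq E_c$ and your verification of the parameter inequalities for $\mathcal W_{n\times m}$ simply spell out steps the paper leaves implicit.
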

	\begin{proof}
		Given the framework we have established, the proof is straightforward.

		Proposition \ref{PropIntersectProp} tells us that $\mathcal W_{n\times m}$ is
		$2/(6^{4m+3}n^4)$-fat relative to $\mathcal F_{2^mn}$,
		and so by Proposition
		\ref{PropFareyGoodEnough}, we have that there exists
		an interval $I_c\subset  \mathcal W_{n\times m}$ of width $2/(6^{4m+3}n^4)$
		so that for every $\alpha\in I_c$ there exists $t\in \T$ so
		$K(\alpha, t)|_A = c$.
		
		Fix $I_c$ and $\alpha\in I_c$.
		By Proposition \ref{PropWaitingTime},
		\[
			D_{\kappa\pi_\alpha(\mathcal X_{m,n})}^m(\alpha) \leq 6^m6^{4m+3}n^4 = 6^{5m+3}n^4,
		\]
		and so we will see $c$ in less than $6^{5m+3}n^4$ applications of $\hat T$,
		which completes the proof.
	\end{proof}

	Theorem \ref{PropHorizWaitingTime} gives the bulk of the proof of
	Theorem \ref{ThmKCBounds}.
	If we have an $n\times m$ configuration $c$ in mind, we know there is
	an open interval $I_c$ of angle parameters where we will see
	$c$ in a horizontal orbit of no more than $6^{5m+3}n^4$ steps.
	Since orbits under $\hat S$ are dense in the first coordinate, we
	know that if we bound how long it takes for an $\hat S$-orbit
	(equivalently an $f$-orbit)
	to become $|I_c|$-dense, we have a bound on the minimum size
	of a rectangle that contains the configuration $c$.
\subsection{Asymptotic Density of Orbits Under $f$}
	\begin{definition}[Irrationality Measure]
		For a number $\alpha\in\R$, the \emph{irrationality measure}
		of $\alpha$ is
		\[
			\eta(\alpha) = 
			\inf\left\{\gamma: \left|\alpha-\frac{p}{q}\right|<\frac{1}{q^\gamma}
			\text{ for only finitely many }p,q\in\Z\right\}.
		\]
	\end{definition}

	\begin{proposition}[Rhin \cite{rhin}]\label{PropRhin}
		For $u_0,u_1,u_2\in\Z$ and $H=\max\{|u_1|,|u_2|\}$, we have
		that if $H$ is sufficiently large,
		\[
			|u_0+u_1\log 2 + u_2\log 3| \geq H^{-7.616},
		\]
		and if $H\geq 2$, we have the universal bound
		\[
			|u_0+u_1\log 2 + u_2\log 3| \geq H^{-13.3}.
		\]

	\end{proposition}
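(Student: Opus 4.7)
The plan is to quote Rhin's theorem directly rather than reconstruct its proof, since this is a deep result in transcendence theory specialized to the pair $(\log 2, \log 3)$. Concretely, I would cite the relevant estimate from \cite{rhin} and then do the (short) bookkeeping needed to extract the two forms of the bound stated above. Any attempt at a self-contained proof would involve Baker's method on linear forms in logarithms, so it is genuinely outside the scope of this paper.

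For the record, Rhin's approach proceeds by the standard transcendence-theoretic route: one constructs an auxiliary Pad\'e-type approximant to the logarithm evaluated at $2$ and $3$, controls its value and derivatives arithmetically, and then uses a zero-estimate or interpolation determinant to derive a contradiction from the assumption that $|u_0 + u_1 \log 2 + u_2 \log 3|$ is too small relative to $H = \max\{|u_1|,|u_2|\}$. The exponent $7.616$ emerges from optimizing the degree, height, and order-of-vanishing parameters in that construction, and is sharper than what Baker's general theorem for two logarithms gives because it exploits specific features of $\log 2$ and $\log 3$ (notably their small heights and the explicit arithmetic of rationals with denominators divisible only by $2$ and $3$).

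The second ``universal'' bound $H^{-13.3}$ for $H \geq 2$ is then just a matter of absorbing the implicit constant in the asymptotic statement. The plan is: first fix the threshold $H_0$ beyond which Rhin's asymptotic estimate applies; on the finite range $2 \leq H < H_0$ there are only finitely many triples $(u_0, u_1, u_2)$ that could possibly violate an $H^{-c}$ bound (since $|u_0|$ may be assumed comparable to $H|\log 3|$, else the linear form is trivially large), and for each such triple the linear form is a fixed nonzero real number bounded below by an explicit positive constant. Choosing the exponent $13.3$ large enough that $H^{-13.3}$ lies below every one of these finitely many values, and also below the asymptotic $H^{-7.616}$ bound wherever the latter holds, yields the universal statement.

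The only ``obstacle'' here is notational: matching the formulation in \cite{rhin} (which is typically stated with heights of algebraic numbers and general linear forms) to the concrete setup with integer coefficients and the specific logarithms $\log 2, \log 3$. No real analytic or number-theoretic work is done in this paper; the proposition is used as a black box in the subsequent asymptotic density estimates for orbits under $f$.
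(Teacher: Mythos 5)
Your approach matches the paper exactly: this proposition is an external result quoted from Rhin's work on Pad\'e approximants and effective irrationality measures, and the paper supplies no proof of its own, only the citation. One small note: both the asymptotic exponent $7.616$ and the universal exponent $13.3$ for $H\geq 2$ appear explicitly in Rhin's paper, so your proposed finite-range check to derive the second bound from the first is unnecessary (and would in any case require explicit lower bounds on the finitely many exceptional linear forms, which is essentially the content of the effective result itself).
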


	\begin{corollary}\label{PropIrrationalityMeasure}
		$\eta(\log 2/\log 6) \leq 8.616$ and $\left|\frac{\log 2}{\log 6}-\frac{p}{q}\right|\geq \frac{1/\log 6}{q^{14.3}}$
		if $q\geq 2$.
	\end{corollary}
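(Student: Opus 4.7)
The plan is to rewrite $\log 2/\log 6 - p/q$ as a $\Z$-linear combination of $\log 2$ and $\log 3$ and invoke Proposition \ref{PropRhin}. Specifically, since $\log 6 = \log 2 + \log 3$,
\[
\frac{\log 2}{\log 6} - \frac{p}{q} = \frac{q\log 2 - p(\log 2 + \log 3)}{q\log 6} = \frac{(q-p)\log 2 - p\log 3}{q\log 6},
\]
so the numerator matches Rhin's linear form with $u_0=0$, $u_1 = q-p$, $u_2 = -p$.

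First I would dispose of trivial ranges of $p$. If $p\le 0$ or $p\ge q$, then $|\log 2/\log 6 - p/q|$ is bounded below by a positive constant (roughly $0.38$ or $0.61$), so both inequalities hold for $q\ge 2$ with room to spare. Hence one may assume $0 < p < q$, in which case $H=\max\{|q-p|,|p|\} \le q - 1 < q$.

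For the irrationality-measure bound, take $q$ large enough that $H$ falls in the asymptotic regime of Proposition \ref{PropRhin}. Then
\[
\left|\frac{\log 2}{\log 6} - \frac{p}{q}\right| \ge \frac{H^{-7.616}}{q\log 6} \ge \frac{1}{q^{8.616}\log 6}.
\]
If $\eta(\log 2/\log 6) > 8.616$, there would be infinitely many $(p,q)$ with $|\log 2/\log 6 - p/q| < q^{-8.616-\varepsilon}$ for some $\varepsilon>0$, contradicting the display for all but finitely many $q$. Hence $\eta(\log 2/\log 6)\le 8.616$.

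For the universal bound, use the second part of Proposition \ref{PropRhin}, which applies whenever $H\ge 2$. Under $0<p<q$ and $q\ge 3$ one has $H\ge 2$ automatically (since $\log 2/\log 6\approx 0.386$ forces one of $p$ or $q-p$ to be at least $2$), so
\[
\left|\frac{\log 2}{\log 6} - \frac{p}{q}\right| \ge \frac{H^{-13.3}}{q\log 6} \ge \frac{1/\log 6}{q^{14.3}}.
\]
The only remaining case is $q=2$, $p=1$, where the left side equals approximately $0.114$ and comfortably dominates $(1/\log 6)\cdot 2^{-14.3}$ by direct computation. The main thing to be careful about is the small-$H$ bookkeeping — ensuring the finitely many exceptional pairs $(p,q)$ with $H<2$ are checked by hand — since Rhin's two inequalities both require $H$ not to be too small, and the whole reduction rests on the harmless-looking inequality $H\le q$.
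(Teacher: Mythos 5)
Your proposal is correct and follows essentially the same route as the paper: rewrite $\left|\frac{\log 2}{\log 6}-\frac{p}{q}\right| \cdot q\log 6$ as $|(q-p)\log 2 - p\log 3|$, apply Rhin's bounds with $H=\max\{|q-p|,|p|\}\le q$, and conclude. Your extra bookkeeping for the small-$H$ and out-of-range-$p$ cases is a welcome refinement of a step the paper leaves implicit, but it does not change the argument.
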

	\begin{proof}
		Let $x=\left|\frac{\log 2}{\log 6}-\frac{p}{q}\right|$.  By algebraic manipulation,
		we deduce
		\[
			xq\log 6 = |(q-p)\log 2 - p\log 3|.
		\]
		And so by Proposition \ref{PropRhin} and the fact that 
		$\max\{|q-p|,|p|\} \leq q$, we have that asymptotically, $xq\log 6 \geq q^{-7.616}$, which produces a bound
		of 
		$
			x \geq \frac{1/\log 6}{q^{8.616}}.
		$
		Alternatively, we may use the bound $xq\log 6 \geq q^{-13.3}$, which holds for all $q\geq 2$.
	\end{proof}

	\begin{proposition}\label{PropfWaitingTime}
		Fix $\delta > 0$ and
		let
		$k_\ell \geq (\frac{3}{\ell\log 6})^{8.616 + \delta}$.
		Then, for sufficiently small $\ell$, the $k_\ell$-orbit of any $x\in [1/3,2]$ under $f$ is $\ell$-dense.  That is
		\[
			\{x,f(x),f^2(x),\ldots f^{k_\ell-1}(x)\}
		\]
		is $\ell$-dense for any $x\in[1/3,2]$.  Further, if $k_\ell \geq (\frac{3}{\ell\log 6})^{14.3}\log 6$ and $1/\ell\geq 2$, then
		the $k_\ell$ orbit of any point $x\in[1/3,2]$ under $f$ is $\ell$-dense.
	\end{proposition}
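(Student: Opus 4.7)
The strategy is to lift the problem from $f$ acting on $[1/3,2]$ to the irrational rotation $R_\beta$, $\beta=\log 2/\log 6$, on $\R/\Z$ via the explicit conjugacy $\phi(x)=\log(3x)/\log 6$ from Proposition \ref{PropfAperiodic}, and then to combine the arithmetic machinery of Proposition \ref{PropGDense} with the irrationality-measure estimates of Corollary \ref{PropIrrationalityMeasure}.

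I would first note that $\phi$ and $\phi^{-1}(y)=6^y/3$ are smooth diffeomorphisms with $\phi'(x)=1/(x\log 6)$ and $(\phi^{-1})'(y)=6^y\log 6/3$, both bounded away from $0$ and $\infty$ on their respective domains by constants depending only on $\log 6$. By the mean-value theorem, an $\epsilon$-dense subset of $\R/\Z$ pulls back under $\phi^{-1}$ to a (constant times $\epsilon$)-dense subset of $[1/3,2]$, with the constant depending only on $\log 6$. Hence it suffices to exhibit some $\epsilon$ proportional to $\ell$ (with constant of proportionality depending only on $\log 6$) such that the $k_\ell$-orbit of $R_\beta$ starting at $\phi(x)$ is $\epsilon$-dense in $\R/\Z$; translation invariance of the rotation removes any dependence on the base-point.

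For this I would invoke Proposition \ref{PropGDense} with $k=1$: $\beta\in\mathcal G^{a,b}$ implies $\{0,\beta,\ldots,(b-1)\beta\bmod 1\}$ is $1/a$-dense. Taking $a=\ceil{1/\epsilon}$ reduces matters to verifying $\beta\in\mathcal G^{a,k_\ell}$, i.e., $|\beta-p/q|>1/k_\ell$ for every $q\le a$. Corollary \ref{PropIrrationalityMeasure} provides exactly this: since $\eta(\beta)\le 8.616$, for every $\delta'>0$ and all sufficiently large $q$ we have $|\beta-p/q|\ge q^{-(8.616+\delta')}$, whence $k_\ell\ge a^{8.616+\delta'}$ is enough. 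Substituting $a$ yields a sufficient bound of the form $k_\ell\ge(C/\ell)^{8.616+\delta'}$ for an explicit constant $C$ involving $\log 6$; the multiplicative discrepancy between $C$ and the target constant $3/\log 6$ is absorbed into the exponent by replacing $\delta'$ by a slightly larger $\delta$, which is legitimate once $\ell$ is small enough that $(3/(\ell\log 6))^{\delta-\delta'}$ dominates the accumulated constant.

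The universal bound is entirely analogous but uses the unconditional inequality $|\beta-p/q|\ge(1/\log 6)q^{-14.3}$ for $q\ge 2$ from Corollary \ref{PropIrrationalityMeasure}; the leading $1/\log 6$ is what produces the trailing factor of $\log 6$ in $k_\ell\ge(3/(\ell\log 6))^{14.3}\log 6$, and the hypothesis $1/\ell\ge 2$ ensures that the denominators we must consider are at least $2$, where the universal bound is valid. The main difficulty throughout is purely bookkeeping: tracking the several $\log 6$-dependent constants from the conjugacy, Proposition \ref{PropGDense}, and Corollary \ref{PropIrrationalityMeasure} so that they combine to produce exactly the stated inequalities, and justifying the constant-to-exponent absorption in the asymptotic case. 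No dynamical ingredient beyond the reduction to rotation dynamics is needed.
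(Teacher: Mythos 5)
Your proposal is correct and follows essentially the same route as the paper: conjugate $f$ to the rotation by $\log 2/\log 6$ via $\phi$, control the distortion of density through the derivative bounds on $\phi$, and combine Proposition \ref{PropGDense} (in the $k=1$ case) with the irrationality-measure bounds of Corollary \ref{PropIrrationalityMeasure}, treating the asymptotic and universal estimates separately. The only cosmetic difference is that you absorb the conjugacy constant into the exponent via the slack $\delta$ rather than tracking $3/\log 6$ explicitly as the paper does, which is harmless for the asymptotic claim.
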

	\begin{proof}
		Let $\phi$ be the conjugacy from Proposition \ref{PropfAperiodic} 
		between $f$ and rotation by $\frac{\log 2}{\log 6}$.
		We have that $|\phi'|$ attains a maximum value of $\frac{3}{\log 6}$.  Thus, 
		to ensure an orbit segment under $f$ is $\ell$-dense, we must have that
		the image of an orbit segment
		under $\phi\circ f\circ \phi^{-1}=R_{\frac{\log 2}{\log 6}}$ 
		is $\frac{\ell\log 6}{3}$-dense.
		Let $\eta=\eta(\log 2/\log 6)$ be the irrationality measure of $\log2/\log 6$.
		Fix $\delta>0$.  We then have, by the definition
		of the irrationality measure, $\log2/\log 6\in \mathcal G^{k, k^{\eta+\delta}}$
		for all sufficiently large $k$.
		Applying Proposition \ref{PropGDense} and using 
		Corollary \ref{PropIrrationalityMeasure} to bound $\eta$ now completes the proof of the first claim.

		For the second claim, note that Corollary \ref{PropIrrationalityMeasure} implies that $\frac{\log 2}{\log 6}
		\in \mathcal G^{k,k^{14.3}\log 6}$ for any $k\geq 2$.  The proof then follows similarly.
	\end{proof}

	\begin{theorem*}[Theorem \ref{ThmKCBounds}]
		Let $\eta=\eta(\log 2/\log 6)$.  Every legal $n\times m$ configuration
		in $KC$ occurs in every $B\times A$ configuration in $KC$ where
		\[
			A = \left(\frac{324}{\log 6}6^{4m}n^{4}\right)^\eta
			< 6^{34.464m+25}n^{34.464}\qquad\text{and}\qquad
			B = 6^{5m+3}n^4
		\]
		for sufficiently large $m+n$.

		Further, for all $m,n$ we have that a copy of every legal $n\times m$ configuration
		in $KC$ occurs in every $B\times A$ configuration in $KC$ where
		\[
			A = \left(\frac{324}{\log 6}6^{4m}n^{4}\right)^{14.3} \log 6
			\qquad\text{and}\qquad
			B = 6^{5m+3}n^4
		\]
	\end{theorem*}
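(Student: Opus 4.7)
The idea is to combine the horizontal waiting time bound of Theorem \ref{PropHorizWaitingTime} with the $f$-orbit density bound of Proposition \ref{PropfWaitingTime}. Fix an $m\times n$ configuration $c$. Theorem \ref{PropHorizWaitingTime} produces an open interval $I_c \subset \mathcal{W}_{n\times m}$ of angles of width $2/(6^{4m+3}n^{4})$ with the property that, for \emph{every} $\alpha\in I_c$ and every $t\in\T$, the horizontal orbit $\{\hat T^j(\alpha,t):0\le j<6^{5m+3}n^{4}\}$ includes a point whose image under $K$ agrees with $c$ on $\{0,\ldots,m-1\}\times\{0,\ldots,n-1\}$. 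So once the angle of some row falls into $I_c$, a horizontal window of width $B=6^{5m+3}n^{4}$ starting at that row contains $c$.

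The next step is to force some row of an $A\times B$ rectangle to have angle in $I_c$. Since $\hat S$ acts on the first coordinate by $f$, the vertical sequence of row angles is an $f$-orbit $\alpha_0,f(\alpha_0),f^2(\alpha_0),\ldots$. By Proposition \ref{PropfWaitingTime}, applied with density threshold $\ell=2/(6^{4m+3}n^{4})$, this orbit is $\ell$-dense in $[1/3,2]$ (and hence meets the open interval $I_c$) once its length reaches
\[
A\;\ge\;\left(\tfrac{3}{\ell\log 6}\right)^{\eta+\delta}
\;=\;\left(\tfrac{324}{\log 6}\,6^{4m}n^{4}\right)^{\eta+\delta},
\]
where $\eta=\eta(\log 2/\log 6)$. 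Combining the two observations, any rectangle of size at least $A\times B$ in a point of $KC_{\Q^c}$ contains a copy of $c$.

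From here I would convert the implicit bound into the explicit form in the theorem statement. Using the estimate $\eta\le 8.616$ from Corollary \ref{PropIrrationalityMeasure}, one gets an exponent $4(\eta+\delta)\le 34.464$ on $6^m$ and $n$ respectively, provided $\delta$ is chosen small and $m+n$ is large enough that the asymptotic regime of Proposition \ref{PropfWaitingTime} applies; a short calculation $\log_6(324/\log 6)\cdot 8.616\approx 25$ absorbs the constant prefactor into $6^{25}$, yielding the asymptotic bound $A<6^{34.464m+25}n^{34.464}$. For the universal (non-asymptotic) bound, one substitutes the second estimate of Proposition \ref{PropfWaitingTime}, which replaces the exponent $\eta+\delta$ by $14.3$ and introduces a factor of $\log 6$, giving the stated closed-form $A=\left(\tfrac{324}{\log 6}6^{4m}n^{4}\right)^{14.3}\log 6$.

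Finally, to upgrade from $KC_{\Q^c}$ to all of $KC$, I would invoke the fact (established in the proof of Theorem \ref{ThmKCMiminal}) that the orbit of any point of $KC$ is dense in $KC_{\Q^c}$, together with the observation that ``contains a copy of $c$'' is an open condition on finite configurations. Thus the bound proved on $KC_{\Q^c}$-rectangles transfers to arbitrary $A\times B$ rectangles in $KC$. The main obstacle is not conceptual---every ingredient has been assembled---but bookkeeping: keeping the two independent quantifiers (over the starting parameter $(\alpha_0,t_0)$ and over the target configuration $c$) straight, and verifying that the constants come out as advertised, especially the interplay between the interval width $|I_c|$ and the density threshold $\ell$ fed into Proposition \ref{PropfWaitingTime}.
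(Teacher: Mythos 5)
Your proposal is correct and follows essentially the same route as the paper: fix the interval $I_c$ from Theorem \ref{PropHorizWaitingTime}, bound the horizontal waiting time by $6^{5m+3}n^4$, and then apply Proposition \ref{PropfWaitingTime} with $\ell=2/(6^{4m+3}n^4)$ to bound the number of $\hat S$-steps needed to land in $I_c\times\T$, yielding exactly the stated constants. The only difference is that you explicitly flag the passage from $KC_{\Q^c}$ to all of $KC$, which the paper's proof leaves implicit.
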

	\begin{proof}
		Let $C=\{0,\ldots,n-1\}\times \{0,\ldots, m-1\}$
		and let $c$ be a legal $n\times m$ configuration.
		Fix $I_c\subset \mathcal W_{n\times m}$ as in 
		Theorem \ref{PropHorizWaitingTime}.
		We now have that for any $(\alpha,t)\in I_c\times \T$,
		$K\circ \hat T^j(\alpha, t)|_C=c$ for some $j<6^{5m+3}n^4$.

		Since $I_c$ is of length at least $2/(6^{4m+3}n^4)$, by
		Proposition \ref{PropfWaitingTime} with $\ell = 2/(6^{4m+3}n^4)$,
		we see that for any $(\alpha, t)\in[1/3,2]\times \T$, we have
		$\hat S^j(\alpha, t)\in I_c\times\T$ for some $j<(3\cdot 6^{4m+3}n^4/(2\log 6))^\eta$.

		We now have a bound on how many applications of $\hat T$ and $\hat S$ it takes to 
		land in a particular element of $\mathcal P_{n,m}$, which gives bounds on $A$ and $B$.

		Alternatively, using the second part of proposition \ref{PropfWaitingTime} 
		with $\ell = 2/(6^{4m+3}n^4)$
		we get a bound for all $m\geq 2$.
	\end{proof}

  \begin{acknowledgements}
    I would like to thank the referee for a careful reading and
    useful suggestions.
  \end{acknowledgements}

\footnotesize
%
%
%
%
%

\bibliographystyle{abbrv}
\bibliography{JasonSiefkenKCMinimality}

\end{document}